\newcommandx{\at}[2][1=]{\todo[linecolor=red,backgroundcolor=red!25,bordercolor=red,#1]{#2}}
\newcommand{\rr}[1]{\textcolor{red}{#1}}
\newcommand{\ignore}[1]{}
\title{Twice is enough for dangerous eigenvalues\thanks{Submitted to the editors \today.
\funding{The work of the first author was partially supported by NSF DMS-1818757.}}}
\author{Andrew Horning\thanks{Center for Applied Mathematics, Cornell University, Ithaca, NY 14853. (\email{ajh326@cornell.edu})} \and Yuji Nakatsukasa \thanks{Mathematical Institute, University of Oxford, Oxford, OX2 6GG. (\email{nakatsukasa@maths.ox.ac.uk})}}
\begin{document}
\maketitle

\begin{abstract}
We analyze the stability of a class of eigensolvers that target interior eigenvalues with rational filters. We show that subspace iteration with a rational filter is robust even when an eigenvalue is near a filter's pole. These dangerous eigenvalues contribute to large round-off errors in the first iteration, but are self-correcting in later iterations. For matrices with orthogonal eigenvectors (e.g., real-symmetric or complex Hermitian), two iterations is enough to reduce round-off errors to the order of the unit-round off. In contrast, Krylov methods accelerated by rational filters with fixed poles typically fail to converge to unit round-off accuracy when an eigenvalue is close to a pole. In the context of Arnoldi with shift-and-invert enhancement, we demonstrate a simple restart strategy that recovers full precision in the target eigenpairs.
\end{abstract}

\begin{keywords}
subspace iteration, Arnoldi, shift-and-invert, rational filters, FEAST, CIRR
\end{keywords}

\begin{AMS}
65F15, 65G50, 15A18
\end{AMS}

\section{Introduction}\label{sec:intro}

When combined with shift-and-invert enhancement, subspace iteration and Arnoldi are two classic iterative schemes for computing a few interior eigenvalues of an $n\times n$ matrix $A$. Each method constructs an orthonormal basis for a search subspace by iteratively applying the spectral filter
\begin{equation}\label{eqn:shift-and-invert_filter}
s(A)=(zI-A)^{-1}
\end{equation}
to a set of vectors. Approximate eigenpairs can then be extracted from the search subspace with a projection step, e.g., Rayleigh--Ritz. The shift $z$ is selected to target a region of interest, and both methods typically approximate eigenvalues of $A$ closest to $z$.

Recently, general rational filters of the form
\begin{equation}\label{eqn:rational_filter}
r(A)=\sum_{j=1}^\ell \omega_j(z_jI-A)^{-1},
\end{equation}
have attracted a great deal of attention in the context of large, data-sparse eigenvalue problems~\cite{sakurai2003projection,kestyn2016pfeast,polizzi2009density,peter2014feast,guttel2015zolotarev,austin2015computing,horning2020feast}. When the weights $\omega_1,\ldots,\omega_\ell$ and nodes $z_1,\ldots,z_\ell$ are chosen appropriately, these rational filters can robustly target eigenvalues in a region of interest and significantly accelerate the convergence of the subspaces constructed by subspace iteration, Arnoldi, or variants thereof~\cite{peter2014feast,austin2015computing}. They also tend to be highly parallelizable because each shift-and-invert transformation may be applied independently~\cite{kestyn2016pfeast}.

In his 2001 volume on matrix algorithms for eigenvalue problems, Stewart noted that shift-and-invert Arnoldi encounters difficulties in floating-point arithmetic when the shift lies too close to an eigenvalue of $A$\cite[p.~309]{stewart2001matrix}. Although the eigenvalue adjacent to the shift is rapidly approximated to the order of the unit round-off $u$, the residuals of other computed eigenpairs stagnate near the order of $u/d$, where $d$ is the distance between the ``dangerous" eigenvalue and the shift. This phenomenon has also recently been observed in the context of Krylov methods, where the subspace is constructed with contour integrals and rational approximation~\cite{austin2015computing}.

Curiously, dangerous eigenvalues do not inflict the same stagnation in the residuals of the other target eigenpairs during subspace iteration.~\Cref{fig:arnoldi_fsi} compares the residuals of two target eigenpairs computed with Arnoldi (left) and subspace iteration (right), using the shift-and-invert filter in~\cref{eqn:shift-and-invert_filter} with $z=10$. The approximation to the dangerous eigenvalue $\lambda_1=10+10^{-12}$ converges rapidly to unit round-off accuracy in both cases. However, only subspace iteration computes an approximation to the second target eigenvalue $\lambda_2\approx 10.1$ to unit round-off accuracy.

A similar story unfolds in~\cref{fig:contour_refine}, where we compute two target eigenpairs with the contour integral eigensolver described in~\cite{peter2014feast}, one of them located at a distance of $10^{-10}$ from the contour. As we refine the quadrature along the contour, the poles of a rational filter with form~\cref{eqn:rational_filter} cluster near the dangerous eigenvalue, and we observe the residual of the dangerous eigenpair converge rapidly to unit round-off, while the residuals of the remaining target pairs stagnate near $10^{-5}$. On the other hand, if we fix the number of quadrature points (i.e., poles) and refine via filtered subspace iteration, the residuals of all target eigenpairs converge geometrically to order $u$.

This paper is about explaining~\cref{fig:arnoldi_fsi,fig:contour_refine}. We first examine how rational filtered subspace iteration disarms dangerous eigenvalues after the first iteration. When $A$ has a complete set of orthonormal eigenvectors, orthogonal bases for the search subspace play a special role and ``twice-is-enough" to recover full precision in the computed iterates (see~\cref{sec:dang_eigvals,sec:twice_is_enough,sec:PSI_analysis}).\footnote{Aspects of our analysis are similar to Parlett and Kahan's ``twice-is-enough" algorithm and analysis for Gram-Schmidt reorthogonalization~\cite[pp.~107--109]{parlett1998symmetric}.} In the non-normal case, iterating on approximate eigenvectors (obtained from a Rayleigh--Ritz step, for instance) is the key to overcoming round-off errors incurred by the dangerous eigenvalue, while iterations based on orthogonal bases (such as approximate Schur vectors) suffer stagnation in the remaining target eigenpairs (see~\cref{sec:non-normal_case}).

\begin{figure}[!tbp]
  \centering
  \begin{minipage}[b]{0.48\textwidth}
    \begin{overpic}[width=\textwidth]{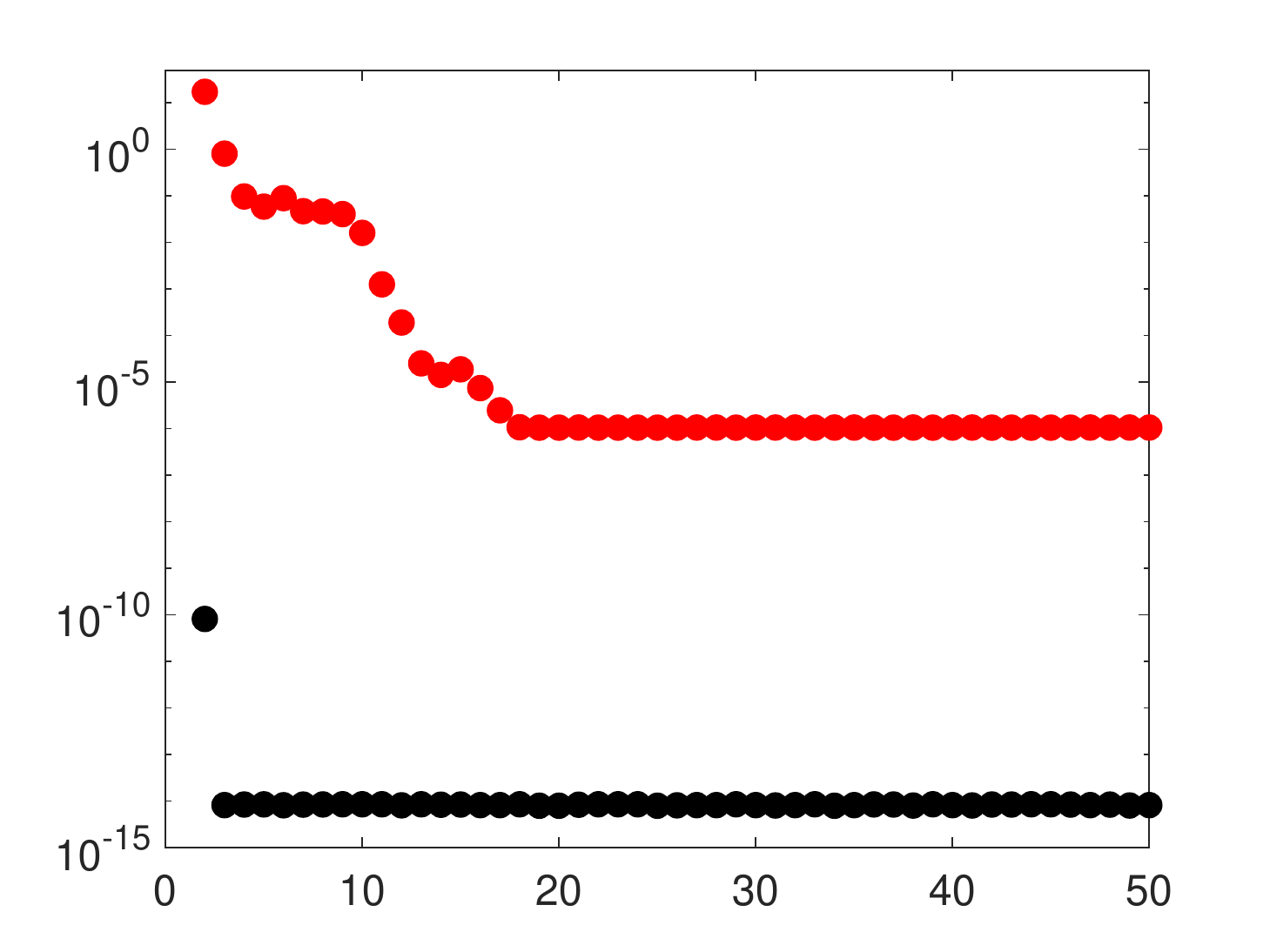}
     \put (34,73) {$\displaystyle \|A\hat v_i-\hat \lambda_i\hat v_i\|$}
     \put (75,14) {$\displaystyle i=1$}
   	 \put (75,44) {$\displaystyle i=2$}
     \put (50,-2) {$\displaystyle k$}
     \end{overpic}
  \end{minipage}
  \hfill
  \begin{minipage}[b]{0.48\textwidth}
    \begin{overpic}[width=\textwidth]{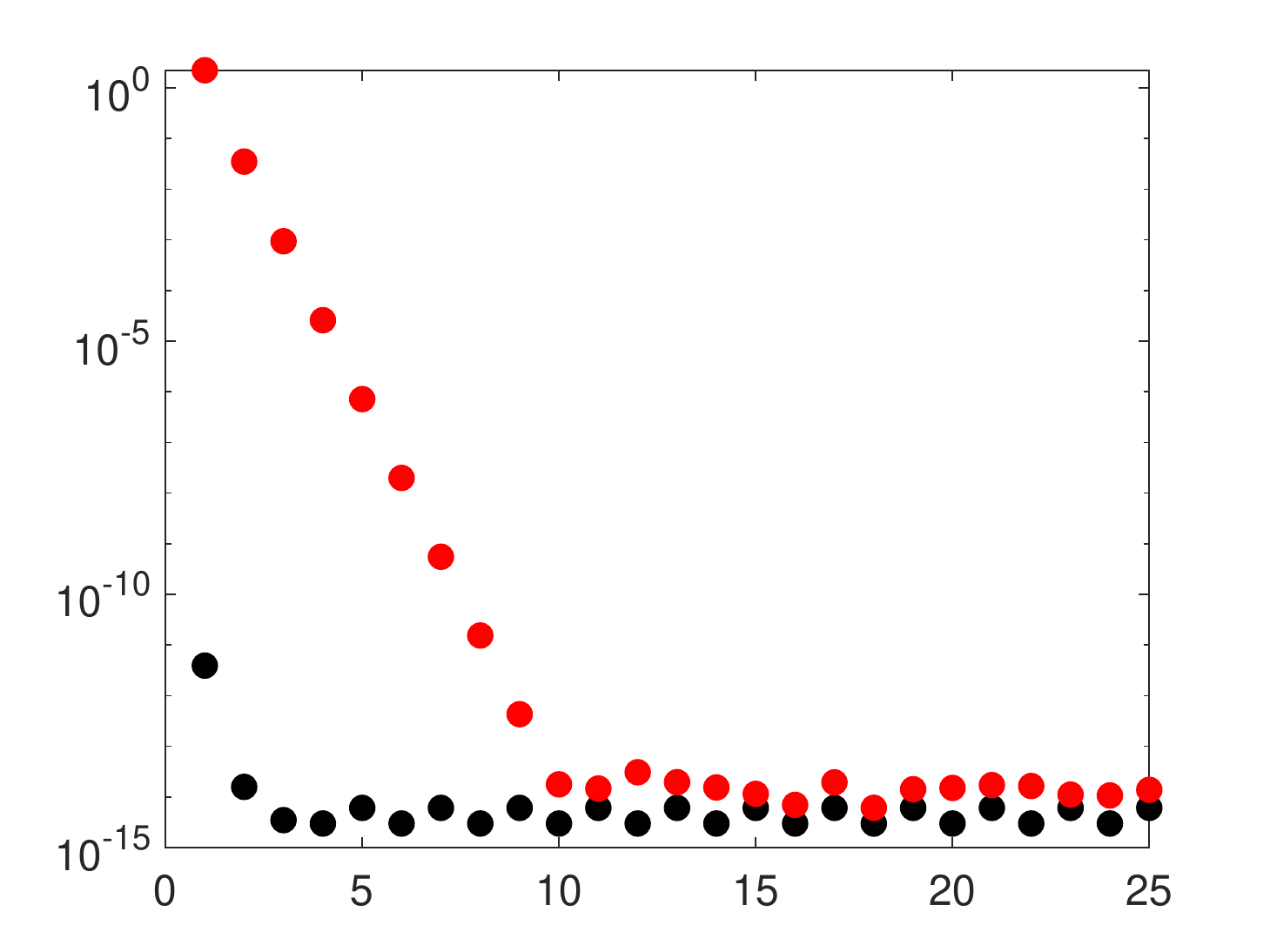}
     \put (34,73) {$\displaystyle \|A\hat v_i-\hat \lambda_i\hat v_i\|$}
     \put (22,14) {$\displaystyle i=1$}
   	 \put (25,55) {\rotatebox{-58}{$\displaystyle i=2$}}
     \put (50,-2) {$\displaystyle k$}
     \end{overpic}
  \end{minipage}
  \caption{
The residuals for two approximate eigenpairs of a real-symmetric $100\times 100$ matrix at iterations $k=2,\ldots,50$ of Arnoldi (left) and iterations $k=1,\ldots,25$ of subspace iteration (right), both with shift-and-invert enhancement. The approximate eigenpairs correspond to a dangerous eigenvalue (black) with $|z-\lambda_1|=10^{-12}$ and a second target eigenvalue (red) with $|z-\lambda_2|\approx 0.1$.
\label{fig:arnoldi_fsi}}
\end{figure}

To obtain full precision in the remaining target eigenpairs for Arnoldi and related Krylov schemes, the prevailing consensus is to alter the rational filter by moving or removing the offending poles~\cite{stewart2001matrix,austin2015computing}. Unfortunately, this usually means settling for a less efficient filter or starting over with a new filter. Informed by our analysis of subspace iteration and its immunity to dangerous eigenvalues, we offer simple restart strategies that fix stagnation in shift-and-invert Arnoldi (see~\cref{sec:rat_Krylov}).

\begin{figure}[!tbp]
  \centering
  \begin{minipage}[b]{0.48\textwidth}
    \begin{overpic}[width=\textwidth]{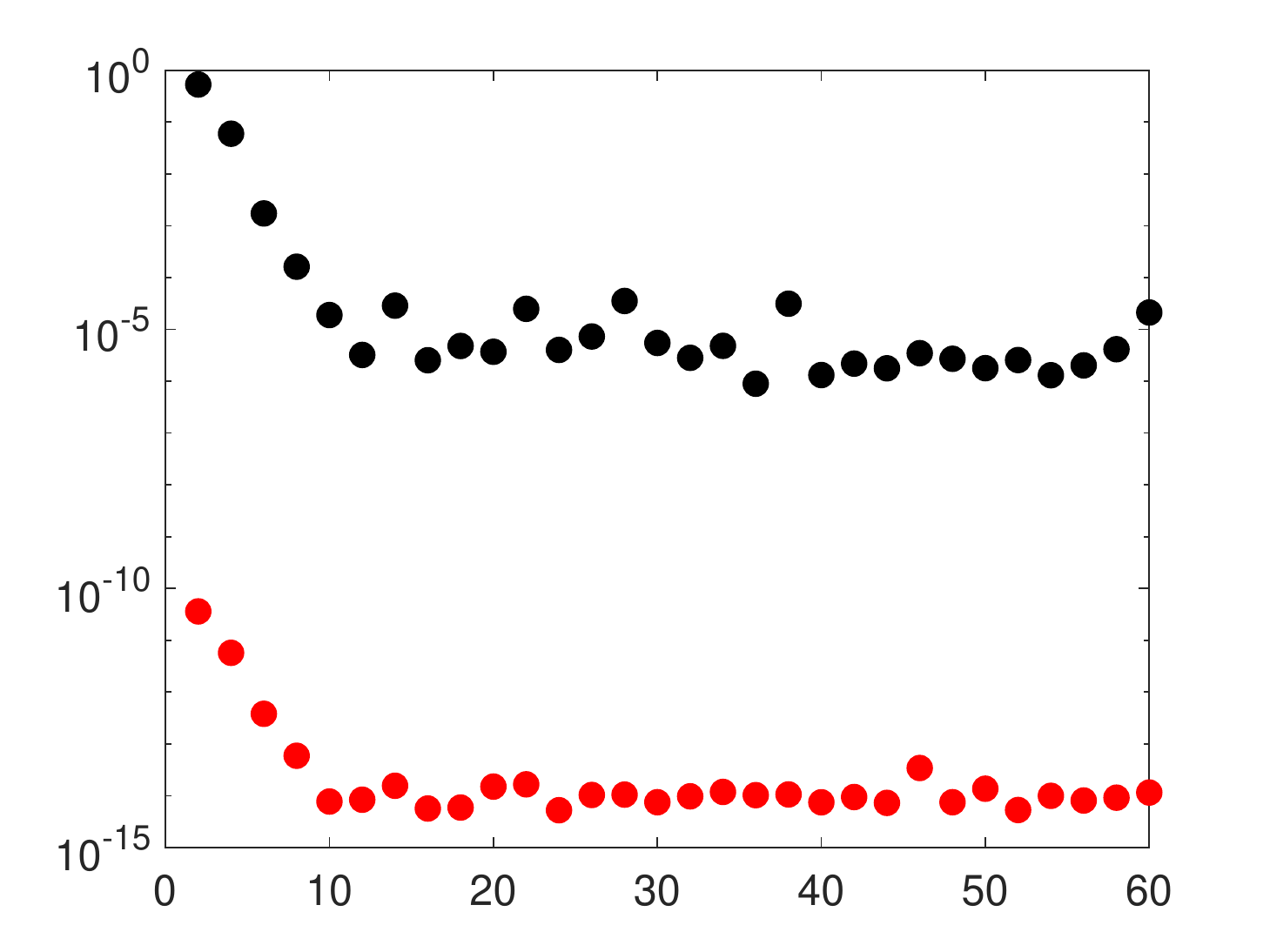}
     \put (34,73) {$\displaystyle \|A\hat v_i-\hat \lambda_i\hat v_i\|$}
     \put (75,16) {$\displaystyle i=1$}
   	 \put (75,50) {$\displaystyle i=2$}
     \put (50,-2) {$\displaystyle \ell$}
     \end{overpic}
  \end{minipage}
  \hfill
  \begin{minipage}[b]{0.48\textwidth}
    \begin{overpic}[width=\textwidth]{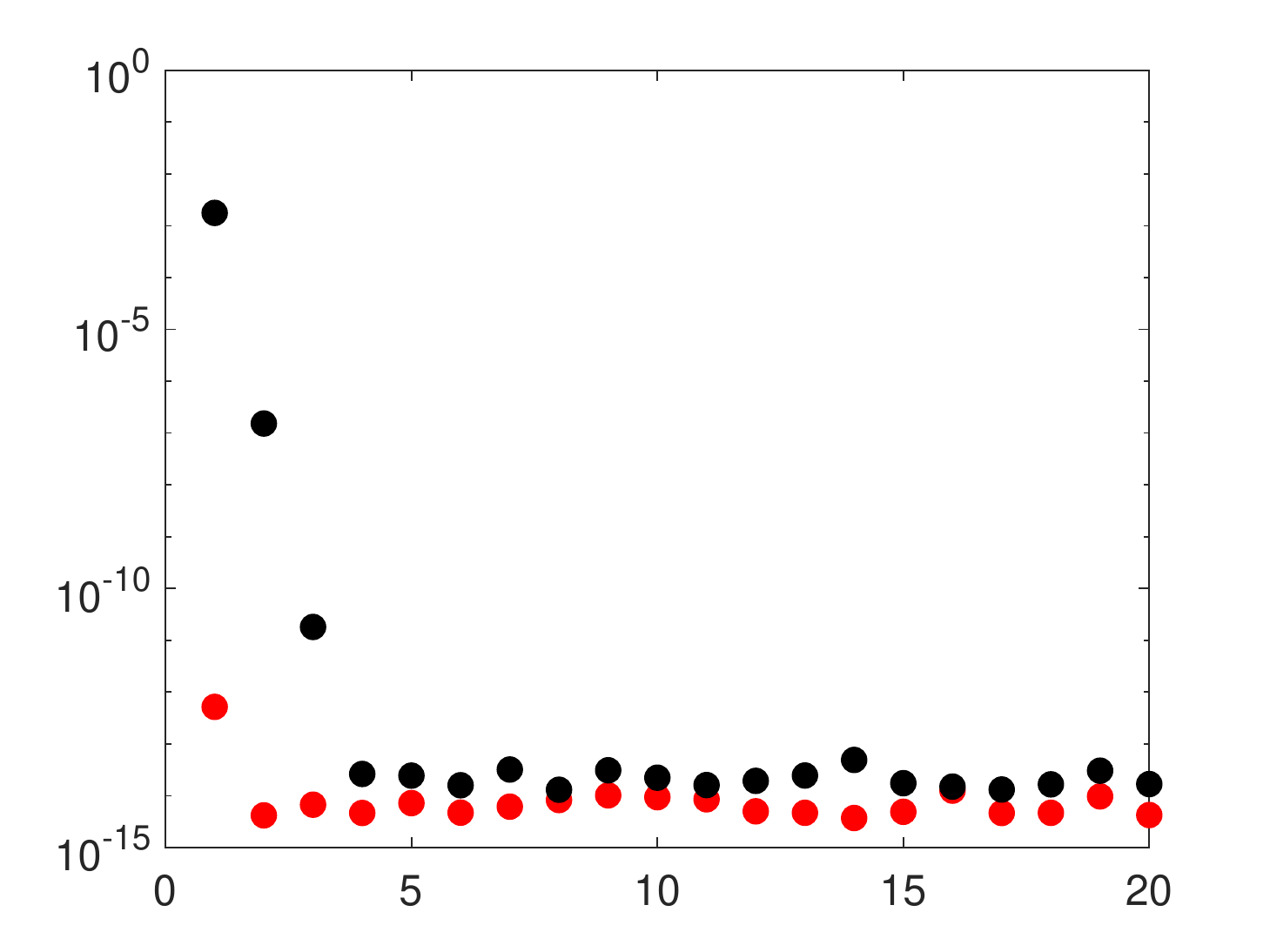}
     \put (34,73) {$\displaystyle \|A\hat v_i-\hat \lambda_i\hat v_i\|$}
     \put (16,25) {\rotatebox{-60}{$\displaystyle i=1$}}
   	 \put (23,41) {\rotatebox{-66}{$\displaystyle i=2$}}
     \put (50,-2) {$\displaystyle k$}
     \end{overpic}
  \end{minipage}
  \caption{The left panel displays the residuals for two approximate eigenpairs of a $100\times 100$ real-symmetric matrix computed with the contour integral eigensolver described in~\cite{peter2014feast}, as the quadrature rule approximating the contour integral is refined. One of the target eigenvalues ($i=1$, black) is a distance of $10^{-10}$ from the contour. The right panel displays the residuals for the two approximate eigenpairs when refined via iteration rather than quadrature rule. The quadrature rule used corresponds to a rational filter in~\cref{eqn:rational_filter} with $\ell=8$.
\label{fig:contour_refine}}
\end{figure}

Our analysis is focused on a matrix $A$ with a single dangerous eigenvalue located at a distance $d\ll 1$ from a pole of the filter in~\cref{eqn:rational_filter}. To reveal the precise influence of the dangerous eigenvalue, we frame our discussion in the asymptotic limit $d\rightarrow 0$. However, we always provide concrete bounds and give leading order estimates to elucidate the role of salient parameters, e.g., related to the rational filter or non-normality of $A$. We consider the implications of our results for other natural configurations, such as multiple eigenvalues clustered at a pole, in~\cref{sec:num_exp}.

Throughout the paper, $\|\cdot\|$ denotes the spectral norm of a matrix (Euclidean norm for vectors) and $A$ denotes an $n\times n$ diagonalizable matrix with eigenvalues and eigenvectors satisfying $Av_i=\lambda_iv_i$, for $1\leq i\leq n$. Except in~\cref{sec:non-normal_case}, we assume that $A$ has a complete orthonormal set of eigenvectors (i.e., $A$ is normal), in which case it is convenient to write the eigendecomposition of $A$ in the form
\begin{equation}\label{eqn:eigendecomposition}
A=V_1\Lambda_1V_1^* + V_2\Lambda_2V_2^*.
\end{equation}
Here, $\Lambda_1={\rm diag}(\lambda_1,\ldots,\lambda_m)$ contains a set of target eigenvalues that we wish to compute and $\Lambda_2={\rm diag}(\lambda_{m+1},\ldots,\lambda_n)$ contains the remaining unwanted eigenvalues (usually, $m\ll n$). We denote the target eigenspace by $\mathcal{V}={\rm span}(V_1)$ and the full spectrum of $A$ by $\Lambda=\Lambda_1\bigcup\Lambda_2$. 

For simplicity, we always assume that $r(\Lambda)$ is invertible, that there is a nonzero spectral gap between $r(\Lambda_1)$ and $r(\Lambda_2)$, and index the eigenvalues in order of decreasing modulus under the filter so that
\begin{equation}\label{eqn:eig_index}
|r(\lambda_1)|\geq\cdots\geq|r(\lambda_m)| > |r(\lambda_{m+1})| \geq \cdots \geq |r(\lambda_n)|.
\end{equation}
Here, $r(\lambda)=\sum_{j=1}^\ell \omega_j(z_j-\lambda)^{-1}$ is the scalar form of the filter in~\cref{eqn:rational_filter}.\footnote{We refer to the scalar function $r(z)$ and its matrix companion $r(A)$ with the same symbol. We always include the argument when it is necessary to clarify which we mean.} Under the ordering in~\eqref{eqn:eig_index}, the dangerous eigenvalue is $\lambda_1$. Without loss of generality, we assume that the weight $w_j$ associated with a pole near the dangerous eigenvalue $\lambda_1$ is equal to one (by scaling $r(\cdot)$ if necessary). This simplifies the analysis and usually implies that the other weights $w_i$ are also modest in size. Finally, we tacitly assume $\|A\|=\mathcal{O}(1)$ in our informal discussions; the formal theorems and statements hold without this assumption.

\section{Subspace iteration with rational filters}\label{sec:fsi}

Given an $n\times m$ matrix $Q_0$ with orthonormal columns, the simplest practical form of subspace iteration with a rational filter, as in~\cref{eqn:rational_filter}, computes the iterates
\begin{equation}\label{eqn:ratSI}
X_k=r(A)Q_{k-1},\qquad Q_k={\rm qf}(X_k).
\end{equation}
Here, ${\rm qf}(X_k)$ denotes the orthogonal factor from a QR decomposition of $X_k$. The eigenvalues of $Q_k^*AQ_k$ provide approximations to the target eigenvalues, and approximate eigenvectors are given by $Q_kx_i$ for each eigenvector, $x_i$, of the small $m\times m$ matrix $Q_k^*AQ_k$. These approximations to the target eigenpairs are called Ritz pairs.

Intuitively, the Ritz pairs extracted with the basis $Q_k$ are usually good approximations to the target eigenpairs when there are good approximations to $v_1,\ldots,v_m$ in $\mathcal{S}_k={\rm span}(Q_k)$. Here, the rational filter in~\cref{eqn:ratSI} fills two complementary roles. First, the filter should guide the iterates toward the target eigenspace by mapping the target eigenvalues of $A$ to the dominant eigenvalues of $r(A)$ (that is, the eigenvalues with the largest modulus $|r(\lambda_i)|$). Second, the filter should enhance the gap between the target eigenvalues and the unwanted eigenvalues to accelerate the convergence of the Ritz pairs. These criteria follow from a standard one-step refinement bound for subspace iteration~\cite[Thm. 5.2]{saad2011numerical}.
\begin{theorem}\label{thm:1step_standard}
Let normal $A\in\mathbb{C}^{n\times n}$ and $r:\Lambda\rightarrow\mathbb{C}$ satisfy~\cref{eqn:eigendecomposition,eqn:eig_index}, respectively, and let $\mathcal{S}_{j}={\rm span}(Q_{j})$ in~\cref{eqn:ratSI}, for $j\geq 0$. If $V_1^*Q_0$ has full rank, then for each $v_i\in\mathcal{V}$ 
there are vectors $s_i^{(j)}\in\mathcal{S}_{j}$ such that 
\begin{equation}\label{eqn:1step_standard}
\|s_i^{(k)}-v_i\|\leq\Big\lvert\frac{r(\lambda_{m+1})}{r(\lambda_i)}\Big\rvert \|s_i^{(k-1)}-v_i\|\leq \Big\lvert\frac{r(\lambda_{m+1})}{r(\lambda_i)}\Big\rvert^k \|s_i^{(0)}-v_i\|.
\end{equation}
Moreover, each $P_\mathcal{V}s_i^{(j)}=v_i$, where $P_\mathcal{V}=V_1V_1^*$ is the spectral projector onto $\mathcal{V}$.
\end{theorem}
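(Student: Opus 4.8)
The plan is to pass through the filtered matrix $r(A)$ and apply the standard theory of subspace (orthogonal) iteration for the eigenvalue ordering it induces. Since $A$ is normal, $r(A)$ is normal with the \emph{same} eigenvectors $v_1,\dots,v_n$ and eigenvalues $r(\lambda_1),\dots,r(\lambda_n)$; by the indexing assumption~\cref{eqn:eig_index}, the target eigenvectors $v_1,\dots,v_m$ span the dominant invariant subspace of $r(A)$ associated with $r(\lambda_1),\dots,r(\lambda_m)$, and there is a genuine gap $|r(\lambda_m)| > |r(\lambda_{m+1})|$. The iteration~\cref{eqn:ratSI} is then exactly orthogonal iteration for $r(A)$ on the starting basis $Q_0$, so I would invoke the one-step refinement estimate for orthogonal iteration (the cited \cite[Thm.~5.2]{saad2011numerical}) applied to $r(A)$, which directly yields a contraction factor $|r(\lambda_{m+1})/r(\lambda_i)|$ per step toward each $v_i$.

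The one structural point that needs care is the construction of the vectors $s_i^{(j)}\in\mathcal S_j$ and the normalization $P_{\mathcal V} s_i^{(j)} = v_i$, which is sharper than a generic ``distance to the subspace'' bound. The plan is to build $s_i^{(j)}$ explicitly. Write $Y_j = r(A)^j Q_0$, so $\mathcal S_j = \mathrm{span}(Q_j) = \mathrm{span}(Y_j)$ (the QR factors only change basis, not the span, and $r(A)$ invertibility plus full rank of $V_1^*Q_0$ keeps these spans $m$-dimensional with $V_1^* $ restricted to them of full rank). For the base case, since $V_1^*Q_0$ has full rank there is a unique $c_i$ with $s_i^{(0)} := Q_0 c_i$ satisfying $V_1^* s_i^{(0)} = e_i$, i.e. $P_{\mathcal V}s_i^{(0)} = v_i$; decompose $s_i^{(0)} = v_i + w_i^{(0)}$ with $w_i^{(0)} = P_{\mathcal V}^\perp s_i^{(0)}$ lying in $\mathrm{span}(V_2)$. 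Then define $s_i^{(j)}$ by applying $r(A)$ and renormalizing the $\mathcal V$-component: since $r(A) v_i = r(\lambda_i) v_i$ and $r(A)$ preserves $\mathrm{span}(V_2)$, set $s_i^{(j)} := r(\lambda_i)^{-1} r(A) s_i^{(j-1)} = v_i + r(\lambda_i)^{-1} r(A) w_i^{(j-1)}$, which lies in $\mathcal S_j$ and satisfies $P_{\mathcal V}s_i^{(j)} = v_i$ by construction. The error is $w_i^{(j)} = r(\lambda_i)^{-1} r(A) w_i^{(j-1)} \in \mathrm{span}(V_2)$, so $\|w_i^{(j)}\| = \|r(\lambda_i)^{-1} r(A)|_{\mathrm{span}(V_2)} w_i^{(j-1)}\| \le |r(\lambda_{m+1})/r(\lambda_i)|\,\|w_i^{(j-1)}\|$ because the eigenvalues of $r(A)$ on $\mathrm{span}(V_2)$ are $r(\lambda_{m+1}),\dots,r(\lambda_n)$, each of modulus $\le |r(\lambda_{m+1})|$, and $r(A)$ restricted there is normal so its spectral norm is the max modulus. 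Iterating the inequality gives~\cref{eqn:1step_standard}, and $\|s_i^{(j)} - v_i\| = \|w_i^{(j)}\|$ closes the argument.

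The main obstacle, such as it is, is bookkeeping rather than depth: verifying that $s_i^{(j)}$ as defined actually lies in $\mathcal S_j = \mathrm{span}(Q_j)$ — this uses that $\mathrm{span}(Q_j) = r(A)^j\,\mathrm{span}(Q_0)$, which follows inductively from $Q_k = \mathrm{qf}(r(A) Q_{k-1})$ together with the non-degeneracy assumptions (invertibility of $r(\Lambda)$ and full rank of $V_1^*Q_0$, propagated forward) so that no rank is lost and each $X_k$ has a full-rank QR factor. Once the spans are pinned down, everything else is the normal-operator spectral bound applied on $\mathrm{span}(V_2)$. I would present the explicit $s_i^{(j)}$ construction first, then the one-line norm estimate, noting at the end that this recovers \cite[Thm.~5.2]{saad2011numerical} specialized to $r(A)$.
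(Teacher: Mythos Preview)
Your proposal is correct and matches the paper's approach. The paper cites~\cref{thm:1step_standard} as a standard result and only sketches a proof after~\cref{thm:standardSI_bound}, noting that it is recovered from~\cref{eqn:compute_tangent} by post-multiplying by $e_i$ and setting $s_i^{(k)}=X_k(V_1^*X_k)^+e_i$; your recursive definition $s_i^{(j)}=r(\lambda_i)^{-1}r(A)s_i^{(j-1)}$ starting from $s_i^{(0)}=Q_0(V_1^*Q_0)^{-1}e_i$ produces exactly the same vectors, and your spectral-norm estimate on $r(A)|_{\mathrm{span}(V_2)}$ is the same mechanism as the paper's bound $\|r(\Lambda_2)\|=|r(\lambda_{m+1})|$.
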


\Cref{thm:1step_standard} implies that there are approximations in $\mathcal{S}_k$ that converge geometrically to the $i$th target eigenvector with rate $|r(\lambda_{m+1})|/|r(\lambda_i)|$.\footnote{If $A$ does not possess orthogonal eigenvectors, this rate is only asymptotic as $k\rightarrow\infty$ due to the phenomena of transient growth in matrix powers of non-normal matrices~\cite[Ch. 16]{trefethen2005spectra}.} Consequently, if the filter is very small on the unwanted eigenvalues relative to its magnitude on the target eigenvalues, then we expect the Ritz pairs to converge rapidly. The appeal of rational filters in the modern computing era is that filters of modest degree $\ell\leq 20$ often achieve $|r(\lambda_{m+1})|/|r(\lambda_m)|\approx u$. In a typical parallel computing environment, the individual shifted inverses in~\cref{eqn:rational_filter} are easily applied in parallel, meaning that the target eigenpairs can be computed to machine precision at the equivalent (serial) cost of solving a shifted linear system. However, a higher degree rational filter and multiple iterations may be required when many eigenvalues are clustered near the target group. Additionally, clustered eigenvalues may lead to ill-conditioned eigenvectors and loss of orthogonality in the Ritz pairs. When eigenvalues are clustered and more poles are employed in the rational filter, one may also encounter dangerous eigenvalues.

In practice, there are many modifications one can make to~\cref{eqn:ratSI} to improve convergence, enhance stability, or increase computational efficiency. Nevertheless, when $A$ is normal,~\cref{eqn:ratSI} is enough to capture both the dangers and the self-correcting effects of eigenvalues that are close to the poles in~\cref{eqn:rational_filter}. When $A$ is non-normal, iterations that incorporate the Ritz vectors when forming $Q_{k-1}$ play a special role, while other variants (including~\cref{eqn:ratSI} itself) typically fail to converge to full precision (see~\cref{fig:exp3_results}). We discuss these modifications further in~\cref{sec:non-normal_case}.

\subsection{Principal angles between subspaces}

The principal angles between the subspaces $\mathcal{S}_k$ and $\mathcal{V}$ provide a natural framework with which to characterize the refinement of the iterates in~\cref{eqn:ratSI}. Generalizing the notion of an angle between two vectors, the principal angles tell us how close $\mathcal{S}_k$ and $\mathcal{V}$ are in a geometric sense~\cite{bjorck1973numerical}. 
\begin{definition}\label{def:PABS}
Let $\mathcal{X}$ and $\mathcal{Y}$ be two $m$-dimensional subspaces with orthonormal bases $X$ and $Y$, respectively, and let $\sigma_i(Y^*X)$ denote the $i$th singular value of $Y^*X$. The principal angles between $\mathcal{X}$ and $\mathcal{Y}$ are the acute angles $\theta_1(\mathcal{X},\mathcal{Y})\geq\cdots\geq\theta_m(\mathcal{X},\mathcal{Y})$ satisfying
\begin{equation}\label{eqn:define_PABS}
\cos\theta_i(\mathcal{X},\mathcal{Y})=\sigma_{m+1-i}(Y^*X), \qquad i=1,\ldots,m.
\end{equation}
\end{definition}
The sine of the largest principal angle, given by $\sin\theta_1(\mathcal{X},\mathcal{Y})=\|(I-P_\mathcal{Y})X\|$, defines a metric on the set of $m$-dimensional subspaces. However, the tangents of the principal angles, which are the singular values of the matrix~\cite{zhu2013angles}
\begin{equation}\label{eqn:tan_PABS}
T(X,Y)=(I-P_\mathcal{Y})X(Y^*X)^+,
\end{equation} 
are better equipped to describe the behavior of the iterates in~\cref{eqn:ratSI}. In~\cref{eqn:tan_PABS}, $(Y^*X)^+$ denotes the Moore--Penrose pseudoinverse of $Y^*X$ and, crucially, $X$ need not be orthonormal.

A subspace analogue of~\cref{thm:1step_standard}, based on the largest principal angle between $\mathcal{S}_k$ and $\mathcal{V}$, is easy to derive with~\cref{eqn:tan_PABS}. 
\begin{theorem}\label{thm:standardSI_bound}
Let normal $A\in\mathbb{C}^{n\times n}$ and $r:\Lambda\rightarrow\mathbb{C}$ satisfy~\cref{eqn:eigendecomposition,eqn:eig_index}, respectively, 
and let $\mathcal{S}_{j}={\rm span}(Q_{j})$ in~\cref{eqn:ratSI}.
If $\cos\theta_1(\mathcal{S}_{0},\mathcal{V})>0$, then
\begin{equation}
\tan\theta_1(\mathcal{S}_k,\mathcal{V})\leq\Big\lvert\frac{r(\lambda_{m+1})}{r(\lambda_m)}\Big\rvert\tan\theta_1(\mathcal{S}_{k-1},\mathcal{V})
\leq \Big\lvert\frac{r(\lambda_{m+1})}{r(\lambda_m)}\Big\rvert^k\tan\theta_1(\mathcal{S}_{0},\mathcal{V}).
\end{equation}
\end{theorem}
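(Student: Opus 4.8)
The plan is to bypass $Q_k$ entirely and work with the non-orthonormal iterate $X_k$ directly, exploiting the fact (from~\cite{zhu2013angles}, as recorded around~\cref{eqn:tan_PABS}) that the singular values of $T(X,Y)$ are the tangents of the principal angles between $\operatorname{span}(X)$ and $\operatorname{span}(Y)$ even when $X$ is not orthonormal. Since the QR step in~\cref{eqn:ratSI} does not change the column space, $\mathcal{S}_k=\operatorname{span}(X_k)$, and because $r(\Lambda)$ — hence $r(A)=[V_1\ V_2]\,r(\Lambda)\,[V_1\ V_2]^*$ — is invertible by assumption, $X_k=r(A)Q_{k-1}$ has full column rank $m$, so the iteration is well defined and $\mathcal{S}_k=r(A)\mathcal{S}_{k-1}$. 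I would run a one-step induction on $k$ whose inductive claim is ``$\cos\theta_1(\mathcal{S}_k,\mathcal{V})>0$ and the displayed bound holds'', the base case being the hypothesis $\cos\theta_1(\mathcal{S}_0,\mathcal{V})>0$.

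For the inductive step, expand $Q_{k-1}$ in the unitary eigenbasis of the normal matrix $A$ as $Q_{k-1}=V_1C_{k-1}+V_2S_{k-1}$ with $C_{k-1}=V_1^*Q_{k-1}$ and $S_{k-1}=V_2^*Q_{k-1}$; the hypothesis $\cos\theta_1(\mathcal{S}_{k-1},\mathcal{V})>0$ makes the square block $C_{k-1}$ invertible. Using $r(A)=V_1r(\Lambda_1)V_1^*+V_2r(\Lambda_2)V_2^*$,
\[
X_k=r(A)Q_{k-1}=V_1\,r(\Lambda_1)C_{k-1}+V_2\,r(\Lambda_2)S_{k-1},
\]
so $V_1^*X_k=r(\Lambda_1)C_{k-1}$ (invertible, since $r(\Lambda_1)$ is a nonsingular block of $r(\Lambda)$) and $(I-P_\mathcal{V})X_k=V_2\,r(\Lambda_2)S_{k-1}$. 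Substituting into~\cref{eqn:tan_PABS} and using $(r(\Lambda_1)C_{k-1})^+=C_{k-1}^{-1}r(\Lambda_1)^{-1}$ gives
\[
T(X_k,V_1)=V_2\,r(\Lambda_2)\,S_{k-1}C_{k-1}^{-1}\,r(\Lambda_1)^{-1}.
\]

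To close, note that $T(Q_{k-1},V_1)=V_2S_{k-1}C_{k-1}^{-1}$ with $Q_{k-1}$ orthonormal, so $\|V_2S_{k-1}C_{k-1}^{-1}\|=\tan\theta_1(\mathcal{S}_{k-1},\mathcal{V})$. Taking spectral norms, submultiplicativity together with the ordering~\cref{eqn:eig_index} (which gives $\|r(\Lambda_2)\|=|r(\lambda_{m+1})|$ and $\|r(\Lambda_1)^{-1}\|=|r(\lambda_m)|^{-1}$) yields
\[
\tan\theta_1(\mathcal{S}_k,\mathcal{V})=\|T(X_k,V_1)\|\leq\Big\lvert\frac{r(\lambda_{m+1})}{r(\lambda_m)}\Big\rvert\,\tan\theta_1(\mathcal{S}_{k-1},\mathcal{V}).
\]
Since the right-hand side is finite, $\cos\theta_1(\mathcal{S}_k,\mathcal{V})>0$, completing the induction; iterating the one-step estimate gives the $k$-th power bound. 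There is no real analytic obstacle here — the only points needing care are the pseudoinverse bookkeeping, which collapses to ordinary inverses once the induction guarantees $C_{k-1}$ is nonsingular, and tracking that the largest principal angle stays acute along the iteration so that the factorizations remain valid.
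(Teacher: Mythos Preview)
Your argument is correct and essentially identical to the paper's: both compute $T(X_k,V_1)=(I-P_\mathcal{V})X_k(V_1^*X_k)^+$ directly from the spectral decomposition of $r(A)$, identify $V_2^*Q_{k-1}(V_1^*Q_{k-1})^{-1}$ as the tangent matrix for $\mathcal{S}_{k-1}$, take norms, and close by induction using $\tan\theta<\infty\Rightarrow\cos\theta>0$. Your write-up is slightly more explicit about the induction scaffolding and the pseudoinverse collapsing to an ordinary inverse, but there is no substantive difference in method.
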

\begin{proof}
We prove the first inequality with a direct calculation using~\cref{eqn:tan_PABS}; the second follows immediately by induction and the fact that $\cos\theta>0$ when $\tan\theta<\infty$. We compute that 
$(I-P_\mathcal{V})X_k = V_2r(\Lambda_2)V_2^*Q_{k-1}$ and that $V_1^*X_k=r(\Lambda_1)V_1^*Q_{k-1}$. Using the induction hypothesis that $\cos\theta_1(\mathcal{S}_{k-1},\mathcal{V})>0$, which implies $V_1^*Q_{k-1}$ is invertible, we obtain
\begin{equation}\label{eqn:compute_tangent}
(I-P_\mathcal{V})X_k(V_1^*X_k)^+=V_2r(\Lambda_2)V_2^*Q_{k-1}(V_1^*Q_{k-1})^{-1}r(\Lambda_1)^{-1}.
\end{equation}
The theorem follows by taking norms and noting that $\|V_2^*Q_{k-1}(V_1^*Q_{k-1})^{-1}\|=\tan\theta_1(\mathcal{S}_{k-1},\mathcal{V})$, $\|r(\Lambda_2)\|=|r(\lambda_{m+1})|$, and $\|r(\Lambda_1)^{-1}\|=|r(\lambda_m)|^{-1}$.
\end{proof}
We note that~\cref{thm:1step_standard} is recovered from~\cref{eqn:compute_tangent} by post-multiplying each side by the unit vector $e_i$ and setting $s_j=X_k(V_1^*X_k)^+e_i$, for $j=k-1,k$.

The tangents (and sines) of the principal angles play an important role in the perturbation theory of eigenpairs and, consequently, the bounds in~\cref{thm:standardSI_bound} are useful when determining the accuracy in the computed Ritz pairs~\cite{stewart1990matrix,stewart2001matrix,saad2011numerical}. For our purposes,~\cref{thm:standardSI_bound} and its proof are useful tools when analyzing subspace iterations subject to perturbations (see~\cref{sec:PSI_analysis}), because $\tan\theta_1(\mathcal{S}_k,\mathcal{V})$ is computed directly from the iterate $X_k$.

\section{Dangerous eigenvalues}\label{sec:dang_eigvals}

When an eigenvalue of $A$ is very close to a pole of the rational filter in~\cref{eqn:rational_filter}, $r(A)$ disproportionately amplifies components in the direction of the associated eigenvector. Given any vector $x\in\mathbb{C}^n$, we estimate
\begin{equation}\label{eqn:amplify_v1}
r(A)x = \sum_{i=1}^n r(\lambda_i)v_iv_i^*x = \frac{v_1^*x}{de^{i\theta}}v_1
 + \mathcal{O}(1), \quad\text{as}\quad d\rightarrow 0.
\end{equation}
(It is convenient to write the complex-valued difference between $\lambda_1$ and the nearest pole $z_{j_*}$ in the polar notation $z_{j_*}-\lambda_1=de^{i\theta}$, with argument $0\leq\theta<2\pi$.)
This amplification is precisely the reason that shift-and-invert power iterations are so effective when the shift is close to the target eigenvalue. If we apply $r(A)$ to a random vector with unit norm and normalize, the result approximates $v_1$ with relative accuracy $\mathcal{O}(d)$, under the generic assumption that the random vector is not nearly orthogonal to $v_1$. Similarly, when $r(A)$ is applied to a random orthonormal matrix $Q_0$, ${\rm span}(r(A)Q_0)$ contains good approximations to $v_1$ when $\|v_1^*Q_0\|$ is not too small.

\begin{figure}[!tbp]
  \centering
  \begin{minipage}[b]{0.98\textwidth}
    \begin{overpic}[width=\textwidth]{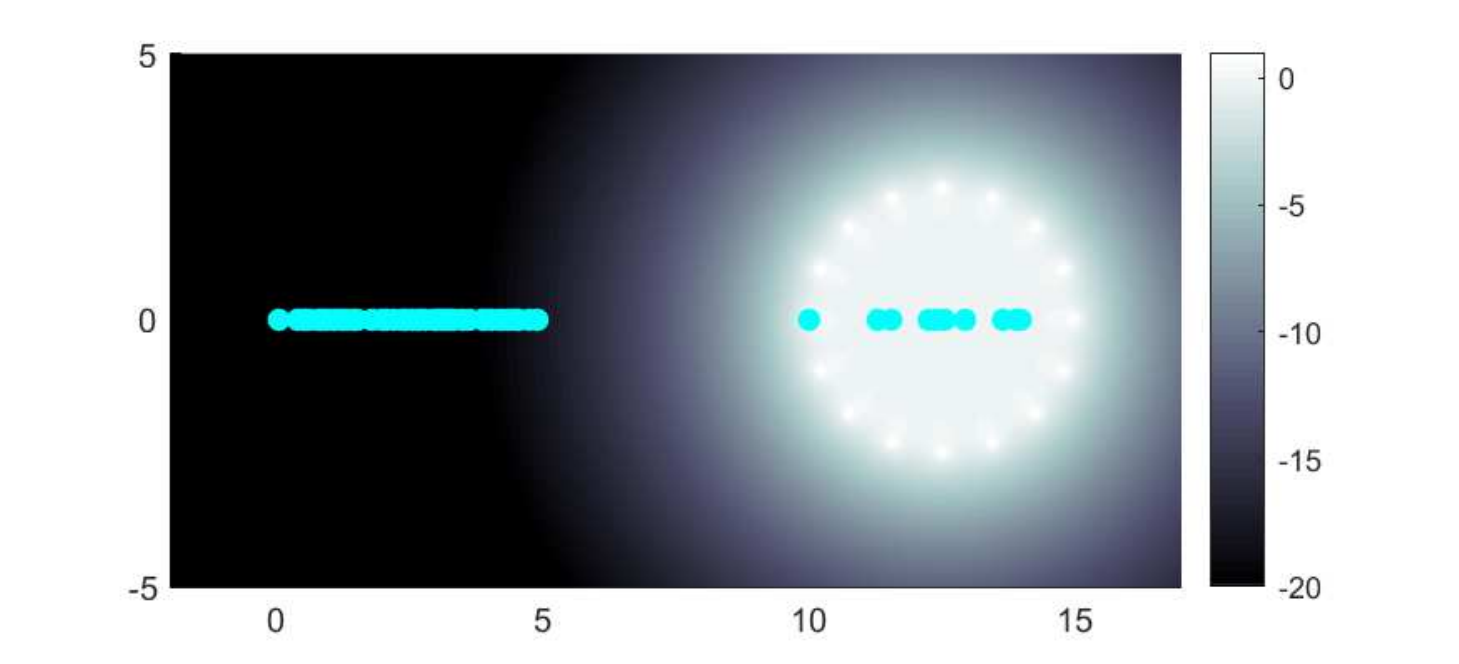}
   	 \put (5,20) {\rotatebox{90}{$\displaystyle {\rm Imag}(z)$}}
   	 \put (92,28) {\rotatebox{-90}{$\displaystyle \log|r(z)|$}}
     \put (44,0) {$\displaystyle {\rm Real}(z)$}
     \end{overpic}
  \end{minipage}
  \caption{The eigenvalues of a $100\times 100$ real-symmetric matrix overlaid on a complex color plot of the magnitude of a rational approximation to the characteristic function on $[10,15]$. A dangerous eigenvalue is located at distance $d=10^{-10}$ from the pole at $z=10$.
\label{fig:exp1_setup}}
\end{figure}

However, the amplifying effect of a dangerous eigenvalue may cause issues when computing the iterates in~\cref{eqn:ratSI} in floating-point arithmetic. \Cref{fig:exp1_setup} shows the eigenvalues of a $100\times 100$ real symmetric matrix plotted in the complex plane over the magnitude (indicated by color) of a rational filter targeting the interval $[10,15]$. The matrix has a large cluster of eigenvalues in the interval $[0,5]$, where the filter has decayed to less than unit round-off, and a small set of eigenvalues in the target region, where the filter has magnitude close to $1$. One eigenvalue of the matrix is very close to the pole at $z=10$, separated by a distance of $10^{-10}$. By~\cref{thm:1step_standard}, we expect that (in exact arithmetic) all of the eigenvalues in the target region are resolved to accuracy on the order of $u$ after one iteration. However,~\cref{fig:exp1_results} (left) shows that only the dangerous eigenpair has been computed accurately. The residuals of the remaining target eigenpairs are on the order of $10^{-5}$, that is, roughly $u/d$.

\begin{figure}[!tbp]
  \centering
  \begin{minipage}[b]{0.48\textwidth}
    \begin{overpic}[width=\textwidth]{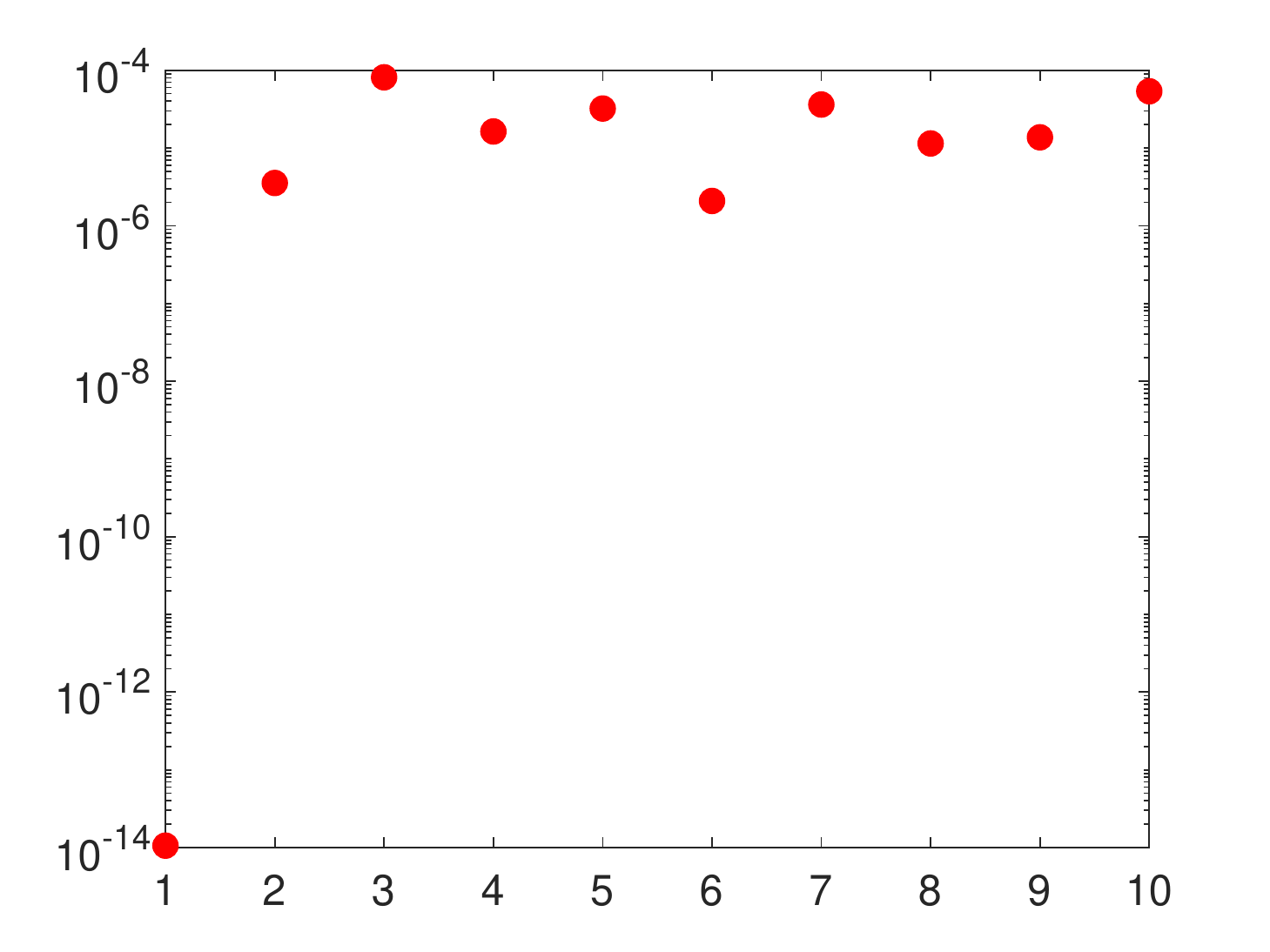}
   	 \put (34,73) {$\displaystyle \|A\hat v_i-\hat \lambda_i\hat v_i\|$}
     \put (50,-2) {$\displaystyle i$}
     \end{overpic}
  \end{minipage}
  \hfill
  \begin{minipage}[b]{0.48\textwidth}
    \begin{overpic}[width=\textwidth]{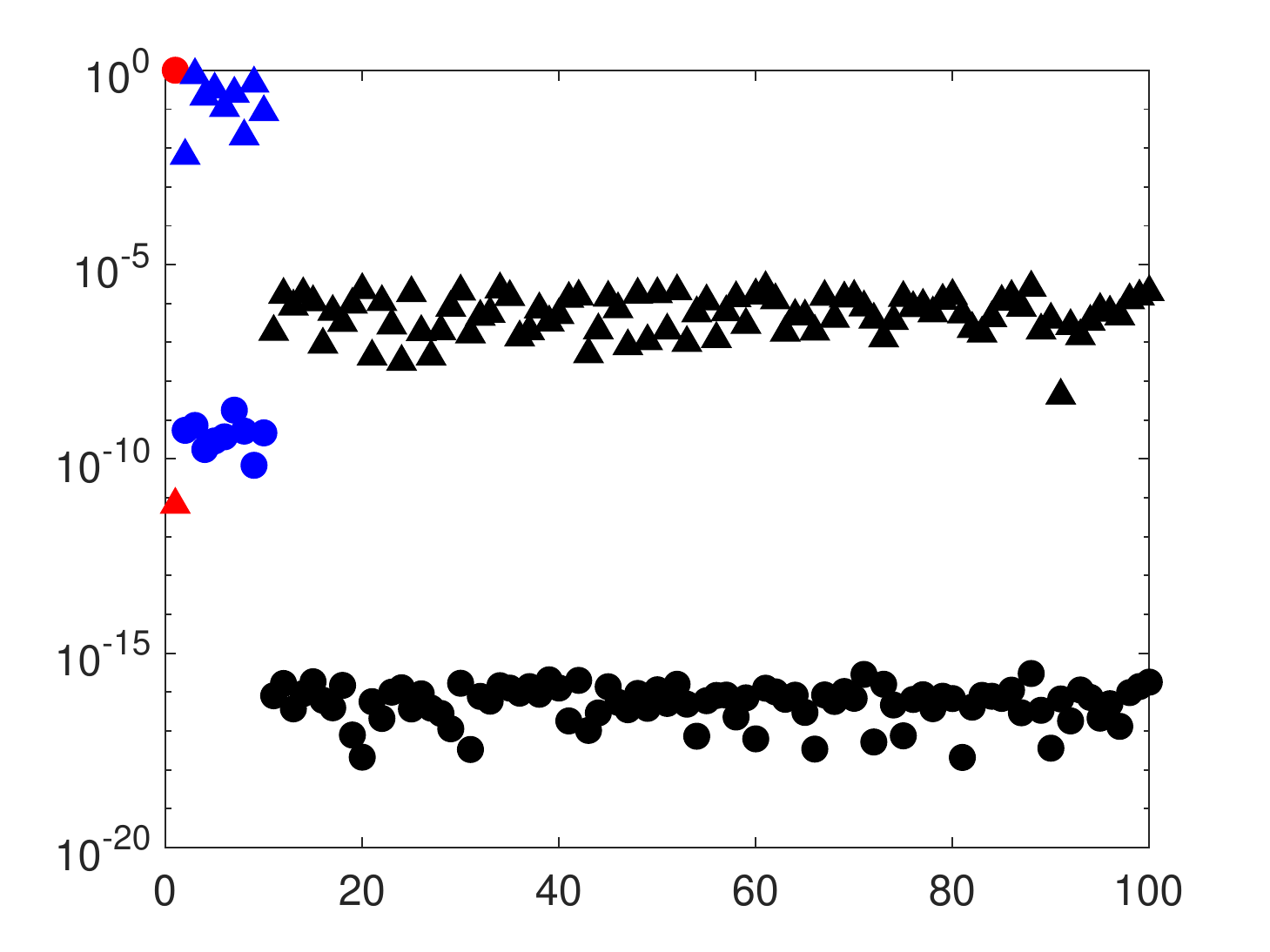}
     \put (43,73) {$\displaystyle |v_i^*\hat q^{(1)}_j|$}
     \put (50,-2) {$\displaystyle i$}
          \put (70,26) {$\displaystyle j=1$}
   	 \put (70,58) {$\displaystyle j=10$}
     \end{overpic}
  \end{minipage}
  \caption{The residuals of $10$ target eigenpairs of a $100\times 100$ real-symmetric matrix after one iteration of subspace iteration with the rational filter in~\cref{fig:exp1_setup} are plotted on the left. On the right are the eigenvector coordinates of the $1$st (circles) and $10$th (triangles) columns of $\hat Q_1$. The dangerous component (red) and the remaining target components (blue) dominate in columns $1$ and $10$, respectively. The unwanted eigenvector components (black) are filtered out almost entirely to order $u$ in the $1$st column, but are orders of magnitude larger in the $10$th column, with magnitude near $u/d$. 
\label{fig:exp1_results}}
\end{figure}

The large residuals are best explained with a look at the computed orthonormal basis $\hat Q_1$ 
(we denote computed quantities with a hat throughout, so $\hat Q_1$ is the computed approximant to $Q_1$)
in the eigenvector coordinates in~\cref{fig:exp1_results}. The first column of $\hat Q_1$ (circular markers) looks as expected: the dangerous eigenvector dominates and the unwanted components are near the unit round-off in magnitude. However, the magnitude of the unwanted components is much larger, on the order of $u/d$, in the remaining columns $\hat q_2^{(1)},\ldots,\hat q_m^{(1)}$. The $10$th column (triangular markers in~\cref{fig:exp1_results}) is representative of this observation. Although the quality of the filter means that the unwanted components should be on the order of $u$ in the columns of $\hat Q_1$, they are polluted with noise on the order of $u/d$ in all but the first column. Consequently, the accuracy in the remaining Ritz pairs computed from $\hat Q_1$ is also degraded to $u/d$.

There are two potential sources of error degrading the accuracy in $\hat Q_1$. The first is the most obvious: round-off errors are amplified when solving the ill-conditioned linear system associated with the dangerous eigenvalue. The second source is more subtle: the overwhelming dominance of the dangerous eigenvector in each column of $X_1$ leads to an ill-conditioned basis for $\mathcal{S}_1$. Remarkably, the heart of the story in~\cref{fig:arnoldi_fsi,fig:contour_refine} is contained in the latter, subtler effect and we can learn a great deal without mentioning errors incurred while applying $r(A)$. Of course, a thorough understanding requires a careful treatment of the ill-conditioned linear systems and the accumulation of errors at each iteration. We address both points in~\cref{sec:PSI_analysis}, where we study the convergence and stability of the iteration in~\cref{eqn:ratSI} when computed in floating-point arithmetic. For now, we focus on the influence of ill-conditioning in the iterates $X_1,X_2,\ldots$, noting that round-off errors in the computed iterates have little effect on their condition number (see~\cref{sec:PSI_analysis} for a full explanation).

\subsection{Accuracy of the computed orthonormal basis}\label{sec:sensitive_ONB}

When a basis $X\in\mathbb{C}^{n\times m}$ is ill-conditioned, small perturbations to the columns can have a large effect on their span. This is reflected in the sensitivity of the orthogonal factor in the QR factorization, $Q={\rm qf}(X)$. If, for some small $\epsilon>0$, $X$ is perturbed by $\Delta X$ with $\|\Delta X\|\leq \epsilon\|X\|$, then there is a $\Delta Q$ such that $Q+\Delta Q={\rm qf}(X+\Delta X)$ and~\cite[p.~382]{higham2002accuracy}
\begin{equation}\label{eqn:perturbed_QR}
\|\Delta Q\|\leq c_m\kappa(X)\|\Delta X\|/\|X\|.
\end{equation}
Here, $c_m$ is a modest constant depending only on the dimension $m$ and $\kappa(\cdot)$ denotes the $2$-norm condition number of a rectangular matrix. \Cref{eqn:perturbed_QR} tells us that when $X$ is highly ill-conditioned, the QR factorization may be extremely sensitive to perturbations. When we compute an orthonormal basis $\hat Q$ in floating-point arithmetic, we are not guaranteed accuracy much better than $\|\hat Q - Q\|\leq c_m\kappa(X)u$ (at least, as long as the columns of $X$ do not vary significantly in magnitude).

Because the rational filter amplifies the $v_1$ component in each column of $Q_0$ by $1/d$ in~\cref{eqn:ratSI}, $X_1$ is usually extremely ill-conditioned. Intuitively, $\kappa(X_1)$ cannot be much worse than $|r(\lambda_1)|/|r(\lambda_m)|$ and not much better than $|r(\lambda_1)|/|r(\lambda_2)|$ because $v_1$ is present in each column with magnitude near $|r(\lambda_1)|$ while the rest of the target eigenpairs are present with magnitude at least $|r(\lambda_m)|$ and no greater than $|r(\lambda_2)|$. \Cref{lem:X1_condition} makes this intuition precise in the form of an upper bound and asymptotic lower bound. The implication is that the error in the computed orthonormal basis $\hat Q_1$ is on the order of $u/d$ as long as the columns of $Q_0$ are not orthogonal to the dangerous eigenvector, as we observed in~\cref{fig:exp1_results}. 

We use the shorthand notation $f(x)\lesssim g(x)$ to denote the asymptotic relation
\begin{equation}
f(x)\leq g(x)(1+o(1)), \qquad\text{as}\qquad x\rightarrow 0.
\end{equation}
Note that this is slightly sharper than $f(x)=\mathcal{O}(g(x))$, but weaker than $f(x)\sim g(x)$.\footnote{This definition of $f\lesssim g$ is sharper than its common usage in the analysis of partial differential equations, where it means $f\leq Cg$ for some constant $C>0$~\cite[p.~xiv]{tao2006nonlinear}.}

\begin{proposition}
\label{lem:X1_condition}
Let normal $A\in\mathbb{C}^{n\times n}$ and $r:\Lambda\rightarrow\mathbb{C}$ satisfy~\cref{eqn:eigendecomposition,eqn:eig_index}, respectively, and given orthonormal $Q_0\in\mathbb{C}^{n\times m}$, let $X_1=r(A)Q_0$. If $V_1^*Q_0$ has full rank, then the condition number of $X_1$ satisfies
\begin{equation}\label{eqn:X1_condition}
\frac{\|v_1^*Q_0\|}{d|r(\lambda_2)|} \lesssim \kappa(X_1)\leq\Big|\frac{r(\lambda_1)}{r(\lambda_m)}\Big|\|(V_1^*Q_0)^{-1}\|, \qquad\text{as}\qquad d\rightarrow 0.
\end{equation}
\end{proposition}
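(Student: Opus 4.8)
The plan is to read off both bounds directly from the eigendecomposition~\cref{eqn:eigendecomposition}, writing $X_1 = r(A)Q_0 = V_1 r(\Lambda_1) V_1^*Q_0 + V_2 r(\Lambda_2) V_2^*Q_0$, and then estimating $\sigma_{\max}(X_1)$ from above and $\sigma_{\min}(X_1)$ from below (for the upper bound on $\kappa$) and vice versa (for the lower bound). Since $A$ is normal, $[V_1 \; V_2]$ is unitary, so $\sigma_i(X_1) = \sigma_i\bigl(\left[\begin{smallmatrix} r(\Lambda_1)V_1^*Q_0 \\ r(\Lambda_2)V_2^*Q_0\end{smallmatrix}\right]\bigr)$; this reduces everything to block matrices whose blocks are diagonal filter matrices times pieces of the (generically full-rank) matrix $[V_1\;V_2]^*Q_0$, which itself has orthonormal columns.

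For the \textbf{upper bound}, I would bound $\sigma_{\max}(X_1) = \|X_1\| = \|r(A)Q_0\| \le \|r(A)\| = |r(\lambda_1)|$, using~\cref{eqn:eig_index} and normality. For $\sigma_{\min}(X_1)$, note $X_1 = r(A)Q_0$ with $r(A)$ invertible, so $\sigma_{\min}(X_1) \ge \sigma_{\min}(r(A))\,\sigma_{\min}(Q_0)$ is too crude; instead project onto the target block: $\sigma_{\min}(X_1) \ge \sigma_{\min}(V_1^*X_1) = \sigma_{\min}(r(\Lambda_1)V_1^*Q_0) \ge \sigma_{\min}(r(\Lambda_1))\,\sigma_{\min}(V_1^*Q_0) = |r(\lambda_m)|/\|(V_1^*Q_0)^{-1}\|$, using $|r(\lambda_m)| = \sigma_{\min}(r(\Lambda_1))$ from~\cref{eqn:eig_index} and the full-rank hypothesis on $V_1^*Q_0$. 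Dividing gives $\kappa(X_1) \le |r(\lambda_1)/r(\lambda_m)|\,\|(V_1^*Q_0)^{-1}\|$, which is exactly the claimed right-hand side; this part involves no asymptotics and holds for all $d$.

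For the \textbf{lower bound}, the point is that as $d\to 0$ the filter behaves like $r(\lambda) = \frac{1}{de^{i\theta}}$ near $\lambda_1$ plus an $\mathcal{O}(1)$ remainder, so $|r(\lambda_1)| = \frac1d(1 + o(1))$, while $\sigma_{\min}(X_1) \le \|V_2^{(2)\perp}\ldots\|$ — more precisely, I would get an upper bound on $\sigma_{\min}(X_1)$ by exhibiting a particular near-null direction. Deleting the $v_1$ row: let $W = [v_2,\dots,v_n]$, so $W^*X_1 = \tilde r(\Lambda)W^*Q_0$ where $\tilde r(\Lambda)=\mathrm{diag}(r(\lambda_2),\dots,r(\lambda_n))$ stays $\mathcal{O}(1)$ as $d\to 0$; then $\sigma_{\min}(X_1) \le \min\{\sigma_{\min}(v_1^*X_1),\ \|W^*X_1\|\cdot(\text{stuff})\}$. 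Cleanest: since $X_1$ is $n\times m$, pick a unit vector $y\in\mathbb{C}^m$ with $v_1^*Q_0 y = 0$ (possible when $m\ge 2$ and $v_1^*Q_0\neq 0$, i.e. generically; actually any $y$ in the $(m-1)$-dim kernel of the row vector $v_1^*Q_0$), so that $v_1^*X_1 y = r(\lambda_1)v_1^*Q_0 y = 0$ and hence $X_1 y = \sum_{i\ge 2} r(\lambda_i) v_i v_i^*Q_0 y$ has norm $\le \max_{i\ge2}|r(\lambda_i)|\cdot\|W^*Q_0 y\| \le |r(\lambda_2)|$ by~\cref{eqn:eig_index} (the spectral-gap/ordering assumption puts $\lambda_2$ at the top of the tail). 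Thus $\sigma_{\min}(X_1)\le |r(\lambda_2)|$, and combined with $\sigma_{\max}(X_1)\ge |v_1^*X_1 u| = |r(\lambda_1)|\,\|v_1^*Q_0\|$ for $u = (v_1^*Q_0)^*/\|v_1^*Q_0\|$, we get $\kappa(X_1) \ge |r(\lambda_1)|\,\|v_1^*Q_0\| / |r(\lambda_2)| = \frac{\|v_1^*Q_0\|}{d|r(\lambda_2)|}(1+o(1))$, which is the claimed $\lesssim$ inequality.

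The \textbf{main obstacle} is bookkeeping around the two edge cases and making the ``$\lesssim$'' precise: first, the argument for $\sigma_{\min}(X_1) \le |r(\lambda_2)|$ uses a null vector of $v_1^*Q_0$, which exists because $m\le n$ and $v_1^*Q_0$ is $1\times m$ — fine for $m\ge 1$ (for $m=1$ one instead argues directly that $\sigma_{\min}(X_1)=\|X_1\|$ and the lower bound is trivial or vacuous); and second, one must confirm that the only place $d$ enters asymptotically is through $|r(\lambda_1)| = \frac1d + \mathcal{O}(1)$, so that $|r(\lambda_1)| = \frac1d(1+o(1))$ and the $(1+o(1))$ factors collect correctly into the $\lesssim$ notation defined just before the proposition. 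Everything else is a one-line singular-value inequality, so I expect the write-up to be short.
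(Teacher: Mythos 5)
Your proposal is correct, and the upper bound is proved exactly as in the paper: bound $\sigma_1(X_1)\le|r(\lambda_1)|$, then bound $\sigma_m(X_1)$ from below by the smallest singular value of the target block $V_1^*X_1=r(\Lambda_1)V_1^*Q_0$ (your observation that $V_1^*$ is a contraction is the same as the paper's remark that deleting rows cannot increase $\sigma_m$). The only genuine divergence is in the lower bound, specifically in showing $\sigma_m(X_1)\le|r(\lambda_2)|$: the paper splits $X_1=N_1+N_2$ with $N_1=r(\lambda_1)v_1v_1^*Q_0$ of rank one and invokes Thompson's interlacing inequality $\sigma_2(X_1)\le\sigma_1(N_2)+\sigma_2(N_1)\le|r(\lambda_2)|$, whereas you exhibit an explicit unit vector $y$ in the kernel of the $1\times m$ row $v_1^*Q_0$ and compute $\|X_1y\|^2=\sum_{i\ge2}|r(\lambda_i)|^2|v_i^*Q_0y|^2\le|r(\lambda_2)|^2$ directly. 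Your device is more elementary (no interlacing theorem needed), yields the bound non-asymptotically, and makes the $m\ge2$ requirement explicit; the paper's interlacing route generalizes more gracefully when the "dangerous" part has rank greater than one (several clustered dangerous eigenvalues), since one then needs $\sigma_{k+1}(X_1)$ rather than a single null direction. Both arguments pair with the same estimate $\sigma_1(X_1)\ge|r(\lambda_1)|\,\|v_1^*Q_0\|=\|v_1^*Q_0\|/d\,(1+o(1))$, so the conclusions coincide.
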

\begin{proof}
The condition number of $X_1$ may be written as $\kappa(X_1)=\sigma_1(X_1)/\sigma_m(X_1)$, where $\sigma_1(X_1)\geq\cdots\geq\sigma_m(X_1)$ are the singular values of $X_1$. To bound $\sigma_1(X_1)$ above, we substitute the spectral decomposition $r(A)=Vr(\Lambda)V^*$ into the definition of $X_1$ and estimate $\sigma_1(X_1)\leq |r(\Lambda_1)|\|V^*Q_0\|\leq|r(\lambda_1)|$. To bound $\sigma_m(X_1)$ below, we use the spectral decomposition in~\cref{eqn:eigendecomposition} to write
\begin{equation}\label{eqn:spec_decomp2}
X_1 =\begin{bmatrix}
V_1 & V_2
\end{bmatrix}
\begin{bmatrix}
M_1 \\ M_2
\end{bmatrix},
\end{equation}
where $M_1=r(\Lambda_1)V_1^*Q_0$ and $M_2=r(\Lambda_2)V_2^*Q_0$. Because $V$ is unitary, the singular values of $X_1$ are precisely those of 
$\big[\begin{smallmatrix}  M_1\\M_2\end{smallmatrix}\big]$. Furthermore, $\sigma_m(X_1)\geq\sigma_m(M_1)$ since adding rows can only increase the singular values of a matrix. Finally, since $\sigma_m(M_1)=\|M_1^{-1}\|^{-1}$, we have that $\kappa(X_1)\leq \sigma_1(X_1)\|M_1^{-1}\|$. We estimate that
$$
\|M_1^{-1}\|=\|(V_1^*Q_0)^{-1}r(\Lambda_1)^{-1}\|\leq\|(V_1^*Q_0)^{-1}\||r(\lambda_m)|^{-1}.
$$
Collecting the bounds on $\sigma_1(X_1)$ and $\|M_1^{-1}\|$ establishes the upper bound in~\cref{eqn:X1_condition}.

To establish the asymptotic lower bound, we apply~\cref{eqn:amplify_v1} to $r(A)Q_0$, obtaining
\begin{equation}\label{eqn:X1_asymptotics}
X_1 = \sum_{i=1}^n r(\lambda_i)v_iv_i^*Q_0 = \frac{v_1^*Q_0}{de^{i\theta}}v_1 + \mathcal{O}(1), \quad\text{as}\quad d\rightarrow 0.
\end{equation}
Taking norms provides the asymptotic lower bound on $\sigma_1(X_1)$. To obtain a lower bound on $\sigma_m(X_1)^{-1}$, we can bound $\sigma_m(X_1)$ from above with an interlacing property for singular values of matrices subject to rank one perturbations. We rewrite~\cref{eqn:X1_asymptotics} as
$$
X_1 = r(\lambda_1)v_1v_1^*Q_0 + V{\rm diag}(0, r(\lambda_2), \ldots, r(\lambda_n))V^*Q_0= N_1 + N_2.
$$
Now, $\sigma_2(N_1)=0$ and $\sigma_1(N_2)\leq|r(\lambda_2)|$ so, by interlacing~\cite{thompson1976behavior}, we obtain the estimate
$$
\sigma_2(X_1)\leq \sigma_1(N_2) + \sigma_2(N_1)\leq|r(\lambda_2)|.
$$
As $\sigma_m(X_1)\leq \sigma_2(X_1)$ implies that $1/\sigma_m(X_1)\geq |r(\lambda_2)|^{-1}$, collecting lower bounds concludes the proof of~\cref{eqn:X1_condition}.
\end{proof}

The factor $\|(V_1^*Q_0)^{-1}\|$ in~\cref{lem:X1_condition} appears naturally in connection with subspace iteration, and we will encounter it again in~\cref{sec:PSI_analysis}. It is precisely the reciprocal of $\cos\theta_1(\mathcal{S}_k,\mathcal{V})$ (see~\cref{def:PABS}), approaching unity when $\mathcal{V}$ and $\mathcal{S}_0$ are nearby and blowing up quadratically when they are made orthogonal. In~\cref{lem:X1_condition} it indicates that $X_1$ may suffer additional ill-conditioning if the initial subspace $\mathcal{S}_0$ is accidentally chosen to be too near orthogonal to $\mathcal{V}$.\footnote{When $Q_0$ is selected so that its entries are independent, identically distributed Gaussian random variables, $\|(V_1^*Q_0)^{-1}\|$ is roughly $\sqrt{m}$ in expectation, but can be an order of magnitude or so larger with nontrivial probability. A powerful workaround is to work with a slightly larger subspace and take $m$ larger than the number of target eigenvalues; this dramatically reduces the probability of large $\|(V_1^*Q_0)^{-1}\|$~\cite{davidson2001local}.}

\section{Twice is enough}\label{sec:twice_is_enough}

In~\cref{lem:X1_condition}, the asymptotic lower bound in~\cref{eqn:X1_condition} plummets if the columns of $Q_0$ are taken nearly orthogonal to $v_1$, the dangerous eigenvector. This is because the rational filter has nothing to amplify when $v_1$ is absent in the columns of $Q_0$. If $v_1$ is present with magnitude no greater than $\mathcal{O}(d)$ in $Q_0$, then the columns of $X_1$ are not strongly aligned along any single eigenvector and the conditioning of $X_1$ is likely to improve. Crucially, this intuition holds even if $v_1$ dominates in one column but not the others. The main point is that the columns of $X_1$ are no longer necessarily close to a linearly dependent set.

\begin{figure}[!tbp]
  \centering
  \begin{minipage}[b]{0.48\textwidth}
    \begin{overpic}[width=\textwidth]{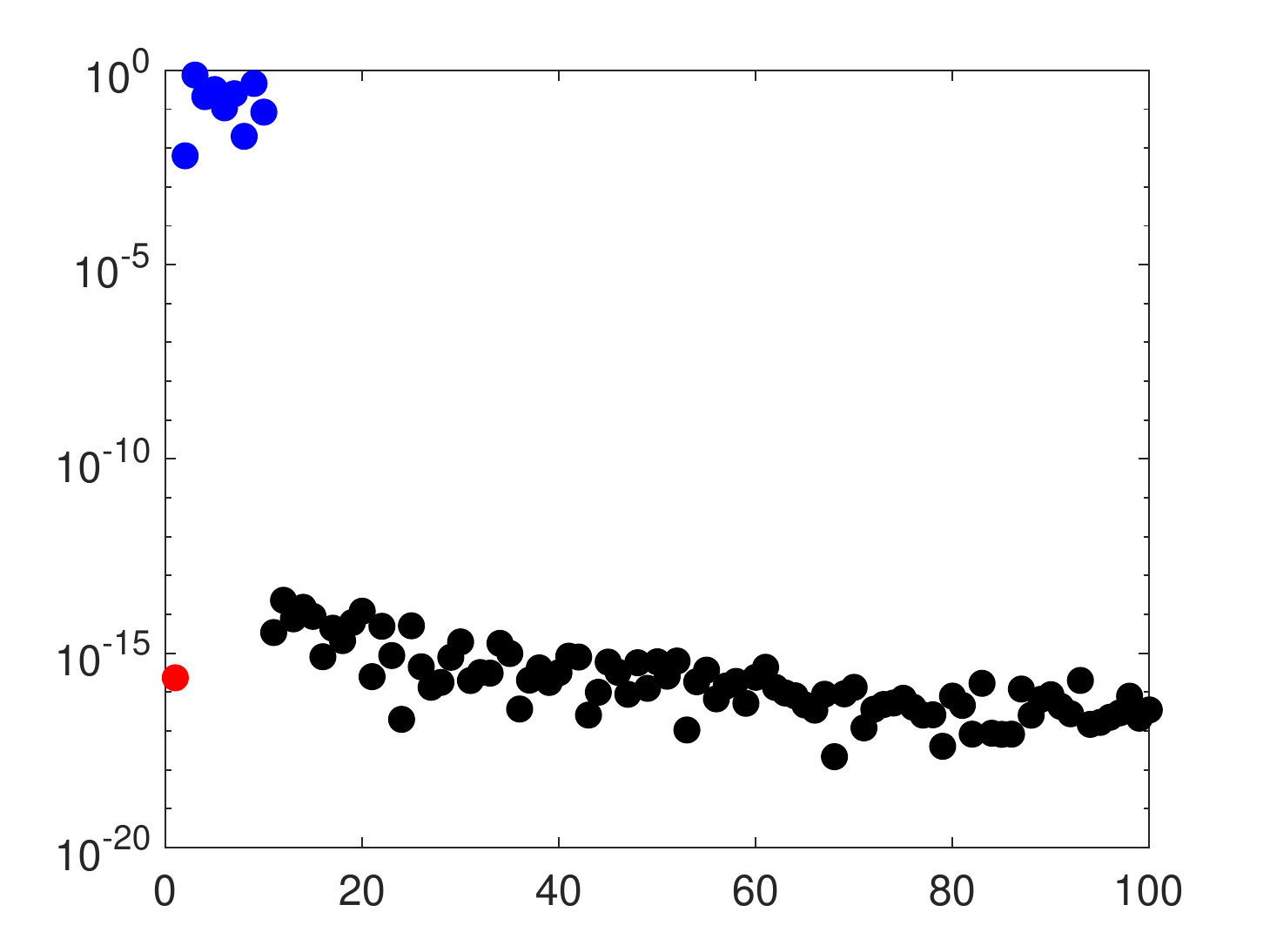}
     \put (43,73) {$\displaystyle |v_i^*\hat q^{(2)}_{10}|$}
     \put (50,-2) {$\displaystyle i$}
     \end{overpic}
  \end{minipage}
  \hfill
  \begin{minipage}[b]{0.48\textwidth}
    \begin{overpic}[width=\textwidth]{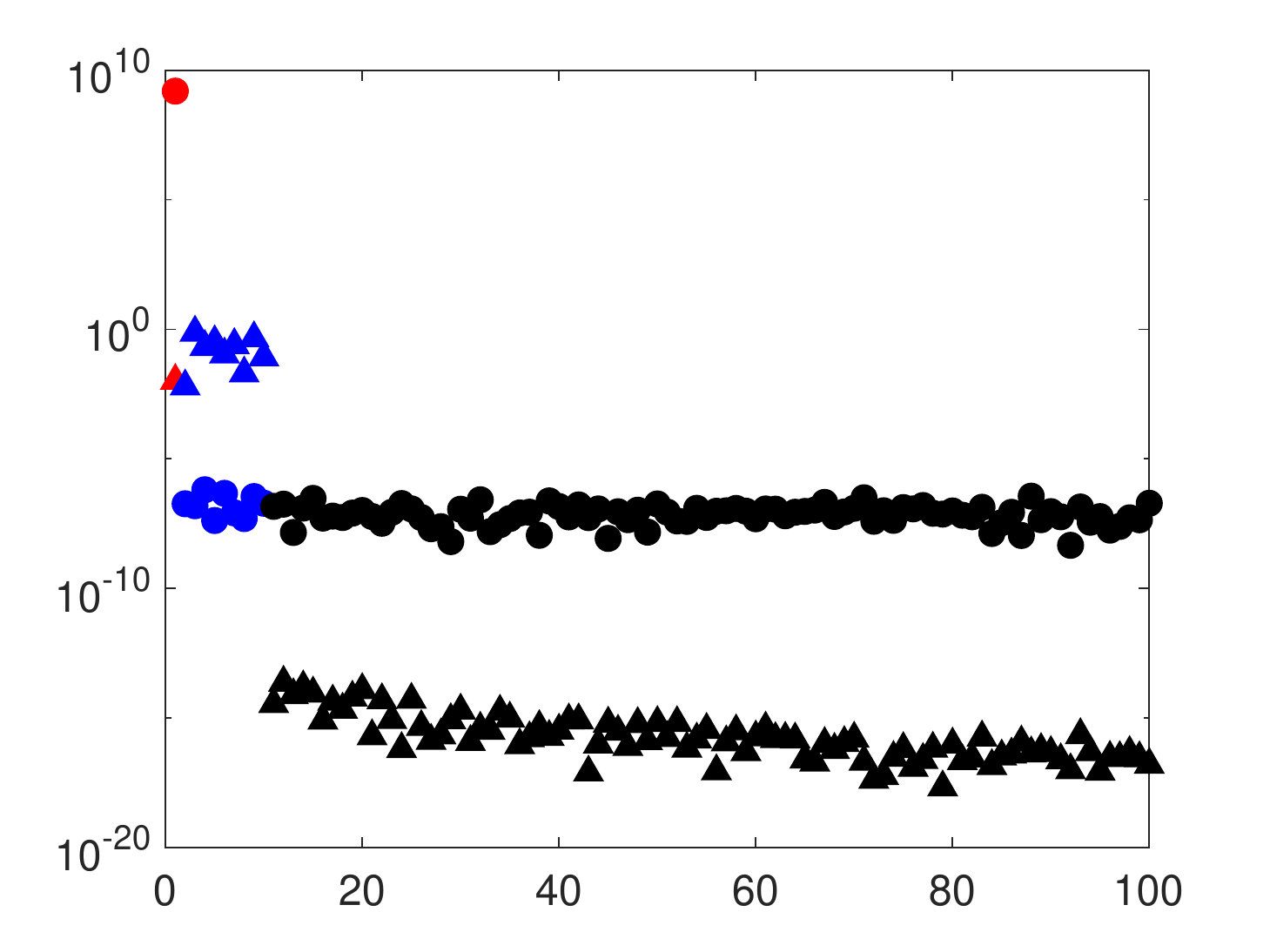}
     \put (43,73) {$\displaystyle |v_i^*\hat x^{(2)}_j|$}
     \put (50,-2) {$\displaystyle i$}
   	 \put (70,40) {$\displaystyle j=1$}
   	 \put (70,20) {$\displaystyle j=10$}
     \end{overpic}
  \end{minipage}
  \caption{The structure of the iterates $\hat X_2$ and $\hat Q_2$ after the second iteration of subspace iteration with a rational filter. On the left, the eigenvector coordinates of the $10$th column of the computed orthonormal basis color-coded for dangerous component (red), remaining target components (blue), and unwanted components (black). On the right, the eigenvector coordinates of the $1$st (circles) and $10$th (triangles) columns of the computed basis $\hat X_2$ with the same color code used in the left panel.
\label{fig:exp2_results}}
\end{figure}

Let us return to the example of~\cref{fig:exp1_setup}. If we print out the residual norms of the target eigenpairs after the second iteration of~\cref{eqn:ratSI}, we see remarkable improvement:
\begin{verbatim}
   	 6.7997e-15   2.5942e-14    2.268e-13   4.3433e-14   9.1978e-14
   	 1.3716e-14   9.7045e-14   3.4121e-14   1.4594e-13   4.0235e-14
\end{verbatim}
Now all the target pairs have been resolved to within $13$ or $14$ digits of accuracy, in contrast to~\cref{fig:exp1_results} (left). If we examine the computed orthonormal basis used to extract the Ritz pairs, we observe that the noise in the direction of the unwanted eigenvectors has also been reduced to the order of $u$, compared with $u/d$ in the first iteration. \Cref{fig:exp2_results} (left) illustrates the composition of the $10$th column of $\hat Q_2$, which is representative of the last $m-1$ columns.
 
The reason for the restored accuracy in the computed orthonormal basis is that, unlike $X_1$, the basis $X_2$ has an even blend of the target eigenvector directions in all but the first of its columns. \Cref{fig:exp2_results} (right) displays the magnitude of the eigenvector coordinates for the first (circular markers) and last (triangular markers) columns of the computed basis, $\hat X_2$. In the first column, the dangerous direction is effectively the only direction present, since all other components appear with relative magnitude near $u$. In contrast, the last column of $\hat X_2$ contains order one components in each target direction with the unwanted directions completely filtered out. The remaining columns of $\hat X_2$ are similar in composition to the last. Without $v_1$ dominating in every column, we can accurately extract an orthonormal basis.

The clue to the stark difference in the composition of $\hat X_1$ and $\hat X_2$ is contained in~\cref{fig:exp1_results}. We see that the first column of $\hat Q_1$ is dominated by the dangerous eigenvector, up to the $9$th or $10$th digit. Consequently, the remaining columns of $\hat Q_1$ are nearly orthogonal to $v_1$. We observe this in~\cref{fig:exp1_results} (right), where $v_1^*q_{10}^{(1)}\approx 10^{-11}$. When the rational filter is applied to $\hat Q_1$ in the second iteration, the amplification of $v_1$ restores an even blend of the target eigenvectors in the last $m-1$ columns of $\hat X_2$, rather than boosting $v_1$ above the others.

\subsection{A well-conditioned basis}\label{sec:well_conditioned_basis}

Motivated by the preceding discussion, we now examine the condition number of the second iterate, $X_2$, when $q_1^{(1)}$ is a good approximation to $v_1$ and, consequently, all but one of the columns of $Q_1$ are deficient in the dangerous direction. We then briefly explain the structure of the eigenvector coordinates for the computed orthonormal basis $\hat Q_1$ observed in~\cref{fig:exp1_results}.

To investigate how a weak presence of $v_1$ in columns of $Q_1$ improves the conditioning of $X_2$, we break the target eigenvector coordinates of $Q_1$ into blocks, as
\begin{equation}\label{eqn:Q1_structure}
V_1^*Q_1 = \begin{bmatrix}
v_1^*q_1^{(1)} & v_1^*\tilde Q_1 \\
\tilde V_1^*q_1^{(1)} & \tilde V_1^*\tilde Q_1
\end{bmatrix}
=\begin{bmatrix}
a & b \\
c & D
\end{bmatrix}.
\end{equation}
Here, we use $\tilde V_1$ and $\tilde Q_1$ to denote the $n\times (m-1)$ matrices formed by removing the first columns of $V_1$ and $Q_1$, respectively. When $q_1^{(1)}$ closely approximates $v_1$, $\|c\|$ is small due to the orthogonality of the eigenvectors. Moreover, $\|b\|$ is also small because the columns of $\tilde Q_1$ are nearly orthogonal to $v_1$. Let us suppose that $q_1^{(1)}=v_1+\mathcal{O}(d)$, so that $b$ and $c$ are $\mathcal{O}(d)$ (we explain why this holds momentarily, following~\cref{thm:twice-is-enough}).

After applying the rational filter to $Q_1$, the eigenvector coordinates $V_1^*X_2$ inherit a natural block structure from~\cref{eqn:Q1_structure}. Letting $\tilde\Lambda_1={\rm diag}(\lambda_2,\ldots,\lambda_m)$, we have that
\begin{equation}\label{eqn:X2_structure}
V_1^*X_2 = r(\Lambda_1)V_1^*Q_1
=\begin{bmatrix}
r(\lambda_1)a & r(\lambda_1)b \\
r(\tilde\Lambda_1)c & r(\tilde\Lambda_1)D
\end{bmatrix}.
\end{equation}
While the bottom left block remains small, with norm no greater than $|r(\lambda_2)|\|c\|=\mathcal{O}(d)$, the entire first row is amplified by $|r(\lambda_1)|$ so that $\|r(\lambda_1)b\|\approx \|b\|/d$.

To estimate the condition number of $X_2$, we scale the columns of $X_2$ with the $m\times m$ diagonal matrix
\begin{equation}\label{eqn:column_scaling}
T={\rm diag}(r(\lambda_1)^{-1},1,\ldots,1).
\end{equation}
This diagonal scaling does not alter ${\rm span}(X_2)$ or the sensitivity of the orthonormal basis, $Q_2={\rm qf}(X_2)$. (Note that scaling the columns of $X_1$ has no effect in~\cref{lem:X1_condition}, as the columns of $X_1$ all have magnitude $\approx 1/d$.) However, it conveniently puts the diagonal blocks in~\cref{eqn:X2_structure} on equal footing and ensures that $\sigma_1(X_2T)=\mathcal{O}(1)$, so that any ill-conditioning due to the dangerous eigenvalue is captured in the smallest singular value of $X_2T$. This allows us to focus on computing a lower bound for $\sigma_m(X_2T)$ or, equivalently, an upper bound for $1/\sigma_m(X_2T)$. Just as in the proof of~\cref{lem:X1_condition}, it suffices to bound $\|(V_1^*X_2T)^{-1}\|$ from above (assuming as usual that $V_1^*Q_1$, and therefore $V_1^*X_2$, has full rank).

With all of the ingredients in place, the estimate is fairly straightforward. After the column scaling, $V_1^*X_2T$ is approximately block upper triangular. We have that
\begin{equation}\label{eqn:block_UT}
V_1^*X_2T
=\begin{bmatrix}
r(\lambda_1)a & r(\lambda_1)b \\
r(\tilde\Lambda_1)c & r(\tilde\Lambda_1)D
\end{bmatrix}T
=
\begin{bmatrix}
a & b/(de^{i\theta}) \\
 & r(\tilde\Lambda_1)D
\end{bmatrix}
+
\mathcal{O}(d).
\end{equation}
We can apply the formula for $2\times 2$ block upper triangular matrix inversion and the fact that matrix inversion is locally Lipschitz continuous to compute (for $d$ sufficiently small)
\begin{equation}\label{eqn:UT_inverse}
(V_1^*X_2T)^{-1}
=
\begin{bmatrix}
a^{-1} & -a^{-1}D^{-1}r(\tilde\Lambda_1)^{-1}b/(de^{i\theta}) \\
 & D^{-1}r(\tilde\Lambda_1)^{-1}
\end{bmatrix}\left(I+\mathcal{O}(d)\right).
\end{equation}
The norm of the block upper triangular matrix in~\cref{eqn:UT_inverse} is bounded by the sum of the norms of the blocks, so we conclude that $1/\sigma_m(X_2T)\leq \|(V_1^*X_2T)^{-1}\|=\mathcal{O}(1)$ when $\|b\|=\mathcal{O}(d)$. Estimating the norms of these blocks individually and combining with an estimate for $\sigma_1(X_2T)$ leads to the following upper bound on $\kappa(X_2T)$.

\begin{theorem}[Twice-is-enough]\label{thm:twice-is-enough}
Let normal $A\in\mathbb{C}^{n\times n}$ and $r:\Lambda\rightarrow\mathbb{C}$ satisfy~\cref{eqn:eigendecomposition,eqn:eig_index}, respectively, and given orthonormal $Q_1\in\mathbb{C}^{n\times m}$, let $X_2=r(A)Q_1$. Let $b$ and $D$ denote the blocks of $V_1^*Q_1$ in~\cref{eqn:Q1_structure}. If $D$ is invertible and the first column of $Q_1$ satisfies $\|q_1^{(1)}-v_1\|=\mathcal{O}(d)$, then $\|b\|=\mathcal{O}(d)$ and
\begin{equation}\label{eqn:twice-is-enough}
\kappa(X_2T)\leq M\left(\left(\frac{\|b\|}{d}+1\right)\frac{\|D^{-1}\|}{|r(\lambda_m)|}+1\right)+\mathcal{O}(d) \qquad\text{as}\qquad d\rightarrow 0.
\end{equation}
Here, $T={\rm diag}(r(\lambda_1)^{-1},1,\ldots,1)\in\mathbb{C}^{m\times m}$ and $M=\|b\|/d+{\rm max}\{1,|r(\lambda_2)|\}$.
\end{theorem}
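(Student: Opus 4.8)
The plan is to follow the blueprint laid out in the paragraphs immediately preceding the theorem, but to replace the informal ``$\mathcal{O}(d)$'' manipulations with explicit bounds that survive in the $d\to 0$ limit. First I would establish the claim $\|b\|=\mathcal{O}(d)$: since the columns of $\tilde Q_1$ are orthonormal and orthogonal to $q_1^{(1)}$, and $q_1^{(1)} = v_1 + e$ with $\|e\|=\mathcal{O}(d)$, we have $v_1^*\tilde Q_1 = (q_1^{(1)}-e)^*\tilde Q_1 = -e^*\tilde Q_1$, so $\|b\| = \|v_1^*\tilde Q_1\|\leq\|e\|=\mathcal{O}(d)$. (The same computation gives $\|c\|=\mathcal{O}(d)$, which I will also need.) This also shows $a = v_1^*q_1^{(1)} = 1 + v_1^*e = 1+\mathcal{O}(d)$, so $a$ is invertible with $a^{-1}=1+\mathcal{O}(d)$ for $d$ small.

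Next I would set up the scaled, block-triangular picture. Writing $V_1^*X_2 T = r(\Lambda_1)V_1^*Q_1 T$ with the block decomposition of $V_1^*Q_1$ from~\cref{eqn:Q1_structure} and $T$ from~\cref{eqn:column_scaling}, the $(1,1)$ block becomes $a$, the $(1,2)$ block becomes $r(\lambda_1)b = b/(de^{i\theta}) + \mathcal{O}(\|b\|)$ using~\cref{eqn:amplify_v1}'s scalar version $r(\lambda_1) = 1/(de^{i\theta}) + \mathcal{O}(1)$, the $(2,1)$ block is $r(\tilde\Lambda_1)c$ with norm $\leq |r(\lambda_2)|\,\|c\| = \mathcal{O}(d)$, and the $(2,2)$ block is $r(\tilde\Lambda_1)D$. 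So $V_1^*X_2 T$ equals the block \emph{upper}-triangular matrix of~\cref{eqn:block_UT} plus a perturbation of norm $\mathcal{O}(d)$. For the upper bound on $\kappa(X_2T) = \sigma_1(X_2T)/\sigma_m(X_2T)$ I need two estimates. For $\sigma_1$: $\sigma_1(X_2T)\leq \|r(\Lambda_1)V_1^*Q_1 T\|_{\text{acting on }V}\leq$ — here I would bound columnwise, the first column of $X_2T$ having norm $\leq \|r(\lambda_1)^{-1}r(A)q_1^{(1)}\| = \mathcal{O}(1)$ and the remaining columns having norm $\leq \|r(A)\tilde Q_1\|\leq\max\{1,|r(\lambda_2)|\}$ up to $\mathcal{O}(d)$ corrections (since $\tilde Q_1$'s columns are nearly orthogonal to $v_1$, the dangerous amplification contributes only $\mathcal{O}(\|b\|/d)=\mathcal{O}(1)$), giving $\sigma_1(X_2T)\leq M+\mathcal{O}(d)$ with $M = \|b\|/d + \max\{1,|r(\lambda_2)|\}$. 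For $1/\sigma_m$: exactly as in the proof of~\cref{lem:X1_condition}, $\sigma_m(X_2T)\geq\sigma_m(V_1^*X_2T)$ because the full $X_2T$ is $V$ times a matrix whose first block of rows is $V_1^*X_2T$, and adding rows only increases singular values; hence $1/\sigma_m(X_2T)\leq\|(V_1^*X_2T)^{-1}\|$.

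The core computation is then bounding $\|(V_1^*X_2T)^{-1}\|$. I would invoke the $2\times2$ block-triangular inversion formula on the leading-order matrix in~\cref{eqn:block_UT} to get~\cref{eqn:UT_inverse}, whose $(1,1)$ block is $a^{-1}$, $(2,2)$ block is $D^{-1}r(\tilde\Lambda_1)^{-1}$, and $(1,2)$ block is $-a^{-1}\,\bigl(b/(de^{i\theta})\bigr)\,r(\tilde\Lambda_1)^{-1}D^{-1}$; then use local Lipschitz continuity of matrix inversion to absorb the $\mathcal{O}(d)$ perturbation into an overall $(I+\mathcal{O}(d))$ factor (legitimate once $d$ is small enough that the leading matrix's smallest singular value dominates the perturbation — this is where invertibility of $D$ and of $a$ enter). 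Bounding the inverse's norm by the sum of block norms: $\|a^{-1}\| = 1+\mathcal{O}(d)$, $\|D^{-1}r(\tilde\Lambda_1)^{-1}\|\leq\|D^{-1}\|/|r(\lambda_m)|$ (since $\|r(\tilde\Lambda_1)^{-1}\| = 1/\min_{2\leq i\leq m}|r(\lambda_i)| = 1/|r(\lambda_m)|$ by the ordering~\cref{eqn:eig_index}), and the cross block $\leq (1+\mathcal{O}(d))(\|b\|/d)\,\|D^{-1}\|/|r(\lambda_m)|$. Summing gives $\|(V_1^*X_2T)^{-1}\|\leq \bigl((\|b\|/d+1)\|D^{-1}\|/|r(\lambda_m)| + 1\bigr)(1+\mathcal{O}(d))$; multiplying by $\sigma_1(X_2T)\leq M+\mathcal{O}(d)$ yields~\cref{eqn:twice-is-enough}.

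The main obstacle is making the perturbative step rigorous: justifying that the $\mathcal{O}(d)$ error terms — which appear both inside blocks (from $r(\lambda_1)=1/(de^{i\theta})+\mathcal{O}(1)$ and from $b,c=\mathcal{O}(d)$) and as an additive perturbation of the whole matrix — can be uniformly controlled and pushed into a single trailing $\mathcal{O}(d)$ in~\cref{eqn:twice-is-enough}, rather than, say, contaminating the $1/d$-sized cross term in a way that does not vanish. The delicate point is that the $(1,2)$ block carries a genuine $1/d$, so a naive $\mathcal{O}(d)$ perturbation of $V_1^*X_2T$ could in principle perturb its inverse by $\mathcal{O}(1)$ rather than $\mathcal{O}(d)$; one must verify that the relevant condition-number-times-perturbation product is actually $\mathcal{O}(d)$, which works out precisely because $\sigma_m(V_1^*X_2T)$ is bounded below by a constant (the leading block-triangular matrix has $\mathcal{O}(1)$-sized inverse once $\|b\|=\mathcal{O}(d)$) and because $\sigma_1(V_1^*X_2T)=\mathcal{O}(1)$ after scaling. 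I would handle this by first showing the leading matrix in~\cref{eqn:block_UT} has inverse of norm bounded by a $d$-independent constant, then applying the standard bound $\|(G+E)^{-1}-G^{-1}\|\leq \|G^{-1}\|^2\|E\|/(1-\|G^{-1}\|\|E\|)$ with $\|E\|=\mathcal{O}(d)$.
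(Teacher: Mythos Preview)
Your proposal is correct and follows essentially the same approach as the paper's proof: establish $\|b\|=\mathcal{O}(d)$ from orthogonality of $\tilde Q_1$ and $q_1^{(1)}$, bound $1/\sigma_m(X_2T)$ via $\|(V_1^*X_2T)^{-1}\|$ using the block upper-triangular inversion formula~\cref{eqn:UT_inverse} with a Lipschitz perturbation argument, and bound $\sigma_1(X_2T)$ by an $\mathcal{O}(1)$ constant $M$. The only tactical difference is that the paper bounds $\sigma_1(X_2T)=\|V^*X_2T\|$ by splitting the full $n\times m$ block matrix into its block-diagonal and off-diagonal parts (yielding $M$ directly), whereas you argue columnwise; your extra paragraph making the $(I+\mathcal{O}(d))$ step rigorous via $\|(G+E)^{-1}-G^{-1}\|\leq\|G^{-1}\|^2\|E\|/(1-\|G^{-1}\|\|E\|)$ is in fact more careful than the paper, which simply cites local Lipschitz continuity of inversion.
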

\begin{proof}
First, the hypothesis $\|q_1^{(1)}-v_1\|=\mathcal{O}(d)$ immediately implies that $|a|=1+\mathcal{O}(d)$ and $\|b\|=\mathcal{O}(d)$. Then, following the discussion above, it suffices to bound $\|X_2T\|$ and the norms of the blocks in~\cref{eqn:UT_inverse}. The condition number of $X_2T$ is bounded above by the product of these two estimates. Since $\|r(\tilde\Lambda_1)^{-1}\|=|r(\lambda_m)|^{-1}$, we obtain that
\begin{equation}\label{eqn:bound_sigmaM}
1/\sigma_m(X_2T)\leq \|(V_1^*X_2T)^{-1}\|\leq 1+\frac{\|D^{-1}\|}{|r(\lambda_m)|}\left(1+\frac{\|b\|}{d}\right)+\mathcal{O}(d).
\end{equation}
On the other hand, we can write $V^*X_2T$ in block form analogous to~\cref{eqn:block_UT}, as
$$
V^*X_2T =
\begin{bmatrix}
a & \tilde b/(de^{i\theta}) \\
 & r(\tilde\Lambda)\tilde D
\end{bmatrix}
+
\mathcal{O}(d),
$$
where $\tilde b=V^*q_1^{(1)}$ and $\tilde D=V^*\tilde Q_1$. Calculating the norm of the block diagonal component and the off-diagonal component separately and applying the triangle inequality yields $\|X_2T\|\leq M$. Collecting with the bound in~\cref{eqn:bound_sigmaM} concludes the proof.
\end{proof}

\Cref{thm:twice-is-enough} tells us that $X_2$ is only a simple column scaling away from a well-conditioned basis when the first column of $Q_1$ approximates $v_1$ with accuracy $\mathcal{O}(d)$. Since the sensitivity (and numerical computation) of the QR factorization is not effected by column scaling, the $\mathcal{O}(1)$ bound on $\kappa(X_2T)$ in~\cref{eqn:twice-is-enough} explains why the computed orthonormal basis for $\mathcal{S}_2$ is accurate to unit round-off. This line of analysis follows naturally from our observation about the eigenvector coordinates of $\hat Q_1$ in~\cref{fig:exp1_results} (right), but one question remains. Why is the first column of the computed orthonormal basis such a good approximation to $v_1$?

The answer is that $\hat q_1^{(1)}$ is essentially the first column of $X_1$ after normalization, up to the unit round-off $u$. In particular, $\hat q_1^{(1)}$ is unaffected by the $u/d$ errors in $\hat Q_1$ caused by ill-conditioning in $X_1$ (see~\cref{lem:X1_condition}). These errors are concentrated in the later columns of $\hat Q_1$ because of the nested structure of Householder reflections (or Givens rotations) used to make $X_1$ upper triangular. We have that $x_1/\|x_1\|=v_1+\mathcal{O}(d)$ by~\cref{eqn:amplify_v1}, so we expect that $\hat q_1^{(1)}=v_1+\mathcal{O}(d)$ also, as observed in~\cref{fig:exp1_results}. 

Finally, if the orthogonal factor is computed with modified Gram-Schmidt instead of Householder reflections or Givens rotations, the columns of $\hat Q_1$ lose orthogonality in proportion to the condition number of the ill-conditioned basis $X_1$. The consequence of this is that the block $v_1^*(\hat Q_1)_{(2:m)}$ from~\cref{eqn:Q1_structure} may be as large as $u/d$ instead of $\mathcal{O}(d)$, even though $\hat q_1^{(1)}=v_1+\mathcal{O}(d)$. This may alter the order of magnitude of $\kappa(X_2T)$ when $d\ll\sqrt{u}$ (since then, $u/d\gg d)$ as the balance in~\cref{thm:twice-is-enough} is disrupted. In particular, twice may no longer be enough to correct ill-conditioning in $\hat X_2$. A similar effect is observed for non-normal matrices in~\cref{sec:non-normal_case} even when Householder reflections or Givens rotations are employed in the QR factorizations.

\section{Convergence and stability}\label{sec:PSI_analysis}

So far, our analysis of dangerous eigenvalues has focused on the conditioning of the iterates $X_1,X_2,\ldots$ in~\cref{eqn:ratSI} and the corresponding accuracy in the computed orthonormal bases. Indeed, this perspective explains the $u/d$ errors observed in the first iteration (see~\cref{fig:exp1_results}) and provides the essential insight into the restored accuracy observed in the second iteration (see~\cref{fig:exp2_results}). But we have not yet explained how the round-off errors incurred while applying the ill-conditioned rational filter enter the picture. Nor have we discussed how these round-off errors, together with the error in the computed orthonormal basis, accumulate during the iterations in~\cref{eqn:ratSI}.

To apply the rational filter $r(A)$ to an $n\times m$ matrix $Q$ in practice, one solves linear systems with a shift at each pole and takes a weighted average of the solutions:
\begin{equation}\label{eqn:filter2solves}
r(A)Q=\sum_{j=1}^\ell \omega_jX^{(j)}, \quad\text{where}\quad (z_jI-A)X^{(j)}=Q, \quad j=1,\ldots,\ell.
\end{equation}
If the linear systems are solved with a backward stable algorithm, then the computed solutions $\hat X^{(j)}$ satisfy, for each $j=1,\ldots,\ell$,
\begin{equation}\label{eqn:back_error}
(z_jI-A-\mathcal{E}_j)\hat X^{(j)}=Q, \qquad \|\mathcal{E}_j\|\leq\gamma\|A\| u.
\end{equation}
Here, $\mathcal{E}_j$ is the backward error and $\gamma$ is a constant, with modest dependence on $z_1,\ldots,z_\ell$ and the dimension of $Q$, such that $\gamma u\ll 1$ for typical situations.\footnote{This characterization can be modified to accommodate inexact solution techniques, such as iterative methods, but $\gamma$ may be much larger, depending on the stability properties of the particular numerical method~\cite[p. 339]{stewart2001matrix}.} 

Now, if we neglect errors made while forming the linear combination on the left-hand side of~\cref{eqn:filter2solves}, then the forward error in $r(A)Q$ can be written as\footnote{For expositional clarity, we neglect round-off errors accrued when forming the linear combination in the right-hand side of~\cref{eqn:filter2solves} to focus on the effect of the ill-conditioned linear systems. For typical choices of the weights and nodes in $r(A)$, this amounts to discarding a term on the order of $u$ relative to the largest column norm of the $\hat X^{(j)}$.}
\begin{equation}\label{eqn:forward_filter_error}
\hat X - r(A)Q = \sum_{k=1}^\ell \omega_j(z_jI-A)^{-1}\mathcal{E}_j\hat X^{(j)}, \qquad\text{where}\qquad \hat X=\sum_{k=1}^\ell \omega_j\hat X^{(j)}.
\end{equation}
Due to the appearance of $\mathcal{E}_j$, the terms in the left-hand sum are all on the order of $u$ except for the term corresponding to the pole near the dangerous eigenvalue, whose index we call $j=j_*$. In the dangerous term, $(z_{j_*}I-A)^{-1}$ amplifies the $v_1$ components in the columns of $\mathcal{E}_{j_*}$ by a factor of $1/d$. Similarly, the components of $v_1$ in the columns of $Q$ are amplified to order $1/d$ in the corresponding columns of $\hat X^{(j_*)}$ (this is made precise by expanding~\cref{eqn:back_error} in a Neumann series). Therefore, the relative errors in the columns of $\hat X$ are on the order of $u/d$.

Thus, every time $r(A)$ is applied in~\cref{eqn:ratSI}, relative errors of order $u/d$ are accrued in the columns of $\hat X_k$. On the one hand, our understanding of accuracy in the computed orthonormal basis $\hat Q_k$ (developed in~\cref{sec:dang_eigvals,sec:twice_is_enough}) remains intact, because perturbations of relative order $u/d$ to the columns of $X_k$ have little effect on the leading-order estimates for $\kappa(X_k)$. On the other hand, we may wonder: what effect do such perturbations have on ${\rm span}(X_k)$ and the geometric convergence implied in~\cref{thm:1step_standard}? 

Recent analyses of subspace iteration accelerated with a rational filter suggest that ${\rm span}(\hat X_k)$ tends to $\mathcal{V}$ geometrically at roughly the expected rate until a threshold of accuracy is reached, at which point convergence plateaus~\cite{peter2014feast}. This threshold is usually the same order of magnitude as the error accrued in the subspace at each iteration, i.e., in the columns of $\hat X_k$. Similar results have been derived for perturbations in the entries of the matrix $r(A)$ (this work does not consider filters explicitly)~\cite{saad2016analysis}. However, the evidence of the experiments in~\cref{fig:arnoldi_fsi,fig:contour_refine} and in~\cref{sec:twice_is_enough} indicates that errors in ${\rm span}(\hat X_k)$ caused by dangerous eigenvalues do not prevent the Rayleigh--Ritz procedure from finding vectors in ${\rm span}(\hat X_k)$ that approximate the target eigenvectors to unit round-off accuracy. We now show that errors in $\hat X_k$ caused by the dangerous eigenvalue do not lead to early stagnation or instability in the computed iterates. In the worst case, they may slow the geometric convergence rate by a factor of roughly $(1-u/d)^{-1}$. Moreover, the iteration is stable as long as the columns of the initial guess $Q_0$ are not too near to $\mathcal{V}^\perp$ (see~\cref{fig:stability}).

\subsection{One-step refinement bounds}\label{sec:perturbed_refinement}

The amplifying power of the dangerous eigenvalue leads to large relative errors in the columns of $\hat X_k$. However, the errors possess an important quality: the amplification is entirely in the direction of $v_1$ so that the relative errors in the unwanted direction are still small. To understand how these structured perturbations influence $\mathcal{\hat S}_k={\rm span}(\hat X_k)$, we gather the errors accrued during the $k$th iteration into a perturbation to the orthonormal basis for $\mathcal{\hat S}_{k-1}$ and construct a one-step refinement bound as in~\cref{thm:standardSI_bound}. Formulated precisely, we replace~\cref{eqn:ratSI} with the perturbed form
\begin{equation}\label{eqn:ratSI_perturbed}
\hat X_k=r(A)(Q_{k-1}'+R_k), \qquad Q_k'={\rm qf}(\hat X_k).
\end{equation}
Note that we include any errors in the computed orthonormal factor in $R_k$, placing the emphasis on $\mathcal{\hat S}_k={\rm span}(\hat X_k)={\rm span}(Q_k')$ rather than on ${\rm span}(\hat Q_k)$. This causes no difficulty since, as we know from~\cref{sec:twice_is_enough}, the error $\hat Q_k-Q_k'$ is on the order of $u$ for $k\geq 2$. Since $Q_k'$ is an orthonormal basis, $\mathcal{\hat S}_k$ and ${\rm span}(Q_k')$ only differ by a term not much larger than $u$.

To begin, we establish the form~\cref{eqn:ratSI_perturbed} by way of the residuals of the linear systems in~\cref{eqn:forward_filter_error} and study the structure of $R_k$. To measure the columns of $R_k$ relative to the columns of $\hat X_k$, it is convenient to apply the diagonal scaling
\begin{equation}\label{eqn:diag_scale}
C_k={\rm diag}(\|(\hat X_k)_i\|^{-1},\ldots,\|(\hat X_k)_m\|^{-1})/\sqrt{m},
\end{equation}
so that $\|\hat X_kC_k\|\leq 1$. We also need the majorization of the rational filter, denoted
\begin{equation}\label{eqn:mod_filter}
\tilde r(\lambda)=\sum_{j=1}^\ell |\omega_j||(z_j-\lambda)^{-1}|.
\end{equation}
As usual, $\tilde r(\Lambda_1)$ and $\tilde r(\Lambda_2)$ are the matrices when the function in~\cref{eqn:mod_filter} is applied to the diagonal matrices $\Lambda_1$ and $\Lambda_2$. Observe that $\|\tilde r(\Lambda_1)r(\Lambda_1)^{-1}\|=\mathcal{O}(1)$ as $d\rightarrow 0$, because the poles near the dangerous eigenvalue cancel.

\begin{lemma}\label{lem:filter_FBsplit}
Let normal $A\in\mathbb{C}^{n\times n}$ and $r:\Lambda\rightarrow\mathbb{C}$ satisfy~\cref{eqn:eigendecomposition} and~\cref{eqn:eig_index}, respectively. Given $Q\in\mathbb{C}^{n\times m}$, let $\hat X=\sum_{j=1}^\ell \omega_j\hat X^{(j)}$ with each $\hat X^{(j)}$ satisfying~\cref{eqn:back_error}. Then, there is an $R\in\mathbb{C}^{n\times m}$ such that $\hat X=r(A)(Q+R)$ and
\begin{equation}\label{eqn:filter_FBsplit}
\|P_\mathcal{V}R\|\leq\gamma_1\|A\| u/d \qquad\text{and}\qquad \|(I-P_\mathcal{V})r(A)RC\|\leq\gamma_2\|A\| u.
\end{equation}
Here, $\gamma_1=\gamma\,\|r(\Lambda_1)^{-1}\tilde r(\Lambda_1)\|$, $\gamma_2=\gamma\,\|\tilde r(\Lambda_2)\|$, and $C$ is the diagonal scaling in~\cref{eqn:diag_scale} (with index $k$ suppressed).
\end{lemma}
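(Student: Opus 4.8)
The plan is to extract $R$ from the backward errors $\mathcal{E}_j$ by writing $\hat X = r(A)Q + (\hat X - r(A)Q)$ and defining $R := r(A)^{-1}(\hat X - r(A)Q)$, then using~\cref{eqn:forward_filter_error} to express $R$ explicitly. From~\cref{eqn:forward_filter_error} we have $\hat X - r(A)Q = \sum_{j=1}^\ell \omega_j (z_jI-A)^{-1}\mathcal{E}_j \hat X^{(j)}$, so that $R = r(A)^{-1}\sum_j \omega_j (z_jI-A)^{-1}\mathcal{E}_j\hat X^{(j)}$. The key structural observation is that each column of $\hat X^{(j)}$ has the form $(z_jI-A-\mathcal{E}_j)^{-1}$ applied to a column of $Q$, and by the Neumann-series expansion of~\cref{eqn:back_error} the $v_1$-component of $\hat X^{(j_*)}$ is amplified to order $1/d$ while all other columns $\hat X^{(j)}$, $j\neq j_*$, remain $\mathcal{O}(1)$; moreover, scaling on the right by $C$ normalizes the columns so that $\|\hat X^{(j)} C\| = \mathcal{O}(1)$ for all $j$ (since $C$ divides by $\|(\hat X)_i\|$, which is $\Theta(1/d)$ in the dangerous column and $\Theta(1)$ otherwise — matching the amplification).

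First I would establish the first bound, $\|P_\mathcal{V} R\| \le \gamma_1 \|A\| u/d$. Here I apply $V_1^*$ on the left: using the spectral decomposition, $V_1^* R = r(\Lambda_1)^{-1} V_1^* \sum_j \omega_j (z_jI-A)^{-1}\mathcal{E}_j\hat X^{(j)} = r(\Lambda_1)^{-1}\sum_j \omega_j (z_jI-\Lambda_1)^{-1}V_1^*\mathcal{E}_j\hat X^{(j)}$. Taking norms, each $\|V_1^*\mathcal{E}_j\hat X^{(j)}\| \le \|\mathcal{E}_j\|\,\|\hat X^{(j)}\| \le \gamma\|A\|u \cdot \mathcal{O}(1/d)$ in the worst (dangerous) term — but actually the cleaner route is to bound $\|r(\Lambda_1)^{-1}\sum_j |\omega_j|\,|(z_jI-\Lambda_1)^{-1}|\,\| = \|r(\Lambda_1)^{-1}\tilde r(\Lambda_1)\|$ after factoring out the common $\|\mathcal{E}_j\|\le\gamma\|A\|u$ and absorbing the column normalization. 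The $1/d$ then comes from the fact that $\hat X^{(j)}$ (unnormalized) carries the $1/d$ amplification; it does not cancel against $r(\Lambda_1)^{-1}$ here because $P_\mathcal{V}R$ is measured in absolute (not column-scaled) terms. This gives $\gamma_1 = \gamma\|r(\Lambda_1)^{-1}\tilde r(\Lambda_1)\|$.

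Next I would establish the second bound, $\|(I-P_\mathcal{V})r(A)RC\| \le \gamma_2\|A\|u$. Here $(I-P_\mathcal{V})r(A)R = V_2 V_2^*(\hat X - r(A)Q) = V_2 r(\Lambda_2)V_2^*\sum_j\omega_j(z_jI-A)^{-1}\mathcal{E}_j\hat X^{(j)} \cdot (\text{no } r(\Lambda_1)^{-1})$ — wait, more directly $(I-P_\mathcal{V})(\hat X - r(A)Q) = \sum_j \omega_j V_2(z_jI-\Lambda_2)^{-1}V_2^*\mathcal{E}_j\hat X^{(j)}$. Multiplying on the right by $C$ normalizes the columns, killing the $1/d$ factor, and the poles $(z_j-\lambda)^{-1}$ for $\lambda\in\Lambda_2$ are all $\mathcal{O}(1)$ since no unwanted eigenvalue is near any pole; the majorized filter $\tilde r(\Lambda_2)$ bounds $\sum_j|\omega_j|\,|(z_jI-\Lambda_2)^{-1}|$, and each $\|\mathcal{E}_j\|\le\gamma\|A\|u$, giving $\gamma_2 = \gamma\|\tilde r(\Lambda_2)\|$. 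The main obstacle is bookkeeping the column-scaling $C$ carefully: one must verify that $\|\hat X^{(j)}C\| = \mathcal{O}(1)$ uniformly in $j$ (including $j=j_*$), which requires checking that the normalization constant $\|(\hat X)_i\|$ genuinely tracks the $\Theta(1/d)$ growth of the dangerous column and does not accidentally vanish — i.e., that $v_1$ is not nearly absent from $Q$. This is exactly the running assumption that the iterates are not near $\mathcal{V}^\perp$; under it, the Neumann-series estimate on $\hat X^{(j_*)}$ closes the argument.
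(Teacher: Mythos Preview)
Your approach is essentially the same as the paper's: define $R$ via $r(A)R=\hat X-r(A)Q$ using the forward-error identity, insert the spectral decomposition $A=V\Lambda V^*$ to write $R=Vr(\Lambda)^{-1}\sum_j\omega_j(z_jI-\Lambda)^{-1}V^*R^{(j)}$ with $R^{(j)}=\mathcal{E}_j\hat X^{(j)}$, and then read off $P_{\mathcal V}R$ and $(I-P_{\mathcal V})r(A)RC$ blockwise, bounding each with the majorized filter $\tilde r$. The paper is terser---it simply asserts $\|R^{(j)}C\|\le\gamma\|A\|u$ and $\|R^{(j)}\|\lesssim\gamma\|A\|u/d$ ``uniformly'' and leaves the direct calculation to the reader---whereas you correctly flag that the column-scaled bound hinges on $\|\hat X^{(j)}C\|=\mathcal O(1)$, which in turn needs the running non-degeneracy assumption that $Q$ is not nearly orthogonal to $v_1$; this is the only substantive point your write-up adds, and it is a fair one.
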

\begin{proof}
Because each $\hat X^{(j)}$ satisfies~\cref{eqn:back_error} and $\hat X=\sum_{j=1}^\ell\omega_j\hat X^{(j)}$, we collect like terms in~\eqref{eqn:forward_filter_error} and compute
\begin{equation}\label{eqn:multiple_poles_fp}
\hat X = r(A)Q + \sum_{j=1}^\ell \omega_j(z_jI-A)^{-1}R^{(j)},
\end{equation}
where $R^{(j)}=\mathcal{E}^{(j)}\hat X$. Note that $\|R^{(j)}C\|\leq\gamma\|A\|u$, for $j=1,\ldots,\ell$, by~\cref{eqn:back_error}. 

We compute $R$ directly by comparing~\cref{eqn:multiple_poles_fp} with $\hat X=r(A)(Q+R)$ and noting that we need $r(A)R = \sum_{j=1}^\ell \omega_j(z_jI-A)^{-1}R^{(j)}$. Inserting the eigenvalue decomposition $A=V\Lambda V^*$ into both sides and inverting $r(A)=Vr(\Lambda)V^*$, we obtain
\begin{equation}\label{eqn:filter_backErr}
R = Vr(\Lambda)^{-1}\left(\sum_{j=1}^\ell \omega_j(z_jI-\Lambda)^{-1}V^*R^{(j)}\right).
\end{equation}
Calculating $P_\mathcal{V}R$ and $(I-P_\mathcal{V})r(A)RC$ directly from~\cref{eqn:filter_backErr} and applying the backward error bounds in~\cref{eqn:back_error} to bound the residuals $\|R^{(j)}\|$ uniformly, we obtain the bounds in~\cref{eqn:filter_FBsplit}.
\end{proof}

\Cref{lem:filter_FBsplit} demonstrates that the perturbations $R_k$ in~\cref{eqn:ratSI_perturbed} capture the essential structure of the errors in $\hat X_k$. First, $R_k$ perturbs $Q_{k-1}'$ with relative magnitude $u/d$ and direction in the subspace $\mathcal{V}$. Second, $r(A)R_k$ perturbs the columns of $X_k$ with relative magnitude $u$ and direction in the subspace $\mathcal{V}^\perp$. We note that $\|V_2^*R_kC_k\|$ itself is not small when the filter is very good, i.e., close to unit-round off on the unwanted eigenvalues, as $\|r(\Lambda_2)^{-1}\tilde r(\Lambda_2)\|$ may be extremely large. However, the forward application of the filter cancels any large factors in $r(\Lambda_2)^{-1}$ exactly.

With~\cref{lem:filter_FBsplit} in hand, we can calculate a one-step refinement bound generalizing~\cref{thm:standardSI_bound} to the perturbed iteration in~\cref{eqn:ratSI_perturbed}. While the $u/d$ relative errors in $\hat X_k$ are felt in the refinement factor in~\cref{eqn:1step_perturbed}, they do not appear in the additive perturbation to $\tan\theta_1(\mathcal{\hat S}_k,\mathcal{V})$. This point is crucial because, as we show in~\cref{sec:no_stagnation}, the size of the additive term determines the threshold for stagnation in the worst\rr{-}case accumulation of errors.
\begin{theorem}\label{thm:1step_perturbed}
Let normal $A\in\mathbb{C}^{n\times n}$ and $r:\Lambda\rightarrow\mathbb{C}$ satisfy~\cref{eqn:eigendecomposition} and~\cref{eqn:eig_index}, respectively, and let $\mathcal{\hat S}_k={\rm span}(\hat X_k)$, with $\hat X_k$ defined in~\cref{eqn:ratSI_perturbed} and $R_k$ satisfying~\cref{eqn:filter_FBsplit}. If $\cos\theta_1(\mathcal{\hat S}_{k-1},\mathcal{V}) > \gamma_1\|A\|u/d$ and $\cos\theta_1(\mathcal{\hat S}_k,\mathcal{V})>0$, then
\begin{equation}\label{eqn:1step_perturbed}
\tan\theta_1(\mathcal{\hat S}_k,\mathcal{V})\leq\Big|\frac{r(\lambda_{m+1})}{r(\lambda_m)}\Big|\frac{\tan\theta_1(\mathcal{\hat S}_{k-1},\mathcal{V})}{1-\alpha_k}+\beta_k,
\end{equation}
where $\alpha_k\leq\gamma_1\|A\|u/(d\cos\theta_1(\mathcal{\hat S}_{k-1},\mathcal{V}))$ and $\beta_k\leq\gamma_2\|A\|\kappa(\hat X_kC_k)u/\cos\theta_1(\mathcal{\hat S}_k,\mathcal{V})$.
\end{theorem}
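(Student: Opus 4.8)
The plan is to mirror the direct calculation in the proof of \cref{thm:standardSI_bound}, but carry the perturbation $R_k$ through the tangent formula \cref{eqn:tan_PABS} and split its effect into the two pieces isolated by \cref{lem:filter_FBsplit}. First I would write $\hat X_k = r(A)(Q_{k-1}'+R_k)$ and compute, exactly as in \cref{eqn:compute_tangent}, the numerator $(I-P_\mathcal{V})\hat X_k = V_2 r(\Lambda_2)V_2^*(Q_{k-1}'+R_k)$ and the ``denominator'' $V_1^*\hat X_k = r(\Lambda_1)V_1^*(Q_{k-1}'+R_k)$. The tangent matrix is then $T(\hat X_k, V_1) = (I-P_\mathcal{V})\hat X_k\,(V_1^*\hat X_k)^+$, and I want to bound its norm.

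The key algebraic manoeuvre is to factor out the unperturbed quantities. For the denominator, write $V_1^*\hat X_k = r(\Lambda_1)\big(V_1^*Q_{k-1}' + V_1^*R_k\big) = r(\Lambda_1)V_1^*Q_{k-1}'\big(I + (V_1^*Q_{k-1}')^{-1}V_1^*R_k\big)$, where $V_1^*Q_{k-1}'$ is invertible precisely because $\cos\theta_1(\mathcal{\hat S}_{k-1},\mathcal{V})>0$; note $\|(V_1^*Q_{k-1}')^{-1}\| = 1/\cos\theta_1(\mathcal{\hat S}_{k-1},\mathcal{V})$ and $\|V_1^*R_k\| = \|P_\mathcal{V}R_k\|\le\gamma_1\|A\|u/d$ by the first bound in \cref{eqn:filter_FBsplit}. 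The hypothesis $\cos\theta_1(\mathcal{\hat S}_{k-1},\mathcal{V})>\gamma_1\|A\|u/d$ is exactly what guarantees $\|(V_1^*Q_{k-1}')^{-1}V_1^*R_k\| \le \alpha_k < 1$, so the inner factor is invertible with inverse norm at most $1/(1-\alpha_k)$. Inverting in the reverse order then produces the factor $1/(1-\alpha_k)$ multiplying the ``clean'' term. For the numerator, split $(I-P_\mathcal{V})\hat X_k = V_2 r(\Lambda_2)V_2^*Q_{k-1}' + (I-P_\mathcal{V})r(A)R_k$; the first summand, after pairing with $Q_{k-1}'(V_1^*Q_{k-1}')^{-1}r(\Lambda_1)^{-1}$ and the inner-factor inverse, reproduces the bound $|r(\lambda_{m+1})/r(\lambda_m)|\cdot\tan\theta_1(\mathcal{\hat S}_{k-1},\mathcal{V})/(1-\alpha_k)$ exactly as in \cref{thm:standardSI_bound}. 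The second summand is the additive term: using the second bound in \cref{eqn:filter_FBsplit}, $\|(I-P_\mathcal{V})r(A)R_k\| \le \|(I-P_\mathcal{V})r(A)R_kC_k\|\cdot\|C_k^{-1}\| \le \gamma_2\|A\|u\cdot\|C_k^{-1}\|$, and $\|C_k^{-1}\| = \sqrt{m}\max_i\|(\hat X_k)_i\| \le \|\hat X_k\|\cdot(\text{const})$; dividing by $\sigma_m(V_1^*\hat X_k)$ — which, after the column scaling $C_k$, is controlled by $\sigma_m(\hat X_kC_k)\cos\theta_1(\mathcal{\hat S}_k,\mathcal{V})$ — turns the ratio $\|\hat X_kC_k\|/\sigma_m(\hat X_kC_k) = \kappa(\hat X_kC_k)$ into the advertised $\beta_k$.

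I would then collect the two contributions via the triangle inequality on $\|T(\hat X_k,V_1)\|$, and use $\tan\theta_1(\mathcal{\hat S}_k,\mathcal{V}) = \|T(\hat X_k,V_1)\|$ (valid since $\cos\theta_1(\mathcal{\hat S}_k,\mathcal{V})>0$, so all tangents are finite) to conclude \cref{eqn:1step_perturbed}. The main obstacle I anticipate is bookkeeping the column-scaling $C_k$ cleanly: the additive perturbation is naturally small only \emph{relative} to the columns of $\hat X_k$, so the estimate of the denominator $\sigma_m(V_1^*\hat X_k)$ must be carried out in the scaled variables $\hat X_kC_k$ to avoid spurious factors of $1/d$ from the first column's magnitude, and one has to check that the $1/(1-\alpha_k)$ factor from the inner-factor inverse does not also contaminate $\beta_k$ (it can be absorbed, at leading order, or tracked explicitly). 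A secondary nuisance is making sure the pseudoinverse manipulations are legitimate, i.e. that $V_1^*\hat X_k$ really has full row rank $m$ under the stated hypotheses, which again follows from the two positivity/largeness assumptions on the cosines.
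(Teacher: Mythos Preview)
Your proposal is correct and follows essentially the same route as the paper: split the tangent matrix into the $Q_{k-1}'$ and $R_k$ contributions, handle the first via a Neumann expansion of $(V_1^*(Q_{k-1}'+R_k))^{-1}$ to produce the $1/(1-\alpha_k)$ factor, and handle the second via the column-scaled bound from \cref{lem:filter_FBsplit} together with an estimate of $\|(V_1^*\hat X_kC_k)^{-1}\|$ in terms of $\kappa(\hat X_kC_k)$ and $\cos\theta_1(\mathcal{\hat S}_k,\mathcal{V})$. The paper resolves both of the bookkeeping worries you flag by working with $T(\hat X_kC_k,V_1)$ from the outset and by bounding the common inverse $(V_1^*\hat X_kC_k)^{-1}$ \emph{differently} for the two pieces---via the QR factorization of $\hat X_kC_k$ for the additive term (so no $1/(1-\alpha_k)$ appears in $\beta_k$) and via the Neumann series for the multiplicative term.
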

\begin{proof}
Calculating directly as in the proof of~\cref{thm:standardSI_bound}, we have that
\begin{equation}\label{eqn:perturbed_tan}
T(\hat X_kC_k,V_1)=(I-P_\mathcal{V})r(A)(Q_{k-1}'+R_k)C_k(V_1^*\hat X_kC_k)^{-1}.
\end{equation}
We proceed by bounding the two terms in~\cref{eqn:perturbed_tan} corresponding to $Q_{k-1}'$ and $R_k$. By~\cref{lem:filter_FBsplit}, $\|(I-P_\mathcal{V})r(A)R_kC_k\|\leq\gamma_2\|A\|u$. If $\hat X_kC_k=Q_k'S_k$ is an economy-sized QR factorization, then the singular values of $S_k$ and $\hat X_kC_k$ coincide, and
\begin{equation}
\|(V_1^*\hat X_kC_k)^{-1}\|=\|S_k^{-1}(V_1^*Q_k')^{-1}\|\leq\sigma_m(\hat X_kC_k)/\cos\theta_1(\mathcal{\hat S}_k,\mathcal{V}).
\end{equation}
Since $\|\hat X_kC_k\|\leq 1$, we conclude that $\|(I-P_\mathcal{V})r(A)R_kC_k(V_1^*\hat X_kC_k)^{-1}\|\leq\beta_k$.

Now, rewrite $(V_1^*\hat X_k)^{-1}=(V_1^*(Q_{k-1}'+R_k))^{-1}r(\Lambda_1)^{-1}$. Invoking the hypothesis on $\cos\theta_1(\mathcal{\hat S}_{k-1},\mathcal{V})$, which implies that $\|(V_1^*Q_{k-1}')^{-1}V_1^*R_k\|<1$, we expand
$$
(V_1^*(Q_{k-1}'+R_k))^{-1}=\left(I+\sum_{k=1}^\infty (V_1^*Q_{k-1}')^{-k}(V_1^*R_k)^k\right)(V_1^*Q_{k-1}')^{-1}.
$$
Since $T(Q_{k-1}',V_1)=(I-P_\mathcal{V})Q_{k-1}'(V_1^*Q_{k-1}')^{-1}$, we calculate that
$$
\begin{aligned}
(I-P_\mathcal{V})r(A)Q_{k-1}'(V_1^*\hat X_k)^{-1} = r(A)(I-P_\mathcal{V})Q_{k-1}'(V_1^*(Q_{k-1}'+R_k))^{-1}r(\Lambda_1)^{-1} \\
=r(A)T(Q_{k-1}',V_1)\left(I+\sum_{k=1}^\infty (V_1^*Q_{k-1}')^{-k+1}(V_1^*R_k)^k(V_1^*Q_{k-1}')^{-1}\right)r(\Lambda_1)^{-1}.
\end{aligned}
$$
Taking norms, applying the bound for $\|P_\mathcal{V}R_k\|=\|V_1^*R_k\|$ from~\cref{lem:filter_FBsplit} along with the identity $\|(V_1^*Q_{k-1}')^{-1}\|=1/\cos\theta_1(\mathcal{\hat S}_{k-1},\mathcal{V})$ from~\cref{def:PABS}, and summing the resulting geometric series, we obtain the bound
$$
\|(I-P_\mathcal{V})r(A)Q_{k-1}'(V_1^*\hat X_k)^{-1}\|\leq \|r(\Lambda_2)\|\|r(\Lambda_1)^{-1}\|\frac{\tan\theta_1(\mathcal{\hat S}_{k-1},\mathcal{V})}{1-\alpha_k}.
$$
Noting that $\|r(\Lambda_2)\|=|r(\lambda_{m+1})|$, $\|r(\Lambda_1)^{-1}\|=|r(\lambda_m)|^{-1}$, and collecting the bounds for the two terms in~\cref{eqn:perturbed_tan} establishes~\cref{eqn:1step_perturbed}.
\end{proof}

The significance of~\cref{thm:1step_perturbed} is that the errors in $\hat X_k$ that lie in the target subspace $\mathcal{V}$ impact only the refinement rate, and do not contribute to the additive term $\beta_k$ in~\cref{eqn:1step_perturbed}. This worst\rr{-}case scenario occurs over one iteration only when the perturbations are aligned to maximally cancel the components of $\mathcal{V}$ present in the basis $Q_{k-1}'$. In fact, such errors are just as likely to align perfectly with the $\mathcal{V}$ components of $Q_{k-1}'$ and improve the refinement rate by $(1+\alpha_k)^{-1}$, so the impact on the geometric convergence rate implied by~\cref{eqn:1step_perturbed} is probably not observed in practice. 

On the other hand, the errors in $\hat X_k$ that lie in $\mathcal{V}^\perp$ degrade the expected refinement through the additive term $\beta_k$ and, due to orthogonality, have a tangible effect on the convergence of subspace iteration in floating-point arithmetic. Note that the magnitude of $\beta_k$ is proportional to the condition number of the basis $\hat X_k$ after column scaling. From~\cref{sec:dang_eigvals,sec:twice_is_enough}, we know that $\beta_1\approx u/d$ and $\beta_k\approx u$ for $k\geq 2$, provided that $\mathcal{\hat S}_k$ and $\mathcal{V}$ do not become too close to orthogonal during the iteration.

\subsection{Stability and stagnation}\label{sec:no_stagnation}

According to~\cref{thm:1step_perturbed}, the search subspace is refined by a factor comparable to~\cref{thm:1step_standard}, up to the size of the errors $\beta_k$ introduced in $\mathcal{V}^\perp$, at each iteration. As we accumulate iterations, the errors in $\mathcal{V}^\perp$ are filtered out by $r(A)$ and, in the apt words of the authors of~\cite{peter2014feast}, ``the dominant error term is the one most recently introduced." As long as $\cos\theta_1(\mathcal{\hat S}_k,\mathcal{V})$ is bounded sufficiently far from zero for $k\geq 0$, the sequences $\alpha_k$ and $\beta_k$ remain stable at the order of $u/d$ and $u$ (respectively) after the first iteration. In this case, we expect that $\mathcal{\hat S}_k$ converges geometrically toward $\mathcal{V}$ until a threshold of about $u$ is reached, after which convergence stagnates. This is what we observe in~\cref{fig:arnoldi_fsi,fig:contour_refine}.

If $\cos\theta_1(\mathcal{\hat S}_k,\mathcal{V})$ does become very small at some step in the iteration, then the one-step refinement bound may not imply any refinement in the search subspace at all: the iteration in~\cref{eqn:ratSI_perturbed} is potentially unstable. With a slight change of perspective, we now characterize the behavior of the iterates in~\cref{eqn:ratSI_perturbed} as $k\rightarrow\infty$, addressing both the stability and the threshold for stagnation in subspace refinement.

Let us introduce the constants $\rho=|r(\lambda_{m+1})|/|r(\lambda_m)|$, $\epsilon_1=\gamma_1\|A\|u/d$, and $\epsilon_2=\gamma_2\|A\|\hat Mu$, where $\hat M$ is an $\mathcal{O}(1)$ uniform bound on $\kappa(X_kC_k)$ for $k\geq 2$ (i.e., from ~\cref{thm:twice-is-enough}). Consider the function
\begin{equation}\label{eqn:discrete_map}
\Phi(\eta)=\frac{1}{1-\epsilon_2}\left(\frac{\rho\,\eta}{1-\epsilon_1(1+\eta)}+\epsilon_2\right).
\end{equation}
Because $1/\cos\theta\leq 1+\tan\theta$ when $0\leq\theta\leq\pi/2$, we can rewrite~\cref{thm:1step_perturbed} in the form $\tan\theta_1(\mathcal{\hat S}_k,\mathcal{V})\leq \Phi(\tan\theta_1(\mathcal{\hat S}_{k-1},\mathcal{V}))$ when $k\geq 2$. We can understand the ``worst-case" behavior of subspace iteration by studying the trajectory of $\tan\theta_1(\mathcal{\hat S}_0,\mathcal{V})$, for some initial subspace $\mathcal{\hat S}_0$, obtained by iterating the map $\Phi$.

\begin{figure}[!tbp]
  \centering
  \begin{minipage}[b]{0.48\textwidth}
    \begin{overpic}[width=\textwidth]{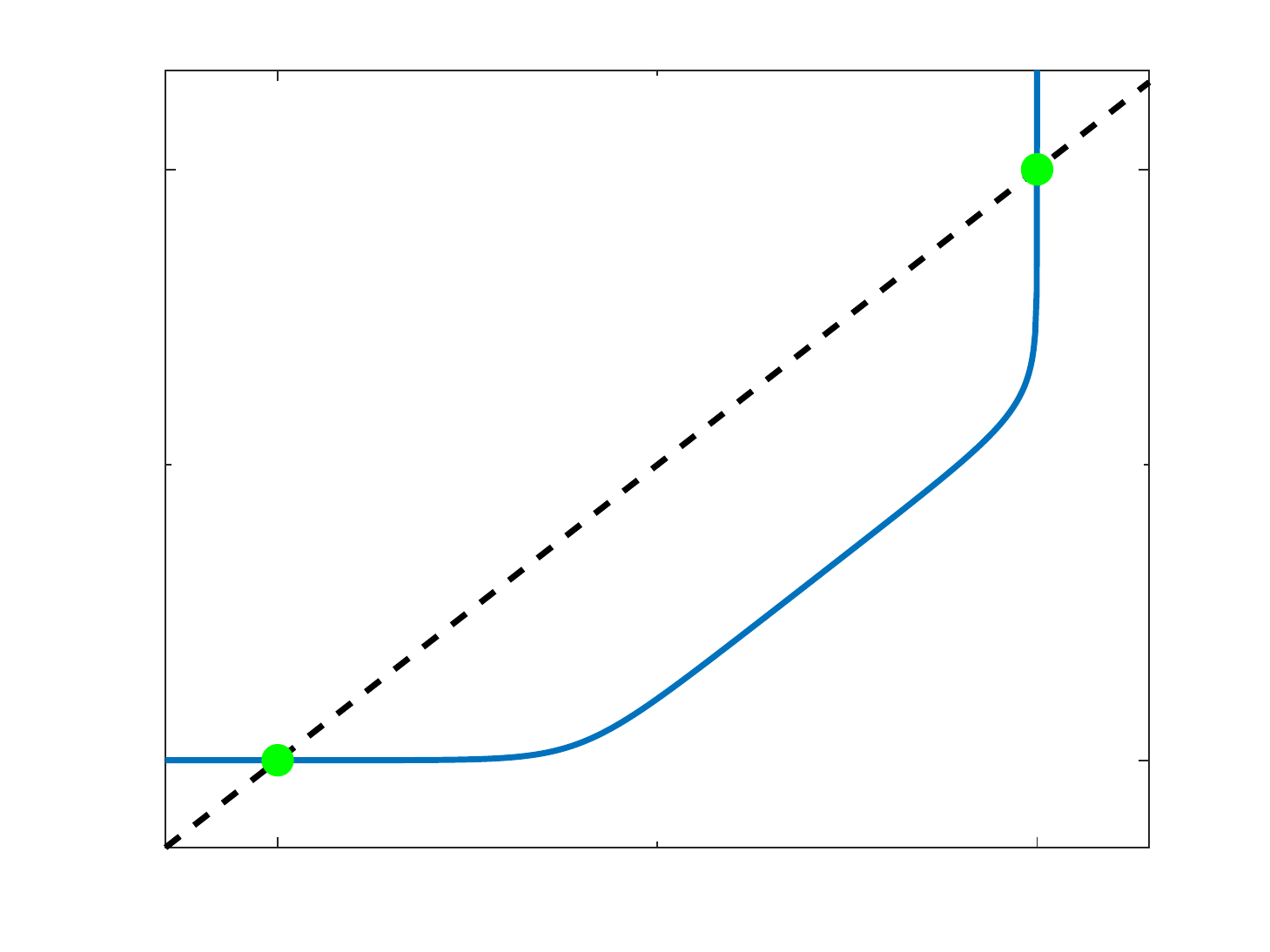}
     \put (49,73) {$\displaystyle \Phi(\eta)$}
     \put (50,0) {$\displaystyle \eta$}
     \put (20,3) {$\displaystyle \eta_-$}
     \put (80,3) {$\displaystyle \eta_+$}
     \put (5,60) {$\displaystyle \eta_+$}
     \put (5,15) {$\displaystyle \eta_-$}
     \end{overpic}
  \end{minipage}
    \hfill
  \begin{minipage}[b]{0.48\textwidth}
    \begin{overpic}[width=\textwidth]{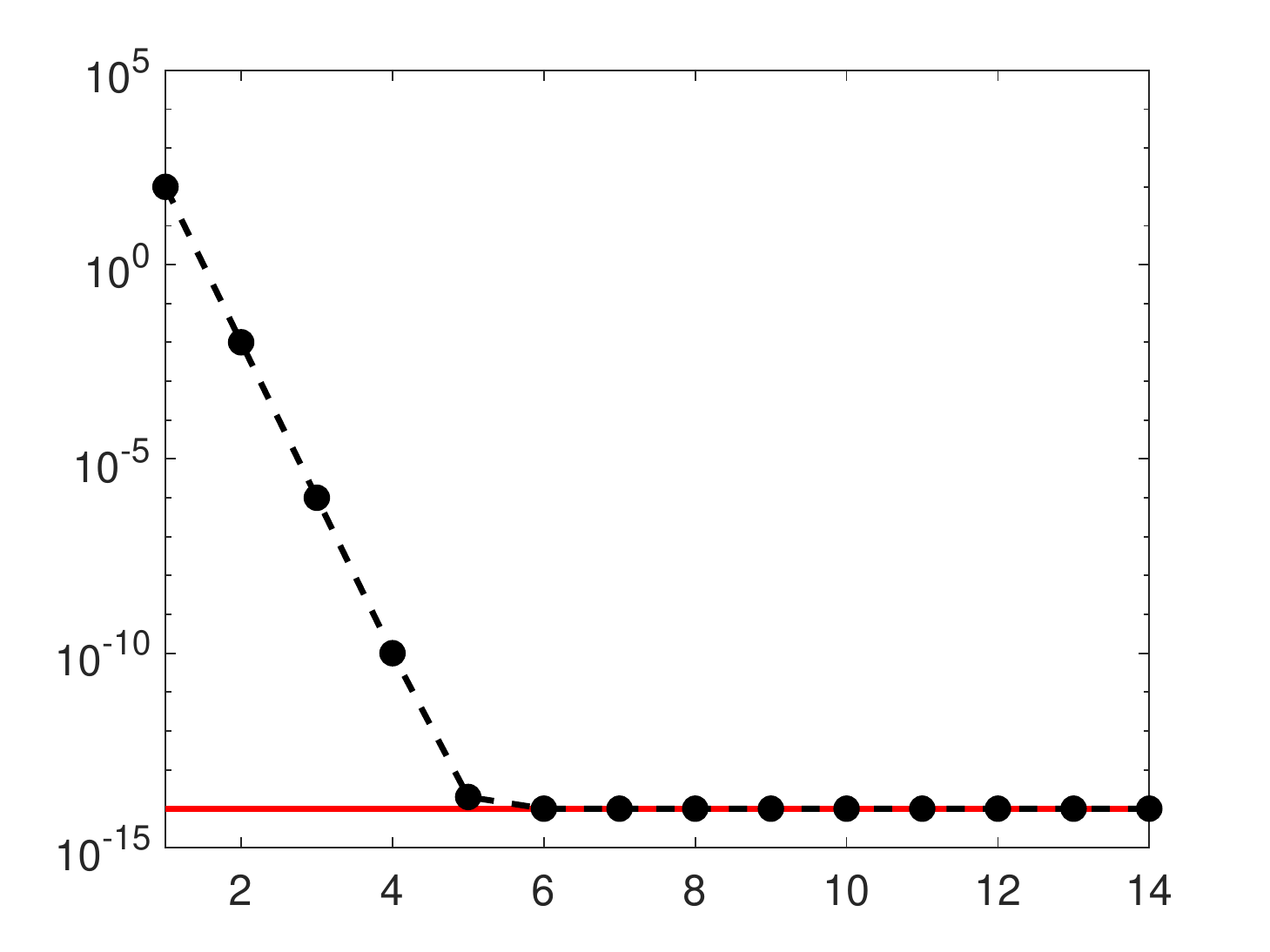}
   	 \put (47,73) {$\displaystyle \phi_k(\eta_0)$}
     \put (50,-2) {$\displaystyle k$}
     \put (19,14) {$\displaystyle \eta_-$}
     \end{overpic}
  \end{minipage}
  \caption{
The dynamics of perturbed subspace iteration from~\cref{eqn:discrete_dynamics}. In the left panel, the solid line is the graph of $\Phi(\eta)$ and its fixed points (green circles) are marked at the intersections $\Phi(\eta_\pm)=\eta_\pm$. If $\tan\theta_1(\mathcal{\hat S}_0,\mathcal{V})$ falls between the two fixed points (green circles), then the $\tan\theta_1(\mathcal{\hat S}_k,\mathcal{V})$ must converge geometrically to a threshold near the lower fixed point (see~\cref{thm:perturbed_convergence}). In the right panel, the iterated map $\phi_k(\eta_0)$ (circles) is compared with the upper bound in~\cref{thm:perturbed_convergence} (dashed line) for $k=0,\ldots,14$. For this experiment, $\eta_0=100$, $\epsilon_1=10^{-5}$, $\epsilon_2=10^{-14}$, and $\rho=10^{-4}$.
\label{fig:stability}}
\end{figure}

Given $\eta_0>0$, let $\phi_k(\eta_0)$ denote the $k$-fold iteration of the map $\Phi$ on the point $\eta_0$, so that (letting $f\circ g$ denote the composition of two functions) we have
\begin{equation}\label{eqn:discrete_dynamics}
\phi_k(\eta_0)=\underbrace{\Phi\circ\cdots\circ\Phi}_k(\eta_0).
\end{equation}
We call $\eta_*$ a fixed point of $\Phi$ if $\Phi(\eta_*)=\eta_*$ and say that $\eta_*$ is monotone attracting for $\Omega\subset[0,\infty)$ if $\phi_k(\eta)\rightarrow\eta_*$ monotonically as $k\rightarrow\infty$ for all $\eta\in\Omega$. After applying $\Phi$ to $\tan\theta_1(\mathcal{\hat S}_0,\mathcal{V})$ $k$ times, we see that (since $\Phi(\eta)$ is non-decreasing on $0\leq\eta<-1+1/\epsilon_1$)
\begin{equation}\label{eqn:dynamics-convergence_link}
\tan\theta_1(\mathcal{\hat S}_k,\mathcal{V})\leq \phi_k\left(\tan\theta_1(\mathcal{\hat S}_0,\mathcal{V})\right),
\end{equation}
as long as $\phi_j(\tan\theta_1(\mathcal{\hat S}_0,\mathcal{V}))<-1+1/\epsilon$ for each $j\geq 1$. Consequently, the fixed points of $\Phi$ and their attracting sets provide insight into the behavior of $\tan\theta_1(\mathcal{\hat S}_k,\mathcal{V})$ in the limit $k\rightarrow\infty$, that is, about the convergence and stability of the iteration in~\cref{eqn:ratSI_perturbed}.

\begin{lemma}\label{lem:fixed_points}
Define the map $\Phi:[0,-1+1/\epsilon_1)\rightarrow[0,\infty)$ as in~\cref{eqn:discrete_map}, with constants $\rho>0$ and $\epsilon_1$, $\epsilon_2\geq 0$. Let
$$
\delta=\left(1-\rho(1-\epsilon_2)^{-1}-\epsilon_1(1-\epsilon_2)\right)/(2\epsilon_1), \qquad\text{and}\qquad \sigma=\epsilon_2(1-\epsilon_1)/\epsilon_1.
$$
If $\delta^2>\sigma$, then $\Phi$ has precisely two fixed points, given by $\eta_\pm = \delta\pm\sqrt{\delta^2-\sigma}$. Moreover, the fixed point $\eta_-$ is monotone attracting on $[0,\eta_+)$.
\end{lemma}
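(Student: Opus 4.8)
The plan is to analyze $\Phi$ as a rational function of $\eta$ and locate its fixed points by reducing the fixed-point equation $\Phi(\eta)=\eta$ to a quadratic. First I would clear denominators: writing out $\Phi(\eta_*)=\eta_*$ gives
\begin{equation*}
\rho\,\eta_* + \epsilon_2\bigl(1-\epsilon_1(1+\eta_*)\bigr) = (1-\epsilon_2)\bigl(1-\epsilon_1(1+\eta_*)\bigr)\eta_*,
\end{equation*}
valid on the stated domain where the inner denominator $1-\epsilon_1(1+\eta)$ is positive. Expanding and collecting powers of $\eta_*$ yields a quadratic $\epsilon_1(1-\epsilon_2)\eta_*^2 - 2\epsilon_1(1-\epsilon_2)\delta\,\eta_* + \epsilon_2(1-\epsilon_1) = 0$ after matching the definitions of $\delta$ and $\sigma$; dividing through by $\epsilon_1(1-\epsilon_2)$ gives $\eta_*^2 - 2\delta\eta_* + \sigma = 0$, whose roots are exactly $\eta_\pm = \delta \pm \sqrt{\delta^2-\sigma}$. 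The hypothesis $\delta^2 > \sigma$ guarantees the roots are real and distinct; I would also check that both lie in the domain $[0,-1+1/\epsilon_1)$ — nonnegativity follows since $\sigma\ge 0$ forces $\delta>0$ and the product of roots $\sigma\ge0$ with sum $2\delta>0$, and the upper bound follows because $\Phi$ maps into $[0,\infty)$ and blows up to $+\infty$ as $\eta\to(-1+1/\epsilon_1)^-$, so a fixed point strictly below that pole must exist and, by the quadratic count, both do (I would verify $-1+1/\epsilon_1$ is not itself a root of the quadratic).

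For the attracting claim, the key structural facts are that on $[0,-1+1/\epsilon_1)$ the map $\Phi$ is continuous, strictly increasing (its derivative is manifestly positive there, since numerator and denominator of the derivative are positive), convex, and satisfies $\Phi(0) = \epsilon_2/(1-\epsilon_2) > 0$. A strictly increasing, continuous map with two fixed points $\eta_- < \eta_+$ behaves in the standard way: on $[0,\eta_-)$ one has $\Phi(\eta) > \eta$ (since $\Phi(0)>0$ and there is no fixed point below $\eta_-$, the continuous function $\Phi(\eta)-\eta$ keeps one sign), so iterates increase and, being bounded above by $\eta_-$ (monotonicity of $\Phi$ preserves $\eta<\eta_-\Rightarrow\Phi(\eta)<\Phi(\eta_-)=\eta_-$), converge monotonically up to $\eta_-$. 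On $(\eta_-,\eta_+)$ one has $\Phi(\eta)<\eta$ (again no fixed point strictly between, and one can test a single point or use convexity), and monotonicity gives $\Phi(\eta)>\Phi(\eta_-)=\eta_-$, so iterates decrease monotonically and stay above $\eta_-$, hence converge down to $\eta_-$. In both cases the limit is a fixed point in the closed interval, which can only be $\eta_-$. At $\eta=\eta_-$ itself the sequence is constant. This establishes that $\eta_-$ is monotone attracting on $[0,\eta_+)$.

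I would present the computation of the fixed points first (a short algebraic verification), then state the monotonicity/convexity/positivity properties of $\Phi$ as elementary observations, and finally run the standard one-dimensional dynamics argument split into the two subintervals $[0,\eta_-)$ and $(\eta_-,\eta_+)$. The main obstacle is not conceptual but bookkeeping: one must be careful that all iterates stay inside the domain $[0,-1+1/\epsilon_1)$ so that $\Phi$ is actually defined and increasing along the whole orbit — this is where the bound $\eta_+ < -1+1/\epsilon_1$ is used, together with the fact that on $[0,\eta_+)$ the iterates never exceed $\eta_+$ (they are trapped below it by monotonicity of $\Phi$ and $\Phi(\eta_+)=\eta_+$). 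Establishing that trapping, and the sign of $\Phi(\eta)-\eta$ on each subinterval directly from $\Phi(0)>0$ plus the two-root count, is the part that needs the most care; everything else is routine.
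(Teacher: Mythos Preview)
Your proposal is correct and follows essentially the same route as the paper: clear denominators in $\Phi(\eta_*)=\eta_*$ to obtain a quadratic whose roots are $\eta_\pm$, then use the sign of $\Phi(\eta)-\eta$ on the two subintervals together with the monotonicity of $\Phi$ to deduce that $\eta_-$ is monotone attracting on $[0,\eta_+)$. Your version adds a few extra checks (domain membership of the roots, convexity) that the paper leaves implicit, but the core argument is the same.
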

\begin{proof}
Starting from the fixed point equation $\Phi(\eta_*)=\eta_*$, we multiply through by $1-\epsilon_1(1+\eta_*)$ to obtain the quadratic equation
\begin{equation}\label{eqn:quadratic_eqn}
\epsilon_1\eta_*^2 - [1-\rho(1-\epsilon_2)^{-1} - \epsilon_1(1-\epsilon_2)]\eta_* + \epsilon_2(1-\epsilon_1)=0.
\end{equation}
Applying the quadratic formula for the roots and rewriting in terms of $\delta$ and $\sigma$ concludes the fixed-point calculation. Now, the quadratic on the left-hand side of~\cref{eqn:quadratic_eqn} is negative between the roots, which implies that $\Phi(\eta)>0$ for $0<\eta<\eta_-$ and $\Phi(\eta)<0$ for $\eta_-<\eta<\eta_+$. The change of sign at each fixed point implies that $\eta_-$ attracts nearby points and that $\eta_+$ repels nearby points. Because $\Phi$ is non-decreasing and has no other fixed points, we conclude that $\eta_-$ is monotone attracting on $[0,\eta_+)$.
\end{proof}

\Cref{lem:fixed_points} shows that if $\tan\theta_1(\mathcal{\hat S}_0,\mathcal{V})< \eta_+$, then $\tan\theta_1(\mathcal{\hat S}_k,\mathcal{V})$ must eventually be on the order of $\eta_-$ or better for all sufficiently large $k$. Recalling that the constants $\epsilon_1$ and $\epsilon_2$ are on the order of $u/d$ and $u$, respectively, and that $\rho$ is the filtered spectral ratio, we estimate the size of the fixed points to be
\begin{equation}
\eta_- \approx \frac{\epsilon_2}{1-\rho}, \qquad\text{and}\qquad \eta_+ \approx -1 +\frac{1-\rho}{\epsilon_1}.
\end{equation}
Crucially, the lower fixed point $\eta_-$ is on the order of $u$, not $u/d$. Having established stability properties of the perturbed iteration in~\cref{eqn:ratSI_perturbed}, we can now estimate the rate of convergence to the fixed point $\eta_-$.

\begin{theorem}\label{thm:perturbed_convergence}
Define the map $\Phi:[0,-1+1/\epsilon_1)\rightarrow[0,\infty)$ as in~\cref{eqn:discrete_map}, with constants $\rho>0$ and $\epsilon_1$, $\epsilon_2\geq 0$. Let $\phi_k$ denote the $k$-fold iteration of $\Phi$ as in~\cref{eqn:discrete_dynamics}. If $\Phi$ satisfies the hypotheses of~\cref{lem:fixed_points}, then given $0\leq\eta_0<\eta_+$, it holds that
\begin{equation}\label{eqn:perturbed_convergence}
\phi_k(\eta_0)\leq\tilde\rho^k\eta_0 + \tilde\epsilon_2(1-\tilde\rho)^{-1}, \qquad k\geq 1.
\end{equation}
Here, $\tilde\rho=\rho(1-\epsilon_2)^{-1}(1-\epsilon_1(1+\eta_0))^{-1}$ and $\tilde\epsilon_2=\epsilon_2(1-\epsilon_2)^{-1}$.
\end{theorem}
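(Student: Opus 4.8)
The plan is to establish the bound in~\cref{eqn:perturbed_convergence} by a direct inductive comparison of $\phi_k(\eta_0)$ with a simpler linear (affine) recursion whose closed form is exactly the right-hand side. The key observation is that, under the hypothesis $0\le\eta_0<\eta_+$,~\cref{lem:fixed_points} guarantees that $\phi_j(\eta_0)$ stays in $[0,\eta_+)\subset[0,-1+1/\epsilon_1)$ for all $j$, so every iterate is a legitimate argument of $\Phi$ and, moreover, $\Phi$ is non-decreasing there. Because $\phi_j(\eta_0)\le\eta_0$ for all $j\ge 0$ (the sequence is monotonically decreasing toward $\eta_-$ once it starts below $\eta_+$, again by~\cref{lem:fixed_points}), we can replace the $\eta$-dependent denominator $1-\epsilon_1(1+\phi_{j}(\eta_0))$ appearing inside one application of $\Phi$ by the uniform, $j$-independent lower-bounding quantity $1-\epsilon_1(1+\eta_0)$, which is exactly the denominator defining $\tilde\rho$. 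This turns the genuinely nonlinear map $\Phi$ into the affine upper bound
\begin{equation*}
\Phi(\eta)\le \tilde\rho\,\eta + \tilde\epsilon_2 \qquad\text{for } 0\le\eta\le\eta_0,
\end{equation*}
where I have also used $\epsilon_2(1-\epsilon_2)^{-1}=\tilde\epsilon_2$ to rewrite the additive constant.

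Granting that affine bound, the rest is a routine induction. First I would check that $\tilde\rho<1$: this follows because $\eta_0<\eta_+$ forces $\rho(1-\epsilon_2)^{-1}<1-\epsilon_1(1+\eta_0)$ — indeed this is precisely the statement that $\Phi(\eta_0)<\eta_0$, which is what keeps the orbit decreasing — so dividing gives $\tilde\rho<1$. Then, writing $L(\eta)=\tilde\rho\eta+\tilde\epsilon_2$ for the affine majorant, monotonicity of both $\Phi$ and $L$ on $[0,\eta_0]$ lets me argue by induction that $\phi_k(\eta_0)\le L^k(\eta_0)$: the base case $k=1$ is the affine bound at $\eta=\eta_0$; for the step, $\phi_{k+1}(\eta_0)=\Phi(\phi_k(\eta_0))\le \Phi(L^k(\eta_0))\le L(L^k(\eta_0))=L^{k+1}(\eta_0)$, where the first inequality uses $\phi_k(\eta_0)\le L^k(\eta_0)\le\eta_0$ together with monotonicity of $\Phi$, and the second uses the affine bound (valid since $L^k(\eta_0)\in[0,\eta_0]$). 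Finally, the closed form of the affine iteration is the geometric series
\begin{equation*}
L^k(\eta_0)=\tilde\rho^k\eta_0 + \tilde\epsilon_2\sum_{i=0}^{k-1}\tilde\rho^i \le \tilde\rho^k\eta_0 + \frac{\tilde\epsilon_2}{1-\tilde\rho},
\end{equation*}
which is exactly~\cref{eqn:perturbed_convergence}.

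\textbf{Main obstacle.} The one step that needs care, rather than mere bookkeeping, is justifying that the orbit never escapes the admissible interval and stays below $\eta_0$ — i.e.\ that $\phi_j(\eta_0)\le\eta_0$ and $\phi_j(\eta_0)<-1+1/\epsilon_1$ for every $j$. Both facts are consequences of~\cref{lem:fixed_points} (monotone attraction of $\eta_-$ on $[0,\eta_+)$ and the location of $\eta_+$ strictly below the pole $-1+1/\epsilon_1$ of $\Phi$), but I should state explicitly why $\eta_+<-1+1/\epsilon_1$: evaluating the quadratic~\cref{eqn:quadratic_eqn} at $\eta=-1+1/\epsilon_1$ gives the positive value $\epsilon_2(1-\epsilon_1)+\rho(1-\epsilon_2)^{-1}\epsilon_1^{-1}>0$ while the leading coefficient $\epsilon_1>0$, and since the larger root is $\eta_+$, the quadratic is positive to the right of $\eta_+$; hence $-1+1/\epsilon_1$ lies strictly beyond $\eta_+$ unless $\epsilon_1=0$, in which case $\Phi$ has no pole and the claim is vacuous. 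With the orbit safely confined, every use of monotonicity of $\Phi$ above is legitimate and the induction closes. Everything else — rewriting $\epsilon_2(1-\epsilon_2)^{-1}$, summing the geometric series, verifying $\tilde\rho<1$ — is elementary algebra that I would dispatch in a line or two.
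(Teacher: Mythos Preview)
Your approach is essentially the same as the paper's: both bound the nonlinear map $\Phi$ from above by the affine map $L(\eta)=\tilde\rho\eta+\tilde\epsilon_2$ on the orbit (using that $\eta_{k-1}\le\eta_0$ by the monotone attraction in \cref{lem:fixed_points}), then iterate the affine recursion and sum the resulting geometric series. The paper compresses this into the single line $\eta_k=\Phi(\eta_{k-1})=\tilde\rho\eta_{k-1}+\tilde\epsilon_2$ (really an inequality, for the same reason you give) and then invokes \cref{lem:fixed_points} to get $\tilde\rho<1$; your write-up is simply more explicit about the domain bookkeeping, the location of $\eta_+$ relative to the pole, and the induction that threads monotonicity of $\Phi$ and $L$ together.
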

\begin{proof}
Denote $\eta_k=\phi_k(\eta_0)$, for each $k\geq 1$. From the definitions of $\Phi$ and $\phi_k$ in~\cref{eqn:discrete_map,eqn:discrete_dynamics}, respectively, we compute that
\begin{equation}\label{eqn:uniform_bound}
\eta_k=\Phi(\eta_{k-1}) = \tilde\rho\eta_{k-1}+\tilde\epsilon_2\, \qquad k\geq 1.
\end{equation}
By hypothesis,~\cref{lem:fixed_points} applies, so $\eta_k\rightarrow\eta_-$ monotonically as $k\rightarrow\infty$ and, consequently, $\tilde\rho<1$. Therefore, we iterate~\cref{eqn:uniform_bound} $k-1$ times to obtain
$$
\eta_k = \tilde\rho^{\,k}\eta_0+\tilde\epsilon_2\sum_{j=0}^{k-1}\tilde\rho^{\,j} \leq \tilde\rho^{\,k}\eta_0+\tilde\epsilon_2/(1-\tilde\rho).
$$
Plugging the original parameters back in to $\tilde\rho$ and $\tilde\epsilon_2$ establishes~\cref{eqn:perturbed_convergence}.
\end{proof}

Thus,~\cref{thm:perturbed_convergence} and~\cref{eqn:dynamics-convergence_link} demonstrate that the reduction of $\tan\theta_1(\mathcal{\hat S}_k,\mathcal{V})$ down to the order of $\eta_-$ is approximately geometric with rate close to $\rho$. So (accounting for the fact that the additive perturbation term is actually on the order of $u/d$ in the first iteration) it takes approximately $1+\log(\eta_-)/\log(\rho)$ steps for $\mathcal{\hat S}_k$ to converge to within order $u$ of $\mathcal{V}$, as measured by the tangent of the principal angle between the two subspaces.

\section{Non-normal matrices}\label{sec:non-normal_case}

We now consider the case of an $n\times n$ diagonalizable matrix $A$ whose eigenvectors are not orthogonal. Although a straightforward extension of~\cref{lem:X1_condition} shows that the condition number of $X_1$ still scales, generically, like $1/d$ (see~\cref{lem:X_1_condition-nn} below), the effect of a dangerous eigenvalue on subsequent iterates, $X_2,X_3,\ldots$, computed via~\cref{eqn:ratSI} is distinct in the non-normal case due to interactions among non-orthogonal modes. In fact, the condition numbers of the computed iterates do not improve during subsequent iterations unless approximate eigenvectors (i.e., from Ritz vectors) are incorporated into the subspace iteration (see~\cref{alg:FSI_w_RR}). Even with this modification, the condition numbers may remain large after one iteration when $d$ is very small (loosely, when $d\ll\sqrt{u}$), unlike the normal case. Here, we demonstrate that $\kappa(X_k)$ is typically reduced in step with the error in the Ritz vectors and that $\kappa(X_k)\approx(u/d)^k$ in the best case (i.e., when $|r(\lambda_{m+1})|/|r(\lambda_m)|\approx u$ and the Ritz vectors are well-conditioned at each iteration).

When $A$ does not have an orthogonal basis of eigenvectors (but is still diagonalizable), the orthogonal spectral projectors $v_iv_i^*$ that diagonalize the filter in~\cref{eqn:amplify_v1} are replaced by oblique spectral projectors, so that
\begin{equation}\label{eqn:v1w1_amplify}
r(A)x=\sum_{i=1}^n r(\lambda_i)\frac{w_i^*x}{w_i^*v_i}v_i=\frac{w_1^*x}{(de^{i\theta})(w_1^*v_1)}v_1+\mathcal{O}(1),\qquad\text{as}\qquad d\rightarrow 0.
\end{equation}
Here, $w_1,\ldots,w_n$ are the left eigenvectors of $A$, satisfying $w_i^*A=\lambda_iw_i^*$ with $\|w_i\|=1$ for $i=1,\ldots,n$. Likewise, the spectral decomposition in~\cref{eqn:eigendecomposition} is replaced by 
\begin{equation}\label{eqn:eigendecomposition2}
A=V_1\Lambda_1W_1^* + V_1\Lambda_2W_2^*,
\end{equation}
where the $i$th column of $W=[W_1\,\,W_2]$ is $(w_i^*v_i)^{-1}w_i$. With this normalization, $V$ and $W$ form a biorthogonal system, meaning that $W^*V=I$, $I$ being the $n\times n$ identity matrix. In the biorthogonal system, the dangerous eigenvalue amplifies the $w_1$ component in the input $x$ along the $v_1$ direction in the output $r(A)x$. Due to biorthogonality, $v_1$ and $w_1$ are parallel only when $v_1$ is orthogonal to $v_2,\ldots,v_n$.

\subsection{First iteration}\label{sec:nn-first_iteration}

To develop a sense of how non-normality impacts the conditioning of the iterates, it is worthwhile to revisit the analysis of $\kappa(X_1)$ in~\cref{lem:X1_condition} when $A$ is only diagonalizable. While the condition number of $X_1$ is still $\mathcal{O}(1/d)$ as $d\rightarrow 0$, the constants in the bound now depend on the structure of the left and right eigenvectors. This is because the stretching and shrinking actions of $A$ no longer belong solely to its eigenvalues, but can be enhanced or attenuated by interactions among non-orthogonal eigenvectors. We denote the smallest singular values of $V_1$ and $W_1$ by $\sigma_m(V_1)$ and $\sigma_m(W_1)$, respectively.

\begin{proposition}
\label{lem:X_1_condition-nn}
Let diagonalizable $A\in\mathbb{C}^{n\times n}$ and $r:\Lambda\rightarrow\mathbb{C}$ satisfy~\cref{eqn:eigendecomposition2} and~\cref{eqn:eig_index}, respectively, and given orthonormal $Q_0\in\mathbb{C}^{n\times m}$, let $X_1=r(A)Q_0$. If $U_1={\rm qf}(W_1)$ and $U_1^*Q_0$ has full rank, then the condition number of $X$ satisfies
\begin{equation}\label{eqn:X_1_condition-nn}
\frac{\|w_1^*Q_0\|/|w_1^*v_1|}{d\kappa(V)|r(\lambda_2)|}\lesssim \kappa(X_1)\leq\Big|\frac{r(\lambda_1)}{r(\lambda_m)}\Big|\frac{\kappa(V)\|(U_1^*Q_0)^{-1}\|}{\sigma_m(V_1)\sigma_m(W_1)}, \quad\text{as}\quad d\rightarrow 0.
\end{equation}
\end{proposition}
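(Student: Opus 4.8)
The plan is to transcribe the proof of \cref{lem:X1_condition} into the biorthogonal setting, replacing the unitary eigendecomposition by $r(A)=Vr(\Lambda)W^{*}$ and tracking how the non-unitarity of $V$ and $W$ enters the constants. The two algebraic facts that do the work, obtained from the relations $W_{1}^{*}V_{1}=I$ and $W_{1}^{*}V_{2}=0$ built into~\eqref{eqn:eigendecomposition2}, are the eigencoordinate identity $W_{1}^{*}X_{1}=r(\Lambda_{1})W_{1}^{*}Q_{0}$ and the projected identity $PX_{1}=V_{1}r(\Lambda_{1})W_{1}^{*}Q_{0}$, where $P=V_{1}W_{1}^{*}$ is the oblique spectral projector onto ${\rm span}(V_{1})$ along ${\rm span}(V_{2})$. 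A QR factorization $W_{1}=U_{1}G$ (so $U_{1}={\rm qf}(W_{1})$, and the singular values of the triangular factor $G$ are those of $W_{1}$) lets me convert $\|(W_{1}^{*}Q_{0})^{-1}\|\le\|(U_{1}^{*}Q_{0})^{-1}\|/\sigma_{m}(W_{1})$ and $\sigma_{m}(W_{1}^{*}Q_{0})\ge\sigma_{m}(W_{1})/\|(U_{1}^{*}Q_{0})^{-1}\|$, which is how $U_{1}$ and $\sigma_{m}(W_{1})$ appear in the statement.

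For the upper bound I would estimate the extreme singular values of $X_{1}$ separately. The top one is easy: $\sigma_{1}(X_{1})=\|r(A)Q_{0}\|\le\|V\|\,\|r(\Lambda)\|\,\|W\|=\kappa(V)|r(\lambda_{1})|$, using $\|r(\Lambda)\|=|r(\lambda_{1})|$ from~\eqref{eqn:eig_index}. For the bottom one I would project first, $\sigma_{m}(X_{1})\ge\sigma_{m}(PX_{1})/\|P\|$, then peel the target block off using that $V_{1}$ has full column rank and $r(\Lambda_{1})$ is square: $\sigma_{m}(PX_{1})=\sigma_{m}\!\big(V_{1}r(\Lambda_{1})W_{1}^{*}Q_{0}\big)\ge\sigma_{m}(V_{1})\,|r(\lambda_{m})|\,\sigma_{m}(W_{1}^{*}Q_{0})\ge\sigma_{m}(V_{1})\,|r(\lambda_{m})|\,\sigma_{m}(W_{1})/\|(U_{1}^{*}Q_{0})^{-1}\|$. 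Dividing the two estimates produces the claimed bound up to the factor $\|P\|=\|V_{1}W_{1}^{*}\|$, and \emph{closing the gap between $\|P\|\,\kappa(V)$ and a single $\kappa(V)$ is the part I expect to be hardest}: the naive $\|P\|\le\|V_{1}\|\|W_{1}\|$ overcounts, so I would instead carry the QR factorizations $V_{1}=\hat V_{1}G_{V}$, $W_{1}=U_{1}G_{W}$ simultaneously and feed them through $W_{1}^{*}V_{1}=I$, which forces $G_{W}^{*}(U_{1}^{*}\hat V_{1})G_{V}=I$ and thereby ties $\|G_{V}G_{W}^{*}\|$, $\|G_{V}^{-1}\|=\sigma_{m}(V_{1})^{-1}$, and $\|G_{W}^{-1}\|=\sigma_{m}(W_{1})^{-1}$ together; exploiting this coupling, rather than submultiplicativity, is what should leave only one $\kappa(V)$ in the numerator against the denominator $\sigma_{m}(V_{1})\sigma_{m}(W_{1})$.

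For the asymptotic lower bound I would copy the normal-case argument almost verbatim. Applying~\eqref{eqn:v1w1_amplify} columnwise gives $X_{1}=\frac{1}{(de^{i\theta})(w_{1}^{*}v_{1})}v_{1}(w_{1}^{*}Q_{0})+\mathcal{O}(1)$ as $d\to0$; since the full-rank hypothesis on $U_{1}^{*}Q_{0}$ forces $w_{1}^{*}Q_{0}\neq0$, the norm of this leading rank-one term yields $\sigma_{1}(X_{1})\gtrsim\|w_{1}^{*}Q_{0}\|/(d\,|w_{1}^{*}v_{1}|)$. For an upper bound on $\sigma_{m}(X_{1})\le\sigma_{2}(X_{1})$ I would split $X_{1}=N_{1}+N_{2}$ with $N_{1}=r(\lambda_{1})(w_{1}^{*}v_{1})^{-1}v_{1}w_{1}^{*}Q_{0}$ of rank at most one and $N_{2}=V\,{\rm diag}(0,r(\lambda_{2}),\dots,r(\lambda_{n}))\,W^{*}Q_{0}$, and invoke Weyl's inequality for singular values: $\sigma_{2}(X_{1})\le\sigma_{2}(N_{1})+\sigma_{1}(N_{2})=\sigma_{1}(N_{2})\le\kappa(V)|r(\lambda_{2})|$. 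Taking the ratio $\kappa(X_{1})\ge\sigma_{1}(X_{1})/\sigma_{2}(X_{1})$ then gives the stated lower bound, the extra $1/(\kappa(V)|r(\lambda_{2})|)$ coming exactly from this $\sigma_{2}$ estimate---the only change from the normal case being that $\|r(\Lambda)\|$ picks up the factor $\|V\|\|W\|=\kappa(V)$ once $V$ fails to be unitary.
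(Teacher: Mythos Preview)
Your asymptotic lower bound matches the paper's argument essentially verbatim: the rank-one leading term from~\cref{eqn:v1w1_amplify} gives the numerator, and the interlacing bound on $\sigma_2(X_1)$ via $N_2=V\,{\rm diag}(0,r(\lambda_2),\dots,r(\lambda_n))\,W^*Q_0$ produces the $\kappa(V)|r(\lambda_2)|$ in the denominator.

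For the upper bound you take a different route from the paper, and the gap you flag is real: projecting through $P=V_1W_1^*$ costs you a factor $\|P\|$, and the biorthogonality-coupling sketch you offer for removing it is not carried out and does not obviously collapse to a single $\kappa(V)$. The paper sidesteps this entirely by never introducing the oblique projector. Instead it QR-factorizes both eigenvector blocks, $V_1=\Omega_1R_1$ and $V_2=\Omega_2R_2$, writes
\[
X_1=\begin{bmatrix}\Omega_1 & \Omega_2\end{bmatrix}
\begin{bmatrix}R_1\,r(\Lambda_1)W_1^*Q_0\\ R_2\,r(\Lambda_2)W_2^*Q_0\end{bmatrix},
\]
and then invokes the same ``adding rows can only increase $\sigma_m$'' step used in the normal proof to obtain $1/\sigma_m(X_1)\le\|(R_1r(\Lambda_1)W_1^*Q_0)^{-1}\|$ directly. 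From there $\|R_1^{-1}\|=1/\sigma_m(V_1)$, and the QR factorization $W_1=U_1G$ (which you already use) supplies $1/\sigma_m(W_1)$ and $\|(U_1^*Q_0)^{-1}\|$. No $\|P\|$ ever enters, so the ``hardest part'' of your plan is simply absent in the paper's argument; the decomposition through ${\rm qf}(V_1)$ rather than through $V_1W_1^*$ is what buys the clean constant.
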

\begin{proof}
The steps of the proof are essentially identical to those in~\cref{lem:X1_condition} if~\cref{eqn:v1w1_amplify,eqn:eigendecomposition2} are used in place of~\cref{eqn:amplify_v1,eqn:eigendecomposition}, so we emphasize the adaptations made for non-orthogonal eigenvectors. For the largest singular value of $X_1$, we bound $\sigma_1(X_1)=\|r(A)Q_0\|\leq\kappa(V)|r(\lambda_1)|$, since $|r(\lambda_1)|\leq\|r(A)\|\leq\kappa(V)\|r(\Lambda)\|$ in the non-normal case. If we use~\cref{eqn:eigendecomposition2} to decompose $X_1$ as in~\cref{eqn:spec_decomp2}, the singular values of $r(A)W_1^*Q_0$ do not tell us directly about the singular values of $X_1$ because $V$ is not unitary. However, if $\Omega_1 R_1=V_1$ and $\Omega_2R_2=V_2$ are economy-sized QR factorizations, we can decompose
$$
r(A)Q_0=\begin{bmatrix} \Omega_1 & \Omega_2 \end{bmatrix}
\begin{bmatrix}
R_1r(\Lambda_1)W_1^*Q_0 \\ R_2r(\Lambda_2)W_2^*Q_0 
\end{bmatrix}.
$$
Since $\Omega_1$ and $\Omega_2$ have orthonormal columns, we apply the argument in the proof of~\cref{lem:X1_condition} to obtain the bound $1/\sigma_m(X_1)\leq\|(R_1r(\Lambda_1)W_1^*Q_0)^{-1}\|$. Now, $R_1$ has the same singular values as $V_1$ and $\|R_1^{-1}\|=1/\sigma_m(R_1)$, so we have that
\begin{equation}\label{eqn:intermediate_nn_bound}
\kappa(X_1)\leq\frac{|r(\lambda_1)|}{|r(\lambda_m)|}\frac{\kappa(V)\|(W_1^*Q_0)^{-1}\|}{\sigma_m(V_1)}.
\end{equation}
The upper bound in~\eqref{eqn:X_1_condition-nn} follows by substituting the QR decomposition $U_1S_1=W_1$ into~\cref{eqn:intermediate_nn_bound} and noting that $\|S_1^{-1}\|=1/\sigma_m(W_1)$.

A lower bound on $\sigma_1(X_1)$ follows directly from~\cref{eqn:v1w1_amplify}, analogous to~\cref{eqn:X1_asymptotics}. For the lower bound on $1/\sigma_m(X_1)$, we can use~\cref{eqn:v1w1_amplify} to write $X_1$ as a rank one perturbation of the matrix
$$
\tilde N_2 = V{\rm diag}(0,\lambda_2,\ldots,\lambda_n)W^*Q_0.
$$
We have that $\sigma_1(N_2)\leq\|V\|\|W^*\||r(\lambda_2)|=\kappa(V)|r(\lambda_2)|$, where the equality is due to biorthogonality, which implies that $W^*=V^{-1}$. By interlacing, we find that $1/\sigma_m(X_1)\geq 1/(\kappa(V)|r(\lambda_2)|)$, establishing the asymptotic lower bound in~\cref{eqn:X_1_condition-nn}.
\end{proof}

When $A$ is normal,~\cref{lem:X_1_condition-nn} reduces to~\cref{lem:X1_condition}. In the non-normal case, ill-conditioning in the eigenvectors, reflected in $\kappa(V)$, widens the interval between the upper and lower bounds. Similarly, ill-conditioning in the target eigenvectors, captured by the smallest singular values of $V_1$ and $W_1$ (since the columns of both matrices have unit norm), may further widen the gap. On the other hand, the dangerous eigenvalue itself is ill-conditioned when $|w_1^*v_1|$ is small.\footnote{With $\|v_i\|=\|w_i\|=1$, the quantity $|w_i^*v_i|^{-1}$ is Wilkinson's condition number for $\lambda_i$, measuring the first-order sensitivity of the eigenvalue to infinitesimal perturbations in $A$~\cite[pp.~88--89]{wilkinson1965algebraic}.} The left-hand side of~\cref{eqn:X_1_condition-nn} illustrates how this may enhance the amplifying effects of the dangerous eigenvalue, increasing the asymptotic lower bound to $d|w_1^*v_1|^{-1}$. Broadly speaking, the widening gap between upper and lower bounds indicates that our picture is blurred in the non-normal case because the structure of the eigenvectors plays a key role. The extent of the damage may depend on where the ill-conditioning in $V$ is concentrated.

\subsection{Iterating with orthonormal bases}\label{sec:nn-ONB}

\begin{figure}[!tbp]
  \centering
  \begin{minipage}[b]{0.48\textwidth}
    \begin{overpic}[width=\textwidth]{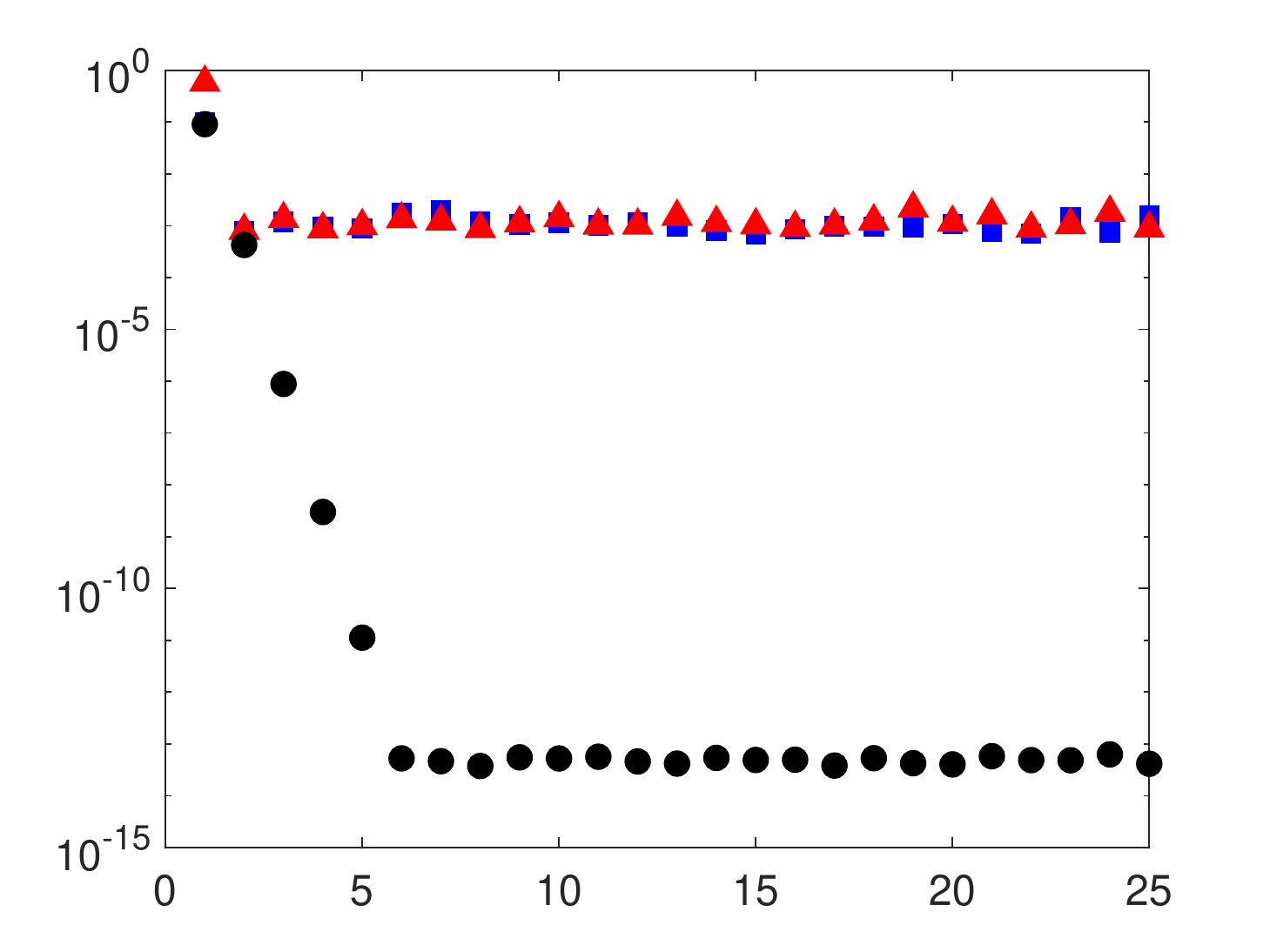}
   	 \put (30,74) {$\displaystyle \max_i\|A\hat v_i-\hat \lambda_i\hat v_i\|$}
     \put (50,-2) {$\displaystyle k$}
     \end{overpic}
  \end{minipage}
  \hfill
  \begin{minipage}[b]{0.48\textwidth}
    \begin{overpic}[width=\textwidth]{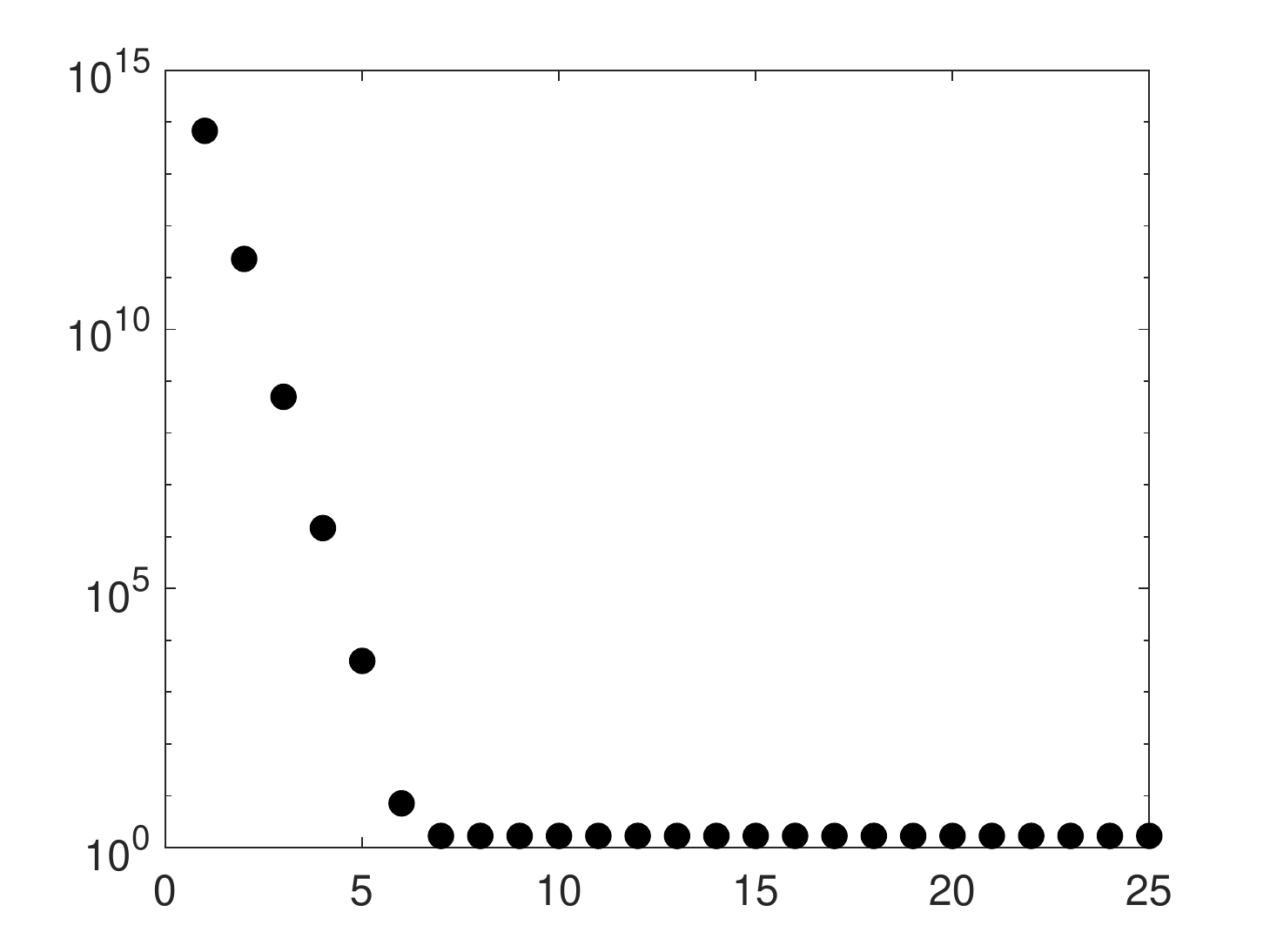}
    \put (41,73) {$\displaystyle \kappa(\hat Z_kD_k)$}
     \put (50,-2) {$\displaystyle k$}
     \end{overpic}
  \end{minipage}
  \caption{
\textit{Dangerous eigenvalues of a non-normal matrix.} The eigenvalues and rational filter are identical to the setup displayed in~\cref{fig:exp1_setup}, however, this matrix has non-orthogonal eigenvectors and the dangerous eigenvalue has been moved to distance $d=10^{-13}$ from the pole at $z=10$. On the left, the maximum residual of $10$ target eigenpairs after each iteration of~\cref{eqn:ratSI} (blue squares), a variant of subspace iteration based on Schur vectors~\cite[ch.~5.2]{saad2011numerical} (red triangles), and a variant based on approximate eigenvectors, described in~\cref{alg:FSI_w_RR} (black circles). On the right, the condition number of the iterates $\hat Z_kD_k$ ($D_k$ scales the columns of $\hat Z_k$ to have unit norm) decreases in step with residuals from~\cref{alg:FSI_w_RR}, at a rate of about $u/d$ per iteration.
\label{fig:exp3_results}}
\end{figure}

Now that we understand the interaction between non-normality and dangerous eigenvalues in the initial iteration, we are ready to examine subsequent iterations. As in~\cref{sec:twice_is_enough}, we focus on the coordinates of $Q_1$ in the eigenvector basis, partitioned into blocks as
\begin{equation}\label{eqn:Q1_structure_nn}
W_1^*Q_1 = \begin{bmatrix}
w_1^*q_1^{(1)} & w_1^*\tilde Q_1 \\
\tilde W_1^*q_1^{(1)} & \tilde W_1^*\tilde Q_1
\end{bmatrix}.
\end{equation}
The critical observation about~\cref{eqn:Q1_structure_nn} is that, in contrast to the normal case, the upper right block is not small (the lower-left block remains small). Although the columns of $\tilde Q_1$ are still nearly orthogonal to $v_1$, the eigenvectors $v_1$ and $w_1$ are only parallel in the special case that $v_1$ is orthogonal to $v_2,\ldots,v_n$. Consequently, $w_1^*\tilde Q_1$ is typically $\mathcal{O}(1)$ and, when we compute $X_2=r(A)Q_1$, the components in each column of $Q_1$ in the $w_1$ direction will be amplified according to~\cref{eqn:v1w1_amplify}. Each column of $X_2$ will be dominated by $v_1$ at magnitude $\mathcal{O}(1/d)$ and $X_2$ is just as ill-conditioned as $X_1$ in the first iteration. This line of thinking seems to indicate that, when $r(A)$ is repeatedly applied to an orthonormal basis, subspace iteration for non-normal matrices must stagnate at an accuracy of $\approx u/d$ due to ill-conditioning in the iterates $X_1,X_2,\ldots$.

To illustrate, we return to the experimental setup illustrated in~\eqref{fig:exp1_setup}. We select the same rational filter and a matrix with the same eigenvalues, but now the eigenvector matrix is not orthogonal. The condition number of the eigenvector matrix is $\approx 10^{2}$, but the target eigenvectors themselves are not far from orthogonal. \Cref{fig:exp3_results} shows the maximum residual of the computed target eigenpairs after each of the first $10$ iterations of~\cref{eqn:ratSI}. We also compare with a modified subspace iteration based on Schur vectors that is commonly used to compute eigenvalues of non-normal matrices~\cite[ch.~5.2]{saad2011numerical}. Both iterations apply the rational filter directly to an orthonormal basis for the search space and the residuals stagnate near $u/d$ in both cases.

\subsection{Iterating with approximate eigenvectors}\label{sec:nn-ritz_vec_rescue}

What can we do to improve the conditioning of the iterates and the accuracy in the target eigenpairs? Consider another common variant of subspace iteration shown in~\cref{alg:FSI_w_RR}, which forms the iterates $Z_1,Z_2,\ldots$ by applying $r(A)$ to approximate eigenvectors constructed from the Ritz vectors at each iteration. Let us partition $W_1^*Y_1$ in the usual way,
\begin{equation}\label{eqn:Y1_structure_nn}
W_1^*Y_1 = \begin{bmatrix}
w_1^*y_1^{(1)} & w_1^*\tilde Y_1 \\
\tilde W_1^*y_1^{(1)} & \tilde W_1^*\tilde Y_1
\end{bmatrix}
=\begin{bmatrix}
e & f \\
g & H
\end{bmatrix},
\end{equation}
where $\tilde W_1$ and $\tilde Y_1$ denote the last $m-1$ columns of $W_1$ and $Y_1$, respectively. Now, because the left and right eigenvectors are biorthogonal, $w_1^*$ annihilates the remaining target eigenvectors $v_2,\ldots,v_m$, so the upper right block $f$ in~\cref{eqn:Y1_structure_nn} is small when the columns of $Y_1$ are a good approximation to the target eigenvectors. In turn, small $\|f\|$ mitigates the amplification of $v_1$ in the last $m-1$ columns of $Z_2$. 

\begin{algorithm}[t]
\textbf{Input:} Given $A\in\mathbb{C}^{n\times n}$, $r:\Lambda\rightarrow\mathbb{C}$, and $Y_0\in\mathbb{C}^{n\times m}$.  \\
\vspace{-4mm}
\begin{algorithmic}[1]
\FOR{$k=1,2,\ldots$}
	\STATE Apply the filter $Z_k=r(A)Y_{k-1}$.
	\STATE Compute orthonormal basis $Q_k={\rm qf}(Z_k)$.
	\STATE Form $A_k=Q_k^*AQ_k$ and diagonalize $A_k=U_k\Theta_kU_k^{-1}$.
	\STATE Set $Y_k=Q_kU_k$.
\ENDFOR
\end{algorithmic} \textbf{Output:} Approximate eigenvalue matrix $\Theta_k$ and eigenvector matrix $Y_k$.
\caption{Filtered subspace iteration with Rayleigh--Ritz projection.}\label{alg:FSI_w_RR}
\end{algorithm}

Unfortunately, the behavior of approximate eigenvectors computed with~\eqref{alg:FSI_w_RR} may vary widely for general non-normal matrices. In exact arithmetic, their accuracy will depend on the rational filter through the eigenvalues of $r(A)$ and on interactions among non-orthogonal eigenvectors. This can delay convergence and may lead to instability on a computer. In floating-point arithmetic, it is further limited by the accuracy in the computed orthonormal basis and Ritz vectors. Despite these difficulties, we can glean some practical insight into a distinct feature of the non-normal setting by examining a ``best-case" situation.

Let us suppose that the non-normal effects are relatively mild, that $r(\cdot)$ filters out the unwanted eigenvalues to unit round-off or better (as in~\cref{fig:exp1_setup}), and that the Ritz vectors are computed accurately at each iteration. In this regime, the accuracy of the approximate eigenvectors $Y_1$ is limited mainly by the accuracy in the computed orthonormal basis, $\hat Q_1$, and we can focus on the influence of the dangerous eigenvalue in the second iteration (and beyond). From our analysis of the first iteration in~\cref{sec:nn-first_iteration}, we expect that $\|\hat Q_1-Q_1\|\approx u/d$ and, therefore, (by our assumptions on the filter and the Ritz vectors) that $\|\hat Y_1 - V_1\|\approx u/d$.

Interestingly, the order of magnitude of block $f$ in~\eqref{eqn:Y1_structure_nn} is distinctly different from the analogous block $b$ in the normal case. Instead of the perfect balancing between $b$ and $r(\lambda_1)$ when the filter is applied (leading to perfectly well-conditioned columns of $X_2$), we have the order-of-magnitude estimate $\|f\||r(\lambda_1)|\approx u/d^2$. In other words, $v_1$ may still dominate each column of $Z_2$ when $d\ll\sqrt{u}$, but the gap in magnitude between the $v_1$ component and the remaining target components in the last $m-1$ columns is reduced by a factor of $u/d$ at the second iteration.~\Cref{fig:exp3_results} illustrates this phenomenon in action with the same matrix and rational filter used for the experiments in~\cref{sec:nn-ONB}. The residuals in the target eigenpairs decrease geometrically with rate $u/d$ (left panel), mirroring the reduction in the condition number of the iterates $Z_k$ (after scaling columns to have unit norm, right panel).

Thus, for a mildly non-normal matrix with a dangerous eigenvalue at distance $d\ll\sqrt{u}$ from a pole of $r(\cdot)$, two iterations are not usually enough to remove the adverse influence of the dangerous eigenvalue. Instead, the target residuals and the errors in the computed orthonormal basis are often refined in step down to the unit round-off (depending on the sensitivity of the target eigenpairs). As in the normal case, round-off errors caused by the dangerous eigenvalue may even go unnoticed when the rational filter is mediocre so that the noise in the unwanted directions is dominated by poor filtering.

\section{Restarting Arnoldi}\label{sec:rat_Krylov}

Now that we understand the right-hand side of~\cref{fig:arnoldi_fsi}, let us examine the stagnation of Arnoldi with shift-and-invert enhancement illustrated in the left-hand panel of the same figure. Unlike subspace iteration, which applies $r(A)$ iteratively to a subspace of fixed dimension, Arnoldi refines the subspace by expanding it.
Given an initial unit vector $q_1\in\mathbb{C}^n$, shift-and-invert Arnoldi computes the iterates
\begin{equation}\label{eqn:arnoldi}
y_k=s(A)q_{k-1}, \qquad q_k={\rm mgsr}(y_k\,;\,q_1,\ldots,q_{k-1}),
\end{equation}
with the expression ${\rm mgrs}(\cdot)$ indicating that $y_k$ is orthogonalized against $q_1,\ldots,q_{k-1}$ using modified Gram--Schmidt with full reorthogonalization~\cite[pp.~307--308]{stewart2001matrix}.

After $k$ steps of~\cref{eqn:arnoldi}, we have an $n\times k$ orthonormal basis $Q_k=[q_1\,\cdots\,q_k]$ and we can approximate eigenpairs of $A$ in one of two ways:
\begin{itemize}[leftmargin=*,noitemsep]
\item Directly from the eigenpairs of the upper Hessenberg matrix $H_k$ generated from the weights calculated during modified Gram--Schmidt~\cite[p.253]{trefethen1997numerical}. 
\item A Rayleigh--Ritz step by computing eigenpairs of $A_k=Q_k^*AQ_k$. 
\end{itemize}
Usually, the upper Hessenberg matrix is the method of choice because it does not require any additional matrix-vector products. However, when a dangerous eigenvalue is present, the upper Hessenberg matrix in the Arnoldi decomposition of $s(A)$ typically has norm $\|H_k\|=\mathcal{O}(d^{-1})$: this makes the accurate calculation of the remaining target eigenvalues challenging for standard dense solvers. To focus on the accuracy in the computed basis $Q_k$, we work with $A_k$, but we revisit $H_k$ at the end of this section.

In keeping with the analysis in~\cref{sec:dang_eigvals,sec:twice_is_enough}, we can understand the accuracy in the computed orthonormal basis $\hat Q_k$ through the conditioning of the matrix
\begin{equation}
Y_k = \begin{bmatrix}
q_1 & \cdots & q_{k-1} & y_k
\end{bmatrix}, \qquad k=2,3,4,\ldots.
\end{equation}
The matrix $Q_k$ from the Arnoldi iterations is precisely the QR factorization of $Y_k$ obtained by orthogonalizing $y_k$ against the previous $(k-1)$ columns, which are already an orthonormal set. If $y_k$ is not too closely aligned with ${\rm span}(q_1,\ldots,q_{k-1})$, then the matrix $Y_k$ is well-conditioned, at least after a simple column scaling. Consequently, $Q_k={\rm qf}(Y_k)$ is not too sensitive to perturbations caused by round-off in $Y_k$, as discussed in~\cref{sec:sensitive_ONB}. However, if $y_k$ is closely aligned with any of the previous columns, the smallest singular value of $Y_k$ will be close to zero and $Q_k$ will be very sensitive to round-off in $Y_k$.

This perspective provides an explanation for the stagnation observed in~\cref{fig:arnoldi_fsi}. When $q_1$ is chosen randomly, $v_1^*q_1$ is generically $\mathcal{O}(1)$ (as $d\rightarrow 0$). After applying the shift-and-invert filter, we calculate (as usual) that
$$
y_2=s(A)q_1=\frac{v_1^*q_1}{de^{i\theta}}v_1 + \mathcal{O}(1).
$$
After we orthogonalize $y_2$ against $q_1$ to compute $q_2$, then for some constant $h_2$, we have
\begin{equation}\label{eqn:keyStag_Arnoldi}
q_2=h_2\frac{v_1^*q_1}{de^{i\theta}}(v_1-(v_1^*q_1)q_1) + \mathcal{O}(1).
\end{equation}
In other words, $q_2$ may not be dominated by $v_1$, but ${\rm span}(q_1,q_2)$ contains approximations to $v_1$ that are accurate to $\mathcal{O}(d)$. 

Now, note that $q_2$ is not near orthogonal to $v_1$ unless $q_1$ happens to be very closely aligned with $v_1$. This means that the subsequent iterate $y_3$ is also aligned with $v_1$, and therefore with a vector in ${\rm span}(q_1,q_2)$, to about order $d$. Consequently, the matrix $Y_3$ is ill conditioned and we expect that $Q_3$, and in particular $q_3$, can only be accurate to about order $u/d$ when computed in floating-point precision. Moreover, $q_3$ is not dominated by $v_1$ and this process repeats, so that each iterate $y_k$ is closely aligned with $v_1$ in ${\rm span}(q_1,q_2)$, leading to errors in $q_k$ on the order of $u/d$.

\begin{figure}[!tbp]
  \centering
  \begin{minipage}[b]{0.48\textwidth}
    \begin{overpic}[width=\textwidth]{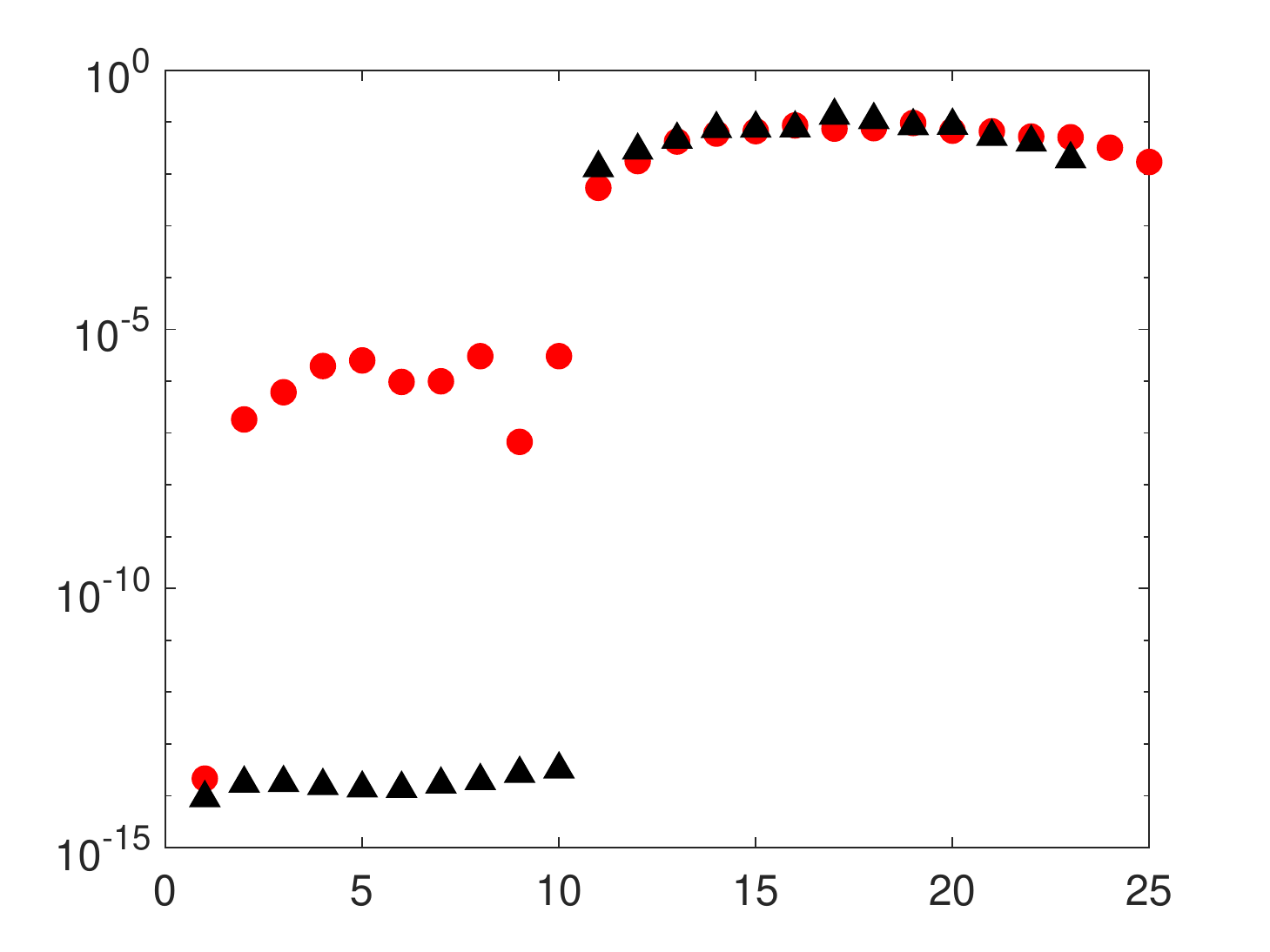}
   	 \put (34,73) {$\displaystyle \|A\hat v_i-\hat \lambda_i\hat v_i\|$}
     \put (50,-2) {$\displaystyle i$}
     \end{overpic}
  \end{minipage}
  \hfill
  \begin{minipage}[b]{0.48\textwidth}
    \begin{overpic}[width=\textwidth]{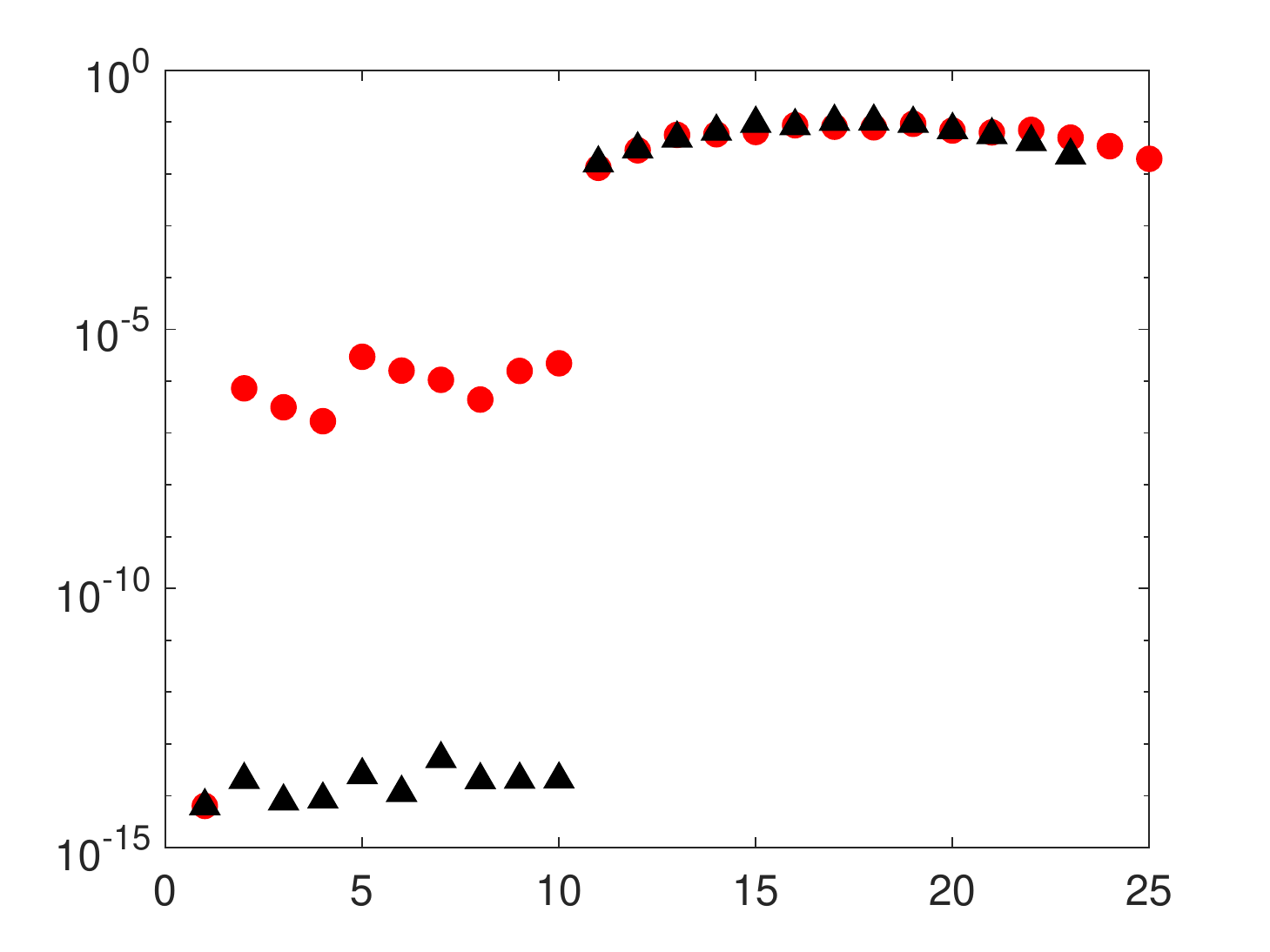}
   	 \put (34,73) {$\displaystyle \|A\hat v_i-\hat \lambda_i\hat v_i\|$}
     \put (50,-2) {$\displaystyle i$}
     \end{overpic}
  \end{minipage}
  \caption{After restarting Arnoldi with Ritz vectors that are nearly orthogonal to the dangerous direction, Arnoldi produces approximations to the target eigenpairs with accuracy near the unit round-off. Both plots compare eigenpair residuals after $25$ steps of shift-and-invert Arnoldi with no restart (red circles) to eigenpair residuals obtained after $25$ total steps of shift-and-invert Arnoldi with the Ritz restart. The eigenpairs were extracted from $Q^*_{25}AQ_{25}$ in the left panel and from the Hessenberg matrix $H_{25}$ in the right panel.
\label{fig:exp4_results}}
\end{figure}

In our discussion above, note that $y_3$ was only aligned with $v_1$, and thus close to ${\rm span}(q_1,q_2)$, because $q_2$ was not nearly orthogonal to $v_1$. Unlike in subspace iteration, the dangerous direction is never rendered harmless by orthogonalizing directly against it! The geometric picture of the iterates $y_2,y_3,y_4,\ldots$ being attracted to $v_1$ as a result of $q_2,q_3,q_4,\ldots$ not being sufficiently orthogonal to $v_1$ suggests an interesting fix. If we restart the Arnoldi iteration with the Ritz approximation associated to $v_1$ after the second iteration, the picture changes drastically. Again, $y_2$ is aligned with $v_1$, but now it is orthogonalized against $q_1=v_1+\mathcal{O}(d)$. The corresponding $q_2$ may not be particularly accurate, but this doesn't matter much: the point is that all subsequent iterates are orthogonalized against the dangerous direction (via $q_1$) up to order $\mathcal{O}(d)$. Analogous to the situation encountered in subspace iteration, the iterates $y_3,y_4,y_5,\ldots,$ are no longer dominated by $v_1$ and, consequently, $q_3,q_4,q_5,\ldots$ can be computed accurately. In a sense, we are tricking Arnoldi into running in the orthogonal complement of the dangerous direction.

\Cref{fig:exp4_results} demonstrates this restart strategy in action. As we saw earlier, $25$ iterations of shift-and-invert Arnoldi leads to stagnation in $9$ of the $10$ target eigenpairs. However, we can resolve all $10$ target eigenpairs to unit round-off accuracy in $25$ iterations if we restart with the Ritz vector corresponding to the dangerous direction after the second iteration. The right Ritz vector is easy to identify: it is most closely aligned with the second iteration $y_2$. It is worth noting that the Ritz restart strategy seems to be equally successful when eigenpairs are extracted from the Hessenberg matrix $H_k$ instead of $Q_k^*AQ_k$ (see the right panel in~\cref{fig:exp4_results}).

\section{Multiple dangerous eigenvalues}\label{sec:num_exp}

For simplicity, our analysis has focused on the case where there is just one dangerous eigenvalue. However, other situations may arise more naturally in practice. When eigenvalues are heavily clustered, many dangerous eigenvalues may surround a single pole at various distances. The main message of our results carries over to these cases. To illustrate this, we generate a $200\times 200$ symmetric matrix with 15 target eigenvalues in $[10,15]$, and employ a filter with equally-spaced poles on a circular contour centered at $12.5$. There are two dangerous eigenvalues at $10+10^{-13}$, and the other 13 target eigenvalues are clustered exponentially at the pole, taking the values $10+10^{-i},i=0,1,2,\ldots,12$. Thus, there are two dangerous eigenvalues, along with many less harmful but still dangerous eigenvalues.~\Cref{fig:mult} shows the results with two rational filters: one excellent and one of medium quality. Just as in \cref{sec:dang_eigvals,sec:twice_is_enough}, we see that twice is enough if the filter quality is high; with a poorer filter, the iterates beyond the second behaves as if there was no dangerous eigenvalue (also, see the right panels in~\cref{fig:arnoldi_fsi,fig:contour_refine}). 
\begin{figure}[!tbp]
  \centering
  \begin{minipage}[b]{0.48\textwidth}
    \begin{overpic}[width=\textwidth]{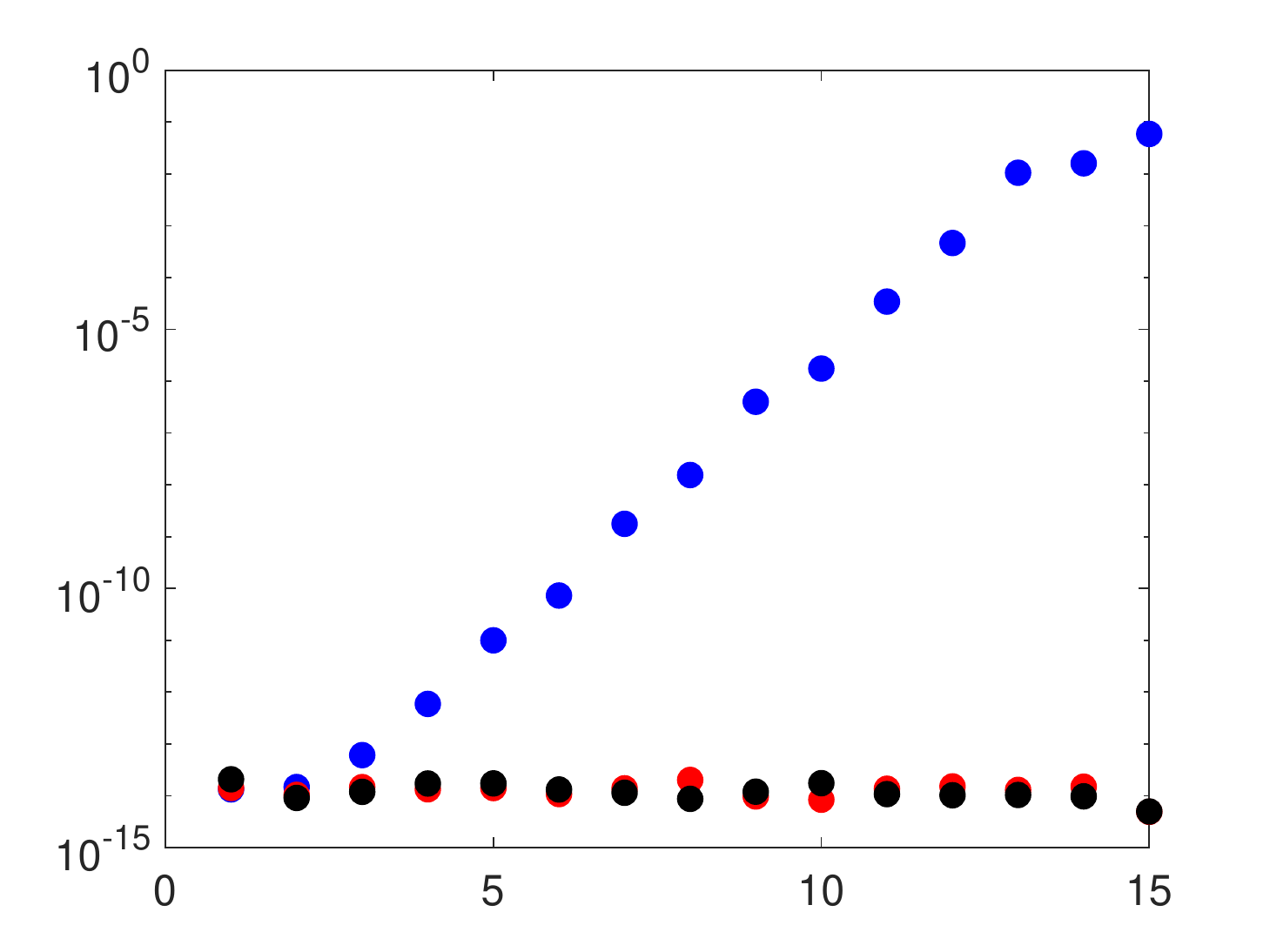}
     \put (34,73) {$\displaystyle \|A\hat v_i-\hat \lambda_i\hat v_i\|$}
     \put (92,8) {$\displaystyle k=3$}
   	 \put (92,13) {$\displaystyle k=2$}
   	 \put (92,63) {$\displaystyle k=1$}
     \put (50,-2) {$\displaystyle i$}
     \end{overpic}
  \end{minipage}
  \hfill
  \begin{minipage}[b]{0.48\textwidth}
    \begin{overpic}[width=\textwidth]{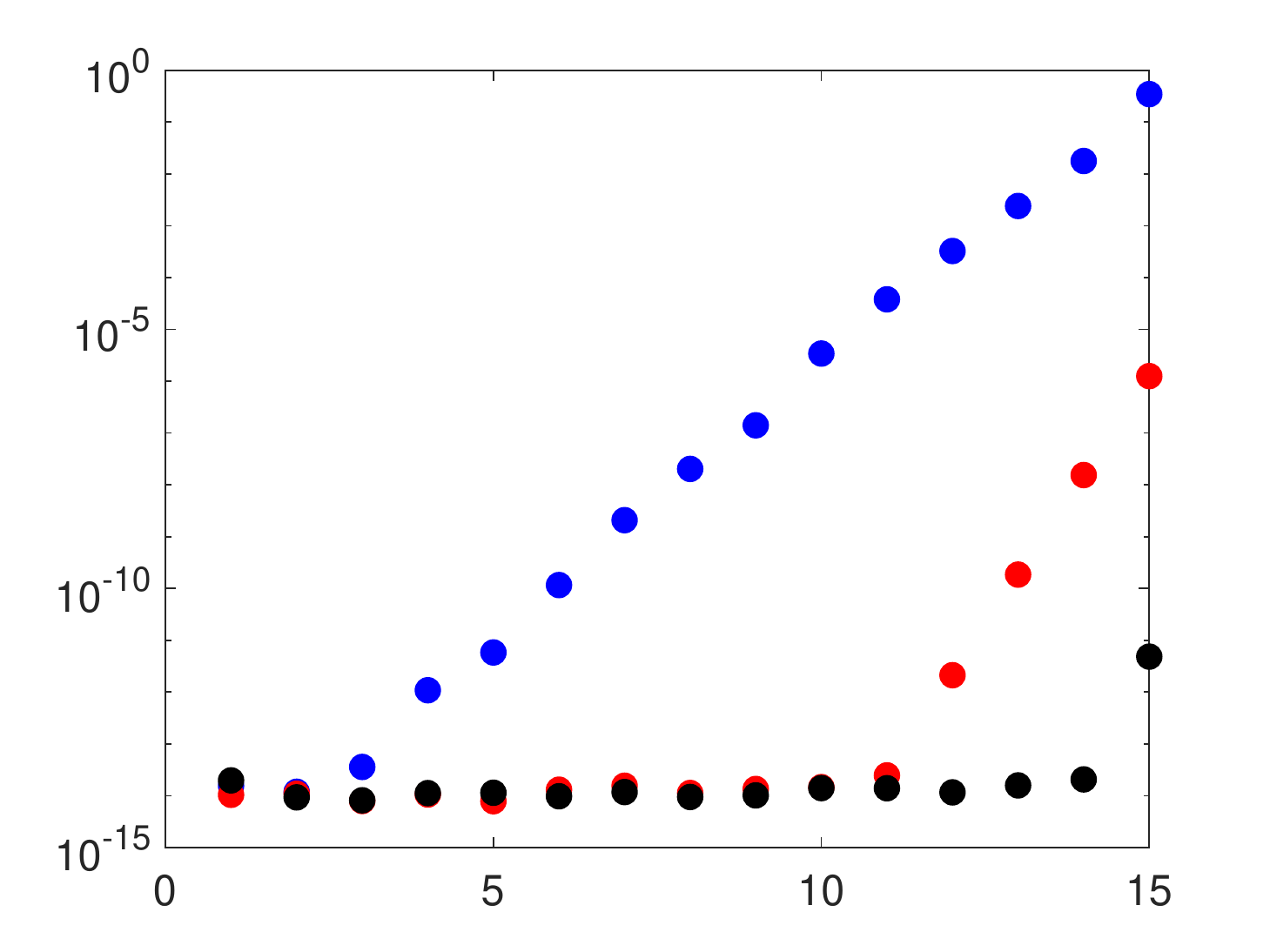}
     \put (34,73) {$\displaystyle \|A\hat v_i-\hat \lambda_i\hat v_i\|$}
     \put (92,20) {$\displaystyle k=3$}
   	 \put (92,42) {$\displaystyle k=2$}
   	 \put (92,65) {$\displaystyle k=1$}
     \put (50,-2) {$\displaystyle i$}
     \end{overpic}
  \end{minipage}
  \caption{Convergence with multiple dangerous eigenvalues. On the left, two iterations of rational subspace iteration with a high-quality filter ($\ell=32$ poles) reduce the residuals of $15$ target eigenpairs to the order of $u$, despite exponential clustering of the target eigenvalues at a pole. On the right, three iterations of rational subspace iteration with a medium-quality filter ($\ell=8$) reduce the residuals of $15$ target eigenpairs geometrically (see~\cref{thm:perturbed_convergence}), with no observable interference from the exponentially clustered eigenvalues.
  \label{fig:mult}}
\end{figure}

\section*{Conclusions}
Subspace and Arnoldi iterations can be extremely efficient and flexible tools for computing a few target eigenpairs when accelerated with a rational filter, but one must be cautious about eigenvalues near the poles. The damage incurred by such dangerous eigenvalues is confined to the first iteration of subspace iteration: subsequent iterations self-correct and the eigenpairs are computed to machine precision as orthogonalization effectively deflates the dangerous direction. If the matrix is real-symmetric or, more generally, normal, then the influence of the dangerous eigenvalue is corrected in just two iterations. For matrices whose eigenvectors are not orthogonal (or very near to orthogonal), self-correction occurs geometrically over a series of iterations at a rate of roughly $u/d$ in the best case (it is possible that non-normal effects cause instability in the worst case). For Arnoldi and similar Krylov schemes, we recommend restarting the iteration with the Ritz approximation to the dangerous eigenvector in order to resolve all target eigenpairs to full precision.

\section*{Acknowledgements}
We would like to thank Alex Townsend for encouraging us to investigate the stability of contour integral eigensolvers when an eigenvalue is near a quadrature node, as well as for his careful reading of an early draft.

\bibliography{draft}
\bibliographystyle{plain}
\end{document}